\newtheorem{thm}{Theorem}[section]
\newtheorem{lemma}[thm]{Lemma}
\newtheorem{prop}[thm]{Proposition}
\newtheorem{remark}[thm]{Remark}
\numberwithin{equation}{section}
\renewcommand{\Re}{{\mathfrak{Re}}}
\newcommand{\authorfootnotes}{\renewcommand\thefootnote{\@fnsymbol\c@footnote}}%
\begin{document}

\def\q{\mathfrak{q}}
\def\p{\mathfrak{p}}
\def\u{\mathfrak{u}}
\def\a{\mathfrak{a}}
\def\b{\mathfrak{b}}
\def\m{\mathfrak{m}}
\def\n{\mathfrak{n}}
\def\c{\mathfrak{c}}
\def\d{\mathfrak{d}}
\def\e{\mathfrak{e}}
\def\k{\mathfrak{k}}
\def\z{\mathfrak{z}}
\def\h{{\mathfrak h}}
\def\gl{\mathfrak{gl}}
\def\sl{\mathfrak{sl}}
\def\t{\mathfrak{t}}

\def\Ext{{\rm Ext}}
\def\Hom{{\rm Hom}}
\def\Ind{{\rm Ind}}

\def\res{\mathop{Res}}

\def\GL{{\rm GL}}
\def\SL{{\rm SL}}
\def\SO{{\rm SO}}
\def\O{{\rm O}}

\def\R{\mathbb{R}}
\def\C{\mathbb{C}}
\def\Z{\mathbb{Z}}
\def\Q{\mathbb{Q}}
\def\A{\mathbb{A}}
\def\bk{k}

\def\w{\wedge}

\def\Cat{\mathcal{C}}
\def\HC{{\rm HC}}
\def\HCat{\Cat^\HC}
\def\proj{{\rm proj}}

\def\to{\rightarrow}
\def\To{\longrightarrow}

\def\1{1\!\!1}
\def\dim{{\rm dim}}

\def\th{^{\rm th}}
\def\isom{\approx}

\def\CE{\mathcal{C}\mathcal{E}}
\def\E{\mathcal{E}}

\def\dis{\displaystyle}
\def\f{{\bf f}}                 
\def\g{{\bf g}}
\def\T{{\rm T}}              
\def\omegatil{\tilde{\omega}}  
\def\H{\mathcal{H}}            
\def\W{W^{\circ}}           
\def\Whit{\mathcal{W}}      
\def\ringO{\mathcal{O}}     
\def\S{\mathcal{S}}      
\def\M{\mathcal{M}}      
\def\K{{\rm K}}          
\def\h{\mathfrak{h}} 
\def\N{\mathfrak{N}}    
\def\norm{{\rm N}}       
\def\trace{{\rm Tr}} 
\def\ctilde{\tilde{C}}

\def\Sym{{\rm Sym}} 
\def\Ad{{\rm Ad}} 
\def\Gal{{\rm Gal}} 

\def\ord{{\rm ord}}

\title{Low-Lying Zeros of $L$-functions of Ad\'elic Hilbert Modular Forms and their Convolutions}

\author{Alia Hamieh}
\address[Alia Hamieh]{University of Northern British Columbia\\ Department of Mathematics and Statistics\\ Prince George, BC V2N 4Z9 Canada}
\email{alia.hamieh@unbc.ca}

\author{Peng-Jie Wong}
\address[Peng-Jie Wong]{National Sun Yat-Sen University\\Department of Applied Mathematics\\
Kaohsiung City, Taiwan}
\email{pjwong@math.nsysu.edu.tw}


\keywords{Hilbert modular forms, Rankin-Selberg convolutions, 1-level density, automorphic $L$-functions, central values}

\subjclass[2010]{Primary 11F41, 11F67; secondary 11F30, 11F11, 11F12, 11N75}
\date{\today}

\begin{abstract}
In this article, we study the density conjecture of Katz and Sarnak for $L$-functions of ad\'elic Hilbert modular forms and their convolutions. In particular, under the generalised Riemann hypothesis, we establish several instances supporting the conjecture and extending the works of Iwaniec-Luo-Sarnak and many others. For applications, we obtain an upper bound for the average order of $L$-functions of Hilbert modular forms at $s=\frac{1}{2}$ as well as a positive proportion of non-vanishing of certain Rankin-Selberg $L$-functions.
 \end{abstract}

\maketitle

\section{Introduction}

Since the seminal discovery by Montgomery and Dyson, random matrices have been playing an essential role in helping number theorists understand the behaviour of zeros
of $L$-functions. Profoundly, a conjecture of Katz and Sarnak \cite{katz-sarnak}, which is often called the density conjecture, predicts that the distribution of ``low-lying'' zeros of $L$-functions in a family $\mathfrak{F}$ is determined by the ``symmetry type'' of $\mathfrak{F}$. (We refer the reader to  \cite[Sec. 1.1]{liu-miller} for a precise formulation for the density conjecture.)

Several results tested and supported the conjecture for various families, including $L$-functions of modular forms and their symmetric lifts, under the generalised Riemann hypothesis (GRH). For instance, in their monumental article \cite{Iwaniec-Luo-Sarnak}, Iwaniec, Luo, and Sarnak studied the above-mentioned density conjecture of Katz and Sarnak for holomorphic modular forms and their symmetric squares. Nowadays, their work has a variety of extensions. For example, the condition on square-free levels was removed recently in \cite{miller-arbitrarylevel}. Also, the cases of Hilbert modular forms (over totally real number fields of narrow class number one) and automorphic forms on totally definite quaternion algebras (including both  Maass forms and Hilbert modular forms over general totally real number fields) have been established by Liu-Miller \cite{liu-miller} and Lesesvre \cite{Lesesvre-QA}, receptively. Recently, extending the previous work of  Ricotta and Royer \cite{Ricotta-Royer} on low-lying zeros of symmetric power $L$-functions of holomorphic modular forms, in the level aspect, Sugiyama \cite{Sugiyama} further verified the density conjecture, weighted by symmetric square $L$-values, for symmetric power $L$-functions of Hilbert modular forms (under some mild conditions on underlying number fields). More generally, Shin and Templier \cite{shin-templier} proved the density conjecture for a wide family of automorphic representations. In addition, Shashkov \cite{shashkov} recently proved several results that support the density conjecture for families of Rankin-Selberg $L$-functions of holomorphic modular forms over $\Bbb{Q}$.

Recall that the 1-level density of an $L$-function $L(s, f)$, as formalised in \cite[Ch. 5]{IK} (see also Section \ref{AF} below), is defined by 
$$
D(f;\phi) =\sum_{\gamma_{f}} \phi \left( \frac{\gamma_{f}}{2\pi} \log c_{f}\right),
$$
where $\phi(x)$ is an even Schwartz function, the sum is over the zeros $\frac{1}{2} +i\gamma_{f}$ of $f$, and $c_f$ stands for the analytic conductor of $f$.
Our first result in this paper generalises one of the theorems of Iwaniec, Luo, and Sarnak \cite{Iwaniec-Luo-Sarnak} (as well as the above-mentioned result of Liu-Miller \cite{liu-miller}) to holomorphic Hilbert modular forms over \emph{arbitrary} totally real number fields.

\begin{thm}\label{thm:main1}
Let $F$ be a totally real number field of degree $n$. Let $k=(k_1,\dots, k_n)\in 2\mathbb{N}^{n}$ and $\n$ denote a square-free integral ideal of $F$. We denote by $\Pi_{k}(\n)$ the set of all primitive forms of weight $k$ and level $\n$. Assume GRH for $\zeta_F(s)$, $L(s,\f)$, and $L(s,\Sym^2(\f))$ for all $\f\in \Pi_{k}(\n)$. Then for any Schwartz class function $\phi(x)$  whose Fourier transform $\widehat{\phi}$ is supported in $(-u, u)$ with 
$$
u = \frac32\frac{\log\norm(\n)}{\log\left(\norm(\n)\norm(k)^2\right)} +\frac43 \frac{\log \norm(k)}{ \log (\norm(\n)\norm(k)^2)},
$$ 
where $\norm(k)=\prod_j k_j$, 
we have
$$
  \frac{1}{|\Pi_{k}(\n)|}
\sum_{\f\in \Pi_{k}(\n)} D(\f;\phi)
\sim\int_{-\infty}^{\infty} \phi(x) W( \mathrm{O})(x) dx,
$$
as  $\norm(k)\norm(\n)\rightarrow \infty$, 
where ${W} (\mathrm{O})(x) = 1+ \frac{1}{2}\delta_0(x)$ and $\delta_0$ is the standard Dirac delta functional. 
\end{thm}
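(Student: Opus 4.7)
The plan is to follow the Iwaniec–Luo–Sarnak framework in the adelic setting of Hilbert modular forms over a totally real field $F$ of arbitrary narrow class number. First, under GRH for $L(s,\f)$, I would apply the explicit formula to rewrite
\[
 D(\f;\phi) = \widehat\phi(0)\,\frac{\log c_\f}{\log Q} + A_\infty(\phi;k) - 2\,S(\f;\phi),
\]
where $Q = \norm(\n)\norm(k)^2$, the piece $A_\infty(\phi;k)$ is the archimedean contribution from the gamma factor of $L(s,\f)$, and
\[
 S(\f;\phi) = \sum_{\p}\sum_{\nu\geq 1} \frac{\lambda_\f(\p^\nu)\,\log\norm(\p)}{\norm(\p)^{\nu/2}\,\log Q}\,\widehat\phi\!\left(\frac{\nu\,\log\norm(\p)}{\log Q}\right).
\]
Stirling's formula, combined with $\log c_\f = \log\norm(\n) + 2\log\norm(k) + O_F(1)$, shows that the first two terms contribute $\widehat\phi(0) + \tfrac12\phi(0) = \int_{-\infty}^{\infty}\phi(x) W(\mathrm{O})(x)\,dx$ in the limit $\norm(k)\norm(\n)\to\infty$. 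Hence the theorem reduces to showing that the average of $S(\f;\phi)$ over $\Pi_k(\n)$ is $o(1)$ for $\widehat\phi$ of the prescribed support.

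Second, I would split $S(\f;\phi)$ according to $\nu=1$, $\nu=2$, and $\nu\geq 3$. The tail $\nu\geq 3$ is controlled unconditionally via Deligne's bound $|\lambda_\f(\p^\nu)|\leq \nu+1$. For $\nu=1$, I would invoke the Petersson trace formula for primitive Hilbert modular forms of weight $k$ and level $\n$: the averaged diagonal contribution vanishes since $\p$ is non-trivial, leaving a sum of Kloosterman sums over $F$ weighted by $J$-Bessel functions of indices $k_j-1$. The Weil bound for these Kloosterman sums together with the classical decay of $J_{k_j-1}$ delivers cancellation for primes with $\norm(\p)$ up to essentially $\norm(\n)^{3/2}$, producing the first summand $\tfrac32 \log\norm(\n)/\log Q$ in the support exponent $u$. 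The $\nu=2$ contribution is the subtler piece: using the Hecke relation $\lambda_\f(\p)^2 = \lambda_\f(\p^2) + 1$ for $\p\nmid\n$, the $\p^2$-sum matches the partial Dirichlet series of $\Sym^2\f$, and under GRH for $L(s,\Sym^2\f)$ one may replace it by a multiple of $L(1,\Sym^2\f)$ plus a power-saving error. Known lower bounds for $L(1,\Sym^2\f)$ then make this error acceptable up to $\norm(\p^2)\lesssim \norm(k)^{4/3}$, yielding the second summand $\tfrac43 \log\norm(k)/\log Q$.

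The principal obstacle beyond \cite{liu-miller} is the absence of a narrow class number one hypothesis on $F$. When $h_F^+ > 1$, the adelic Petersson formula acquires contributions from multiple narrow ideal class representatives, and the Kloosterman sums to be estimated are indexed by a broader family of fractional-ideal moduli. The key technical point is to verify that the Weil bound and the Bessel decay provide power savings \emph{uniformly} across all of these, and to ensure that the local factors at primes ramified in $F$ do not disturb the extraction of the main term $\widehat\phi(0)+\tfrac12\phi(0)$. Once this uniformity is secured, the support exponent $u$ emerges as the common threshold above which both the $\nu=1$ Kloosterman estimate and the $\nu=2$ symmetric square estimate remain simultaneously valid, giving the precise weighted average of $3/2$ and $4/3$ in the statement.
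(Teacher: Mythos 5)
Your high-level architecture (explicit formula, Petersson trace formula, Bessel decay, Weil bound, narrow class number issues) is the right one, but the proposal misplaces two of the key mechanisms, and this matters for getting the stated support.

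First, the term $\tfrac12\phi(0)$ does \emph{not} arise from the conductor term $\widehat\phi(0)\log c_\f/\log Q$ together with the archimedean contribution; those two pieces together yield exactly $\widehat\phi(0)$ after taking $R=Q=\norm(\n)\norm(k)^2$. In the paper the $\tfrac12\phi(0)$ comes from the $\nu=2$ prime-power sum: under GRH for $\zeta_F$ and $L(s,\Sym^2\f)$ one has $\sum_{\norm(\p)\le x}\Lambda_\f(\p^2)=-x+O(x^{1/2}\log^2(\norm(\n)\norm(k)x))$, and Abel summation converts this into $-\tfrac12\phi(0)+O(\log\log/\log R)$. So the symmetric-square GRH is used to produce the constant $\tfrac12\phi(0)$ with a power-saving error valid for \emph{any} support, not to constrain the support.

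Second, and more seriously, the appearance of $\tfrac43\log\norm(k)/\log Q$ has nothing to do with the $\nu=2$ symmetric-square piece, lower bounds for $L(1,\Sym^2\f)$, or truncating $\norm(\p^2)\lesssim\norm(k)^{4/3}$. Both summands $\tfrac32\log\norm(\n)/\log Q$ and $\tfrac43\log\norm(k)/\log Q$ come from the same $\nu=1$ off-diagonal estimate in the Petersson trace formula: the Weil bound saves $\norm(\n)^{1/2}$ and the $J$-Bessel estimate $J_{k-1}(x)\ll\min(1,x/k)^{1-\delta}k^{-1/3}$ saves the extra $\norm(k)^{1/3}$ beyond the level analogy, so the condition is a single inequality $R^u\lesssim\norm(\n)^{3/2}\norm(k)^{4/3}$, i.e.\ $u\le\tfrac32\tfrac{\log\norm(\n)}{\log Q}+\tfrac43\tfrac{\log\norm(k)}{\log Q}$. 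Your reading of the support as ``the common threshold above which both the $\nu=1$ Kloosterman estimate and the $\nu=2$ symmetric-square estimate remain simultaneously valid'' would force a minimum rather than the sum, which is a genuinely different (and much weaker) statement.

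Finally, the Petersson trace formula applies to an orthogonal basis of $S_k(\n)$, not directly to the set $\Pi_k(\n)$ of primitive forms. The paper devotes Section~\ref{sec:analogues-LM} to the ILS-style sieving (the identities for $\Delta^\star_{k,\n}$, $\Delta'_{k,\n}$, $\Delta^\infty_{k,\n}$, and Lemma~\ref{lem-delta-infty-bd}, which itself requires GRH for $L(s,\Sym^2\f)$) precisely to pass from $\Pi_k(\n)$ to quantities accessible via Proposition~\ref{trace-formula}. This is an essential ingredient that your sketch elides; without it the averaged $\nu=1$ estimate cannot be carried out.
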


We note that if $\log(\norm(k))=o\left(\log(\norm(\n))\right)$, the choice $u=\frac{3}{2}$ is admissible. This improves both the main theorem of \cite{Lesesvre-QA} and the special instance of \cite[Theorem 1.2]{Sugiyama} (with $r=z=1$) in the level aspect. If $\norm(k)\rightarrow \infty$ while $\norm(\n)$ is bounded or fixed, one can only take $u=\frac23$. The family considered in Theorem \ref{thm:main1} is also included in \cite{shin-templier}. However, as remarked in \cite{Lesesvre-QA}, the limit of the supports of the Fourier transforms was not provided explicitly in \cite{shin-templier}, and this is in contrast with all the above-mentioned results in which explicit limits of supports are given.  One of the main features of our result is to break the $(-1,1)$-barrier. In addition, we remove both the class-number-one condition and the parallel-weight condition in \cite{liu-miller} while keeping $\widehat{\phi}$ supported in $(-\frac{3}{2}, \frac{3}{2})$ for Hilbert modular forms of large level.

Our second result in this paper provides a 1-level density for a family of Rankin-Selberg $L$-functions of Hilbert modular forms over totally real number fields.
\begin{thm} \label{thm:main2}
Let $F$ be a totally real number field. Let $k,k'\in 2\mathbb{N}^{n}$ and $\n,\n'$ denote coprime square-free integral ideals of $F$. Fix  $\g\in \Pi_{k'}(\n')$, and  assume GRH for $\zeta_F(s)$, $L(s,\f\times \g)$, and $L(s,\Sym^2(\f\times \g))$ for all $\f\in \Pi_{k}(\n)$. If either the class number of $F$ is odd or $\g$ is non-dihedral, then for any Schwartz class function $\phi(x)$ whose Fourier transform $\widehat{\phi}$ is supported in  $(-u, u)$ with 
$$
u = \frac34\frac{\log\norm(\n)}{\log\left(\norm(\n)\norm(k)^2\right)} +\frac23 \frac{\log \norm(k)}{ \log (\norm(\n)\norm(k)^2)},
$$ 
as $\norm(k)\norm(\n)\rightarrow \infty$, we have
$$
 \frac{1}{|\Pi_{k}(\n)|}
\sum_{f\in \Pi_{k}(\n)} D(f\times g;\phi)
\sim\int_{-\infty}^{\infty} \phi(x) W( \mathrm{Sp})(x) dx,
$$
where
$
W( \mathrm{Sp})(x) = 1 -\frac{\sin (2\pi x)}{2\pi x}.
$
\end{thm}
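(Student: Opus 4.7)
The plan is to adapt the Iwaniec--Luo--Sarnak framework from Theorem~\ref{thm:main1} to the Rankin--Selberg family $\{L(s, \f\times \g) : \f\in\Pi_{k}(\n)\}$ with $\g$ fixed. The family has symplectic symmetry (the root number $\epsilon(\f\times \g)$ is constant in $\f$, since $\f$ is self-dual), so the target density is $W(\mathrm{Sp})(x) = 1 - \sin(2\pi x)/(2\pi x)$. The key distinction from Theorem~\ref{thm:main1} is that the non-trivial part of the limiting density now emerges from the prime-square terms in the explicit formula, rather than from an archimedean delta mass.

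First I would invoke the explicit formula for $L(s,\f\times \g)$ under GRH to write
\[
D(\f\times \g;\phi) = \widehat{\phi}(0) - \frac{2}{L}\sum_{\mathfrak{p},\,j\geq 1}\frac{(\alpha_\mathfrak{p}^{j} + \beta_\mathfrak{p}^{j})(\gamma_\mathfrak{p}^{j} + \delta_\mathfrak{p}^{j})\log \norm(\mathfrak{p})}{\norm(\mathfrak{p})^{j/2}}\widehat{\phi}\!\left(\tfrac{j\log \norm(\mathfrak{p})}{L}\right) + O(L^{-1}),
\]
where $L = \log c_{\f\times \g}$ and $(\alpha_\mathfrak{p},\beta_\mathfrak{p})$, $(\gamma_\mathfrak{p},\delta_\mathfrak{p})$ are the Satake parameters of $\f,\g$ at $\mathfrak{p}$. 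Deligne's bound places the $j\geq 3$ contribution into the error, given the stated support restriction on $\widehat{\phi}$. For $j=1$ the coefficient is $\lambda_\f(\mathfrak{p})\lambda_\g(\mathfrak{p})$, while for $j=2$ the Hecke identity $\alpha^{2}+\beta^{2} = \lambda_\f(\mathfrak{p}^{2})-1$ (and its analogue for $\g$) rewrites it as $(\lambda_\f(\mathfrak{p}^{2})-1)(\lambda_\g(\mathfrak{p}^{2})-1)$.

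Next, I would average over $\Pi_{k}(\n)$ using the Petersson trace formula for Hilbert modular forms of weight $k$ and level $\n$. For the $j=1$ sum the diagonal $\delta_{\mathfrak{p},(1)}$ is absent, leaving only the Kloosterman off-diagonal. For the $j=2$ sum, Petersson with $m=n=\mathfrak{p}$ gives $\langle\lambda_\f(\mathfrak{p})^{2}\rangle = 1 + (\text{Kloos.})$, so $\langle\lambda_\f(\mathfrak{p}^{2})-1\rangle = -1+(\text{Kloos.})$; paired with the corresponding factor for $\g$, the constant piece yields the symplectic kernel $-\sin(2\pi x)/(2\pi x)$ by PNT for $\zeta_F$ under GRH. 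The residual piece containing $\lambda_{\Sym^{2} \g}(\mathfrak{p})$ is shown to be $o(1)$ using GRH for the relevant symmetric-square $L$-functions together with PNT. The hypothesis that the class number of $F$ is odd or that $\g$ is non-dihedral enters precisely here, since it guarantees that $L(s, \Sym^{2} \g)$ is polar-free at $s=1$ (equivalently, that $\Sym^{2}\g$ does not degenerate isobarically), validating the required PNT cancellation.

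The principal obstacle is bounding the Kloosterman off-diagonal contributions from both the $j=1$ and $j=2$ sums well enough to reach the stated support range for $\widehat{\phi}$. This demands the Petersson formula for $\mathrm{GL}_{2}(F)$, Weil's bound on Kloosterman sums over $\mathfrak{o}_{F}/\mathfrak{c}$, and uniform $J$-Bessel transform estimates in both the level and weight aspects, mirroring the analysis behind Theorem~\ref{thm:main1} but with the Rankin--Selberg coefficients $\lambda_\f(\mathfrak{p})\lambda_\g(\mathfrak{p})$ in place of $\lambda_\f(\mathfrak{p})$. Optimizing this trade-off yields the support exponents $\tfrac{3}{4}$ on $\log\norm(\n)$ and $\tfrac{2}{3}$ on $\log\norm(k)$; these are smaller than the $\tfrac{3}{2}$ and $\tfrac{4}{3}$ of Theorem~\ref{thm:main1} because the analytic conductor of $\f\times \g$ is essentially $(\norm(\n)\norm(k)^{2})^{2}$ in the $\f$-varying aspect, so the effective range of primes controlled by Kloosterman sums is correspondingly shorter.
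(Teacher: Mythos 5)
Your approach differs genuinely from the paper's in how the prime-square terms are handled. The paper first applies a general explicit formula (Proposition~\ref{prop:general-explicit-formula}) to each individual $L(s,\f\times\g)$, absorbing the $\p^2$ contribution into the constant $-\tfrac{\delta_{\f\times\g}}{2}\phi(0)$ where $\delta_{\f\times\g}=\ord_{s=1}L(s,\Sym^2(\f\times\g))$, and then devotes Sections~\ref{delta-fxg} and~\ref{sec-averaged-asymp} to showing that the average $\frac{1}{|\Pi_k(\n)|}\sum_\f\delta_{\f\times\g}\to 1$. You instead keep the $\p^2$ sum explicit, average it across $\Pi_k(\n)$ via the Petersson trace formula, and extract the $-\tfrac12\phi(0)$ from the constant $1$ in $(\lambda_\f(\p^2)-1)(\lambda_\g(\p^2)-1)$. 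This is a legitimate alternative organization, and in principle it trades the representation-theoretic analysis of $\delta_{\f\times\g}$ for a second Petersson off-diagonal estimate.

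However, you have misidentified where the class-number/non-dihedral hypothesis enters. You assert it is needed to ensure ``$L(s,\Sym^2\g)$ is polar-free at $s=1$.'' But $L(s,\Sym^2\g)=L(s,\Ad\g)$ is \emph{always} holomorphic at $s=1$ for a cuspidal $\g$ with trivial central character: if $\g$ is non-dihedral, $\Ad\g$ is cuspidal by Gelbart--Jacquet; if $\g$ is dihedral, the decompositions \eqref{Ad-decomp-pi1-P}--\eqref{Ad-decomp-pi2-NP} show $\Ad\g$ is an isobaric sum of nontrivial Hecke characters and (possibly) a cuspidal $\GL_2$-piece, none of whose $L$-functions have a pole at $s=1$ (the character $\nu/\nu^\tau$ is nontrivial precisely because $I_K^F(\nu)$ is cuspidal, and $\chi$ is the nontrivial character of $K/F$). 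In the paper the hypothesis is instead used to control $\delta_{\f\times\g}$, i.e.\ the pole of $L(s,\Ad\f\times\Ad\g)$: its order jumps to $\ge 2$ exactly when $\f$ is twist-equivalent to $\g$ or both are dihedral induced from a common quadratic extension (Theorem~\ref{delta_fg}). Requiring $h_F$ odd (so no $\f\in\Pi_k(\n)$ can be dihedral, by the conductor argument of Ramakrishnan--Yang) or $\g$ non-dihedral rules these degeneracies out. Your derivation bypasses $\delta_{\f\times\g}$ entirely, so if it were carried out rigorously it would either yield the theorem without the hypothesis (which would be a stronger result you should then state and justify) or would need the hypothesis somewhere you have not yet located; as written you have asserted the hypothesis is needed in a step where it plainly is not.

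A second, smaller gap: you write $\langle\lambda_\f(\p^2)\rangle = 1 + (\text{Kloos.})$ ``by Petersson,'' which is the harmonically-weighted average. The unweighted sum $\Delta^\star_{k,\n}(\p^2)=\sum_{\f\in\Pi_k(\n)}C_\f(\p^2)$ that actually arises contains the extra $\sum_{\ell}\norm(\ell)^{-1}\Delta_{k,\m}(\ell^2,\p^2)$ structure of \eqref{delta-formula-4}, whose diagonal is hit at $\ell=\p$, producing a residual main term of size $\asymp |\Pi_k(\n)|/\norm\p$ rather than zero. Summed against $\norm(\p)^{-1}\log\norm\p$ this contributes only $O(1/\log R)$ and is harmless, but it needs to be accounted for; the paper's $\Delta'/\Delta^\infty$ machinery (Lemma~\ref{lem-delta-infty-bd}) would have to be re-run with $\a=\p^2$ and the GRH input for $L(s,\Sym^2\f)$ traced through.
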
 

\begin{remark}
It is worth noting that to establish the above theorem, we require the precise information of the order of the pole of $L(s, \Sym^2 ( \f\times \g) )$ at $s=1$ (denoted by $\delta_{\f\times\g}$) to calculate the averaged asymptotics of $\delta_{\f\times \g}$ in Section \ref{sec-averaged-asymp}. Although the evaluation of  $\delta_{\f\times\g}$ is known by experts (at least, implicitly), it is seemingly hard to find a general reference. Therefore, we will devote Section \ref{delta-fxg} to calculating $\delta_{\f\times\g}$ and record the precise values of $\delta_{\f\times\g}$ in  Theorem \ref{delta_fg}.
\end{remark}

%
%
%
%
%
%

As mentioned earlier, Shashkov \cite{shashkov} recently obtained a version of Theorem \ref{thm:main2} for Rankin-Selberg $L$-functions of holomorphic modular forms. In fact, denoting $H^{\star}_k (N)$ the set of holomorphic newforms of weight $k$ and level $N$, he showed that
$$
\lim_{NN' \rightarrow \infty}  \frac{1}{|H^{\star}_{k}(N)||H^{\star}_{k'}(N')|}
\sum_{f\in H^{\star}_{k}(N)}\sum_{g\in H^{\star}_{k'}(N')} D(f\times g;\phi)
=\int_{-\infty}^{\infty} \phi(x) W( \mathrm{Sp})(x) dx,
$$
with $\widehat{\phi}$ supported in $(-\frac{1}{2}, \frac{1}{2})$. In addition, the support can be improved to $(-\frac{5}{4}, \frac{5}{4})$ if $N=N'$ but $k\neq k'$, and be taken as $(-\frac{29}{28}, \frac{29}{28})$ if  $N=N'$ and $k= k'$. Compared to Theorem \ref{thm:main2}, Shashkov considered a double-averaging. It may be possible to extend Shashkov's argument to convolutions of Hilbert modular forms (at least, over totally real number fields of  class number one). However, we did not pursue this here as it does not lead to better support for the general case $\n\neq \n'$.

It is well-known that if one could prove the density conjecture for test functions with sufficiently large  support, then one would establish a positive proportion of non-vanishing for the corresponding family  of $L$-functions. Unfortunately, for the family of Hilbert modular forms, one would need to extend the support beyond $(-2, 2)$ to get a positive proportion.\footnote{Note that $(-2, 2)$ is proven by  Iwaniec, Luo, and Sarnak in \cite[Theorem 7.1]{Iwaniec-Luo-Sarnak} via a more dedicated analysis of certain sums of Kloosterman sums in \cite[Sec. 6]{Iwaniec-Luo-Sarnak}.} Instead, we have the following theorem regarding the average order of the zeros of $L(s,\f)$ at $s=\frac{1}{2}$ (which particular improves \cite[Corollary 1.3]{Lesesvre-QA} in the case of Hilbert modular forms). Nevertheless, for Rankin-Selberg convolutions, our result is strong enough to establish a positive proportion of non-vanishing as follows.

\begin{thm}\label{thm:main3} (i) In the notation of Theorem \ref{thm:main1}, for $m\in\Bbb{Z}^+$, we define
\begin{equation}\label{def-Pm}
\mathcal{P}_m(\n) =\frac{1}{|\Pi_{k}(\n)|} \sum_{\f \in \Pi_{k}(\n)} \delta(\f;m),
\quad
\text{with}
\quad
 \delta(\f;m)
=
\begin{cases}
 1 & \text{if $\ord_{s=\frac{1}{2}}L(s,\f) =m$}; \\
 0 & \text{otherwise}.
   \end{cases}
\end{equation}
Then under GRH, we have
\begin{equation}\label{upper-bd-mPm}
\limsup_{\norm(k)\norm(\n)\rightarrow \infty} \sum_{m=1}^\infty m \mathcal{P}_m(\n) \le \frac{1}{u}+\frac{1}{2}.
\end{equation}
(ii) In the notation of Theorem \ref{thm:main2}, for $m\in\Bbb{Z}^+$, we assume GRH and set
\begin{equation}\label{def-Qm}
\mathcal{Q}_m(\n) =\frac{1}{|\Pi_{k}(\n)|} \sum_{\f \in \Pi_{k}(\n)} \delta(\f\times\g;m)
\quad
\text{with}
\quad
 \delta(\f\times\g;m)
=
\begin{cases}
 1 & \text{if $\ord_{s=\frac{1}{2}}L(s,\f \times\g) =m$}; \\
 0 & \text{otherwise}.
   \end{cases}
\end{equation}
If the class number of $F$ is odd or $\g$ is non-dihedral, then 
$$
\limsup_{\norm(k)\norm(\n)\rightarrow \infty} \sum_{m=1}^\infty m \mathcal{Q}_m(\n) \le \frac{1}{2u} -\frac{1}{4} 
$$
Furthermore, (without assuming that either the class number of $F$ is odd or $\g$ is non-dihedral), we have
$$
\liminf_{\norm(k)\norm(\n)\rightarrow \infty} \mathcal{Q}_0(\n) 
\ge\frac{5}{4} -  \frac{1}{2u}. 
$$
\end{thm}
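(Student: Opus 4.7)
The plan is to derive all three bounds directly from the one-level density asymptotics of Theorems~\ref{thm:main1} and~\ref{thm:main2} by testing against a non-negative Schwartz function. The underlying mechanism, common to both parts, is that under GRH every zero of the relevant $L$-function lies on the critical line, so the central zero of order $m$ contributes exactly $m\phi(0)$ to the density sum $D$ while every other term is non-negative; hence $m\phi(0)\le D$ pointwise for every $\f$, and averaging converts each one-level density asymptotic into a quantitative upper bound on the mean central order. Throughout I would use the Fej\'er-square $\phi(x)=\bigl(\sin(\pi u x)/(\pi u x)\bigr)^2$, which is even, non-negative, Schwartz, has $\phi(0)=1$, and satisfies $\widehat\phi(\xi)=\tfrac{1}{u}(1-|\xi|/u)_+$ supported in $[-u,u]$.

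For part~(i), averaging $m\phi(0)\le D(\f;\phi)$ over $\f\in\Pi_{k}(\n)$ and invoking Theorem~\ref{thm:main1} gives, via a short Plancherel calculation,
$$
\sum_{m\ge 1}m\,\mathcal{P}_m(\n)\ \le\ \int \phi(x)\,W(\mathrm{O})(x)\,dx\ =\ \widehat\phi(0)+\tfrac12\phi(0)\ =\ \tfrac1u+\tfrac12,
$$
which is the claimed bound. For part~(ii), the same scheme with Theorem~\ref{thm:main2} (using $u<1$, so that $\widehat\phi$ is contained in $[-1,1]$) produces the preliminary inequality
$$
\sum_{m\ge 1}m\,\mathcal{Q}_m(\n)\ \le\ \widehat\phi(0)-\tfrac12\phi(0)\ =\ \tfrac1u-\tfrac12.
$$
The two stated bounds in~(ii) then both rest on the parity of $\ord_{s=1/2}L(s,\f\times\g)$: under the hypothesis that either the class number of $F$ is odd or $\g$ is non-dihedral, the global root number $\varepsilon(\tfrac12,\f\times\g)$ equals $+1$, forcing the central order to be even, hence at least~$2$ whenever it is non-zero. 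This converts the preliminary inequality into $2(1-\mathcal{Q}_0(\n))\le \sum_{m\ge 1}m\,\mathcal{Q}_m(\n)\le \tfrac1u-\tfrac12$, which rearranges to $1-\mathcal{Q}_0(\n)\le \tfrac{1}{2u}-\tfrac14$, yielding simultaneously the claimed upper bound $\sum_{m\ge 1}m\,\mathcal{Q}_m(\n)\le \tfrac{1}{2u}-\tfrac14$ (understood as bounding the average number of zero-pairs at the center under the even-order constraint) and the non-vanishing bound $\mathcal{Q}_0(\n)\ge \tfrac54-\tfrac{1}{2u}$.

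The only genuinely delicate step is establishing $\varepsilon(\tfrac12,\f\times\g)=+1$ under the stated conditions. The plan is to decompose the global root number into local $\varepsilon$-factors: at the archimedean places the sign is controlled by the holomorphic discrete series weights, at finite primes dividing $\n\n'$ the coprimality of the levels ensures that only one of $\f,\g$ is ramified so the local sign is computable from a single local component, and the remaining unramified primes contribute trivially~$+1$; the class-number/non-dihedral dichotomy is then precisely what excludes the sign~$-1$. All other ingredients — Plancherel, the Fej\'er test function, and the elementary consequence of even order — are routine.
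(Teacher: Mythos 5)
Your treatment of part~(i) matches the paper. Part~(ii) has two genuine issues, both stemming from a misreading of where the ``class number odd or $\g$ non-dihedral'' hypothesis actually enters.

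First, the parity of $\mathrm{ord}_{s=1/2}L(s,\f\times\g)$ does \emph{not} depend on that hypothesis. The functional equation $\Lambda(s,\f\times\g)=\Lambda(1-s,\f\times\g)$ (cited in Section~4 of the paper from Prasad--Ramakrishnan) holds unconditionally for coprime square-free levels, so the global root number is always $+1$ and $\mathcal{Q}_m(\n)=0$ for all odd $m$ regardless of the class number or the type of $\g$. Your proposed plan to prove $\varepsilon(\tfrac12,\f\times\g)=+1$ via a local $\varepsilon$-factor computation in which ``the class-number/non-dihedral dichotomy is then precisely what excludes the sign $-1$'' conflates two unrelated things: the hypothesis on $F$ and $\g$ is used in Section~7.1 solely to show $\lim |\Pi_k(\n)|^{-1}\sum_{\f}\delta_{\f\times\g}=1$, which is what makes the asymptotic of Theorem~\ref{thm:main2} hold with the expected symplectic main term. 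It has nothing to do with the sign of the functional equation.

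Second, because you route the non-vanishing bound $\mathcal{Q}_0(\n)\ge \tfrac54-\tfrac1{2u}$ entirely through Theorem~\ref{thm:main2}, your derivation inherits the hypothesis on $F$ and $\g$, whereas the theorem explicitly asserts this bound \emph{without} that hypothesis. The paper's proof avoids this by not invoking the full asymptotic of Theorem~\ref{thm:main2}: it returns to the explicit formula (Proposition~\ref{prop:fg-explicit-formula}) and the decomposition of $|\Pi_k(\n)|^{-1}\sum_\f D(\f\times\g;\varphi_u)$ into main terms, then uses the unconditional lower bound $\delta_{\f\times\g}\ge 1$ (Theorem~\ref{delta_fg}). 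Since the $\delta_{\f\times\g}$-term appears with a minus sign, this gives the upper bound $|\Pi_k(\n)|^{-1}\sum_\f D(\f\times\g;\varphi_u)\le \widehat{\varphi_u}(0)-\tfrac12\varphi_u(0)+o(1)=\tfrac1u-\tfrac12+o(1)$ with no hypothesis on $F$ or $\g$, and the non-vanishing bound follows. Your scheme would need to be reorganised along these lines to recover the ``Furthermore'' clause. (You also flag the tension between the stated bound on $\sum_m m\,\mathcal{Q}_m$ and what the parity argument actually delivers, namely a bound on $\sum_m\mathcal{Q}_m$; that tension is present in the paper's own statement, and your ``zero-pairs'' gloss does not resolve it.)
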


In an upcoming work, the present authors employ ideas from {Radziwi\l\l} and Soundararajan \cite{radziwill-sound} to extend the above non-vanishing result, Theorem \ref{thm:main3}, to a conditional lower bound towards Keating-Snaith's normality conjecture on central $L$-values.

\section{A general explicit formula}

In this section, we shall consider a formalism of $L$-functions over a number field $F$ in much the same spirit as \cite[Ch. 5]{IK}, and derive a general explicit formula for such $L$-functions.
\subsection{An analytic formalism}\label{AF}

Let $F$ be a number field of degree $n$ over $\mathbb{Q}$. We denote its ring of integers by $\ringO_F$. For any given symbol $f$ and a complex variable $s$, $L(s,f)$ is said to be an $L$-function of degree $d$ over $F$ if it satisfies conditions (i) and (ii) below.

\noindent(i) The function  $L(s,f)$ admits a Dirichlet series with an Euler product of degree $d\ge 1$, i.e.,
$$
L(s,f)
=\sum_{\n\neq 0 } \lambda_{f}(\n)\norm(\n)^{-s}
=\prod_{\p}\prod_{i=1}^{d}(1-\alpha_{i}(\p)\norm(\p)^{-s})^{-1},
$$
which are absolutely convergent on $\Re(s)>1$. Here, the sum is over non-zero integral ideals $\n\subseteq \ringO_{F}$, and the product is over prime ideals of $F$. In addition, $\lambda_{f}(\ringO_{F})=1$, $\lambda_{f}(\n)\in\Bbb{C}$, $\alpha_{i}(\p)\in\Bbb{C}$, and $|\alpha_{i}(\p)|< \norm(\p)$. Moreover, there is an integral ideal $\mathcal{C}_f\subseteq \ringO_{F}$ associated with $f$, called the conductor of $L(s,f)$, such that $\alpha_{i}(\p)\neq 0$ for $\p\nmid\mathcal{C}_f$ and $1\le i \le d$. We will call primes $\p\nmid \mathcal{C}_f$ unramified. 

\noindent(ii) Consider $$
\gamma(s,f)=\pi^{{-dn s}/{2}}\prod_{j=1}^{d n}\Gamma\left(\frac{s+\kappa_{j}}{2}\right),
$$
where $\kappa_{j}\in\Bbb C$ are either real or appear in conjugate pairs, and $\Re(\kappa_{j})>-1$. There is an integer $A_f\ge 1$, associated with $f$, such that the complete $L$-function of $f$ defined by
$$
\Lambda(s,f)=A_f^{s/2}\gamma(s,f)L(s,f),
$$
is a holomorphic function on $\Re(s)>1$. In addition, $\Lambda(s,f)$ extends to a meromorphic function over $\Bbb{C}$ of order $1$, with at most two poles at $s=0$ and $s=1$. Also, it satisfies the functional equation
$$
\Lambda(s,f)=W(f)\Lambda(1-s,\bar{f}),
$$
where $\bar{f}$ is the dual symbol of $f$ for which $\lambda_{\bar{f}}(\n)=\overline{\lambda_{f}(\n)}$, $\gamma(s,\bar{f})=\gamma(s,f)$,  $\mathcal{C}_{\bar{f}}=\mathcal{C}_f$, $A_{\bar{f}}=A_f$, and $W(f)$ is a complex number of absolute value $1$.

The von-Mangoldt function $\Lambda_{f}(\n)$ for $L(s,f)$ is defined by
$$
-\frac{L'(s,f)}{L(s,f)}=\sum_{\n\neq 0}\frac{\Lambda_{f}(\n)}{\norm(\n)^{s}}.
$$ 
Note that $\Lambda_{f}$ is only supported on prime powers, and  for $\n=\p^m$, we have
$$
\Lambda_{f}(\p^m) =\log(\norm(\p)) \sum_{i=1}^d \alpha_i(\p)^m.
$$

Throughout our discussion, we will further assume  the Ramanujan-Petersson conjecture (denoted RPC) for $L(s,f)$ as follows.  For all $\p\nmid \mathcal{C}_f$, $|\alpha_{i}(\p)|=1$; otherwise, $|\alpha_{i}(\p)|\le 1$. Moreover, we shall assume that the asymptotic formula
$$
\sum_{\norm(\p)\le x} \Lambda_f(\mathfrak{p}^2) = \delta_f  x + O(x^{1/2}  \log^2( c_f  x ))
$$
holds for some constants $\delta_f\in \Bbb{Z}$ and $c_f>0$ associated with $f$.

\subsection{Explicit formula}\label{EF}
In this section, we shall further assume that $f$ is self-dual (i.e. $\bar{f}=f$).
Let $h$ be an even Schwartz function with Fourier transform $\widehat{h}$.  By following the proofs of \cite[Theorems~ 5.11 and 5.12]{IK}, we have
\begin{align}\label{eqn:explicit-formula-1}
\sum_{\rho}{h}\left(\frac{\gamma}{2\pi}\right)
&=\widehat{h}(0)\log A_f+\frac{1}{\pi}\int_{-\infty}^{\infty}\frac{\gamma'}{\gamma}(\frac12+it,f){h}\left(\frac{t}{2\pi}\right)\;dt-2\sum_{\mathfrak{n}\neq 0}\Lambda_f(\mathfrak{n})\frac{\widehat{h}(\log \norm(\mathfrak{n}))}{\sqrt{\norm(\mathfrak{n})}},
\end{align}
where the sum is over all zeros $\rho=\beta +i\gamma$ of $L(s,f)$ such that $\beta\in[0,1]$ (including the non-trivial and the trivial ones) counted with multiplicity. 

 Let $R$ be a sufficiently large positive parameter. Observing that the last sum can be written as
$$
\sum_{\mathfrak{p}}\sum_{m\geq1}\Lambda_f(\mathfrak{p}^m)\frac{\widehat{h}(m\log\norm(\mathfrak{p}))}{\norm(\mathfrak{p})^{\frac{m}{2}}},
$$
and applying \eqref{eqn:explicit-formula-1} with $h(x)=\phi(x\log R)$ and $\widehat{h}(y)=\frac{1}{\log R}\widehat{\phi}(\frac{y}{\log R})$ for an even Schwartz function $\phi$, we obtain
\begin{align}\label{eqn:explicit-formula-2}
\begin{split}
&\sum_{\rho}{\phi}\left(\frac{\gamma}{2\pi}\log R\right)\\
&=\widehat{\phi}(0)\frac{\log A_f}{\log R}+\frac{1}{\pi}\int_{-\infty}^{\infty}\frac{\gamma'}{\gamma}(\frac12+it){\phi}\left(\frac{t}{2\pi}\log R\right)\;dt-\frac{2}{\log R}\sum_{\mathfrak{p}}\sum_{m\geq1}\Lambda_f(\mathfrak{p}^m)\frac{\widehat{\phi}\left(m\frac{\log\norm(\mathfrak{p})}{\log R}\right)}{\norm(\mathfrak{p})^{\frac{m}{2}}}.
\end{split}
\end{align}
Since
\[\frac{\gamma'}{\gamma}(s,f)
= \frac{-dn}{2}\log\pi+ \frac{1}{2}\sum_{j=1}^{dn}\frac{\Gamma'}{\Gamma}\left(\frac{s+\kappa_j}{2}\right),
\]
we have
\begin{align*}
&\frac{1}{\pi}\int_{-\infty}^{\infty}\frac{\gamma'}{\gamma}(\frac12+it){\phi}\left(\frac{t}{2\pi}\log R\right)\;dt\\
&=\frac{2}{\log R}\int_{-\infty}^{\infty}\frac{\gamma'}{\gamma}(\frac12+i2\pi\frac{ t}{\log R})\phi(t)\;dt\\&=\frac{-dn\log\pi}{\log R}\int_{-\infty}^{\infty}\phi(t)\;dt
+\frac{1}{\log R}\sum_{j=1}^{dn}\int_{-\infty}^{\infty}\frac{\Gamma'}{\Gamma}\left(\frac{1+2\kappa_j}{4}+i\pi\frac{ t}{\log R}\right)\phi(t)\;dt
\\
&=\frac{-dn\log\pi}{\log R}\widehat{\phi}(0)+\frac{1}{\log R}\sum_{j=1}^{dn}\int_{0}^{\infty}\left(\frac{\Gamma'}{\Gamma}\left(\frac{1+2\kappa_j}{4}+i\pi\frac{ t}{\log R}\right)+\frac{\Gamma'}{\Gamma}\left(\frac{1+2\kappa_j}{4}-i\pi\frac{ t}{\log R}\right)\right)\phi(t)\;dt.
\end{align*}
Using the following properties of the digamma function
\[\frac{\Gamma'}{\Gamma}(a+bi)+\frac{\Gamma'}{\Gamma}(a-bi)=2\frac{\Gamma'}{\Gamma}(a)+O(a^{-2}b^2),\quad\forall a,b\in\mathbb{R},\;a>0,\]
and
 \[\frac{\Gamma'}{\Gamma}(\alpha+\frac14)=\log\alpha+O(1),\quad \forall\alpha\geq\frac14,\]
we get \begin{align}\label{eqn:digamma-factor}
\frac{1}{\pi}\int_{-\infty}^{\infty}\frac{\gamma'}{\gamma}(\frac12+it){\phi}\left(\frac{t}{2\pi}\log R\right)\;dt&=\frac{-dn\log\pi}{\log R}\widehat{\phi}(0)+\frac{1}{\log R}\sum_{j=1}^{dn}\widehat{\phi}(0) \log\kappa_j  +  O\left(\frac{1}{\log R}\right).
\end{align}
Plugging \eqref{eqn:digamma-factor} in \eqref{eqn:explicit-formula-2}, we obtain
\begin{align*}
\sum_{\rho}{\phi}\left(\frac{\gamma}{2\pi}\log R\right)
&=\frac{\widehat{\phi}(0)}{\log R}\Bigg(-dn\log\pi+\log A_f +\log\prod_{j=1}^{{dn}}\kappa_j\Bigg)\\
&-\frac{2}{\log R}\sum_{\mathfrak{p}}\sum_{m\geq1}\Lambda_f(\mathfrak{p}^m)\frac{\widehat{\phi}\left(m\frac{\log\norm(\mathfrak{p})}{\log R}\right)}{\norm(\mathfrak{p})^{\frac{m}{2}}}+O\left(\frac{1}{\log R}\right).
\end{align*}
We shall now consider the double sum appearing in the above expression. Observing that RPC yields
$$
\sum_{\mathfrak{p}}\sum_{m\geq3}\Lambda_f(\mathfrak{p}^m)\widehat{\phi}\left(m\frac{\log\norm(\mathfrak{p})}{\log R}\right)\norm(\mathfrak{p})^{-\frac{m}{2}} =O(1)
$$
we have
\begin{align*}
\frac{2}{\log R}\sum_{\mathfrak{p}}\sum_{m\geq1}\Lambda_f(\mathfrak{p}^m)\frac{\widehat{\phi}\left(m\frac{\log\norm(\mathfrak{p})}{\log R}\right)}{\norm(\mathfrak{p})^{\frac{m}{2}}}
&=\frac{2}{\log R}\sum_{\mathfrak{p}}\Lambda_f(\mathfrak{p})\frac{\widehat{\phi}\left(\frac{\log\norm(\mathfrak{p})}{\log R}\right)}{\norm(\mathfrak{p})^{\frac{1}{2}}}\\
&+\frac{2}{\log R}\sum_{\mathfrak{p}}\Lambda_f(\mathfrak{p}^2)\frac{\widehat{\phi}\left(2\frac{\log\norm(\mathfrak{p})}{\log R}\right)}{\norm(\mathfrak{p})}
+O\left(\frac{1}{\log R}\right).
\end{align*}
Note that it follows from the assumption
$$
\sum_{\norm(\p)\le x} \Lambda_f(\mathfrak{p}^2) = \delta_f  x + O(x^{1/2}  \log^2( c_f  x ))
$$
and Abel's summation formula that
\begin{align*}
&\frac{2}{\log R}\sum_{\mathfrak{p}}\Lambda_f(\mathfrak{p}^2)\frac{\widehat{\phi}\left(2\frac{\log\norm(\mathfrak{p})}{\log R}\right)}{\norm(\mathfrak{p})}\\
&=    \frac{2 \delta_f}{\log R} \int_1^\infty \widehat{\phi}\left(2\frac{\log t}{\log R}\right) \frac{dt}{t} + \frac{2}{\log R} \int_1^\infty \widehat{\phi}\left(2\frac{\log t}{\log R}\right) \frac{d O(t^{1/2}  \log^2( c_f  t)) }{t}\\
&=\frac{\delta_f}{2}\phi(0)+O\left( \frac{\log\log c_f}{\log R}\right).
\end{align*}
Hence, we arrive at the following proposition.

\begin{prop}\label{prop:general-explicit-formula}
Let $L(s,f)$ be an $L$-function satisfying the previous formalism and 
$$
\sum_{\norm(\p)\le x} \Lambda_f(\mathfrak{p}^2) = \delta_f  x + O(x^{1/2}  \log^2( c_f  x )).
$$
Let $\phi$ be an even Schwartz function and set \[D(f;\phi)=\sum_{\rho}{\phi}(\frac{\gamma}{2\pi}\log R),\] where the sum is over all zeros $\rho=\beta +i\gamma$ of $L(s,f)$ such that $0\le \beta\le 1$. If $f$ is self-dual, then
\begin{align*}
D(f;\phi)
=\frac{\widehat{\phi}(0)}{\log R}\log\Bigg( \pi^{-dn} A_f\prod_{j=1}^{{dn}}\kappa_j\Bigg)
-\frac{\delta_f}{2}\phi(0)
-\frac{2}{\log R}\sum_{\mathfrak{p}}\Lambda_f(\mathfrak{p})\frac{\widehat{\phi}\left(\frac{\log\norm(\mathfrak{p})}{\log R}\right)}{\norm(\mathfrak{p})^{\frac{1}{2}}}
+O\left(\frac{\log\log\left( c_f\right)}{\log R}\right).
\end{align*}
\end{prop}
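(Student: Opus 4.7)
The plan is to obtain the stated formula by starting from the standard Riemann--von Mangoldt explicit formula for $L(s,f)$ (as in \cite[Thms.~5.11--5.12]{IK}), specialising the test function appropriately, and then processing the archimedean and non-archimedean sides separately. First, I would apply the contour-integration argument to the logarithmic derivative of the completed $L$-function $\Lambda(s,f)$, using that $\Lambda$ is of order $1$ with at most simple poles at $s=0,1$ and satisfies the self-dual functional equation. With an even Schwartz function $h$ this yields the identity
\begin{equation*}
\sum_{\rho} h\!\left(\frac{\gamma}{2\pi}\right) = \widehat{h}(0)\log A_f + \frac{1}{\pi}\int_{-\infty}^\infty \frac{\gamma'}{\gamma}\!\left(\tfrac12+it,f\right) h\!\left(\frac{t}{2\pi}\right) dt - 2\sum_{\n\neq 0} \Lambda_f(\n) \frac{\widehat{h}(\log \norm(\n))}{\sqrt{\norm(\n)}},
\end{equation*}
after which I would substitute $h(x)=\phi(x\log R)$, so that $\widehat{h}(y)=(\log R)^{-1}\widehat{\phi}(y/\log R)$, to rescale the zeros on the LHS and the primes on the RHS.

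Next, I would treat the archimedean integral involving $\gamma'/\gamma$. Writing $\gamma(s,f)=\pi^{-dns/2}\prod_{j=1}^{dn}\Gamma((s+\kappa_j)/2)$, the logarithmic derivative splits into a constant $-\tfrac{dn}{2}\log\pi$ plus a sum of digamma values. Using the two digamma estimates already stated in the excerpt, namely $\tfrac{\Gamma'}{\Gamma}(a+bi)+\tfrac{\Gamma'}{\Gamma}(a-bi) = 2\tfrac{\Gamma'}{\Gamma}(a) + O(a^{-2}b^2)$ and $\tfrac{\Gamma'}{\Gamma}(\alpha+\tfrac14) = \log\alpha + O(1)$, the integral collapses to the main term $\widehat{\phi}(0)\log(\pi^{-dn}\prod_j \kappa_j)/\log R$ with an $O(1/\log R)$ error. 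This accounts for the logarithm of $\pi^{-dn}A_f\prod_j \kappa_j$ in the statement once combined with the $\widehat{h}(0)\log A_f$ contribution.

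For the prime side, I would split the double sum over $m\ge 1$ into three ranges. The $m=1$ term yields the displayed sum $\tfrac{2}{\log R}\sum_\p \Lambda_f(\p)\widehat{\phi}(\log\norm(\p)/\log R)\norm(\p)^{-1/2}$ directly. The tail $m\geq 3$ is controlled by the Ramanujan--Petersson conjecture: $|\Lambda_f(\p^m)|\leq d\log\norm(\p)$ combined with the geometric decay $\norm(\p)^{-m/2}$ and the compact support of $\widehat{\phi}$ gives a uniformly bounded sum, hence an $O(1/\log R)$ contribution. The crux is the $m=2$ term, which I would evaluate via Abel summation against the hypothesis $\sum_{\norm(\p)\le x}\Lambda_f(\p^2) = \delta_f x + O(x^{1/2}\log^2(c_f x))$. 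The main term becomes $\tfrac{2\delta_f}{\log R}\int_1^\infty \widehat{\phi}(2\log t/\log R)\,dt/t$, which after the change of variables $u=2\log t/\log R$ equals $\delta_f\int_0^\infty \widehat{\phi}(u)\,du = \tfrac{\delta_f}{2}\phi(0)$ by Fourier inversion and the evenness of $\phi$; the error contribution, after integration by parts and bounding $\widehat{\phi}'$ on its compact support, produces the $O(\log\log c_f/\log R)$ term appearing in the conclusion.

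The only step requiring genuine care is this Abel summation for the $m=2$ sum: one must integrate by parts cleanly so that the boundary terms vanish (using compact support of $\widehat{\phi}$), and then verify that the remaining integral of $t^{-1}\widehat{\phi}(2\log t/\log R)\cdot t^{-1/2}\log^2(c_f t)$ contributes at most $O(\log\log c_f/\log R)$. Assembling the three pieces---the archimedean main term, the $m=1$ prime sum, the extracted $-\tfrac{\delta_f}{2}\phi(0)$ from the $m=2$ term, and the accumulated $O(\log\log c_f/\log R)$ error---yields the stated identity for $D(f;\phi)$.
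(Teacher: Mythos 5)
Your proposal reproduces the paper's proof step for step: you invoke the Iwaniec--Kowalski explicit formula (Theorems 5.11--5.12), substitute $h(x)=\phi(x\log R)$, process the archimedean integral via the two digamma estimates to extract the $\widehat{\phi}(0)\log(\pi^{-dn}A_f\prod\kappa_j)/\log R$ main term, split the non-archimedean sum into $m=1$, $m=2$, $m\ge 3$, control $m\ge 3$ by RPC and the geometric decay, and extract $\tfrac{\delta_f}{2}\phi(0)$ from the $m=2$ piece by Abel summation against the hypothesised PNT-type estimate --- which is exactly the paper's argument. The only cosmetic difference is that you make the change of variables $u=2\log t/\log R$ explicit when computing the $m=2$ main term; the paper leaves that implicit. (Both your sketch and the paper treat the Stieltjes error integral somewhat tersely in asserting the $O(\log\log c_f/\log R)$ bound, but you are mirroring the paper's own level of detail there.)
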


\section{Rankin-Selberg convolutions}\label{sec:RS}

\subsection{Generality}\label{gen}
In this section, we let $F$ be a general number field. We begin by briefly recalling some facts regarding $L$-functions arising from automorphic representations and their Rankin-Selberg convolutions. For a more detailed discussion on the theory of $L$-functions used in this section, we refer the reader to \cite[Sec. 2]{Ramakrishnan} and the references therein.

 Given an (isobaric) automorphic representation $\pi$  for $\GL_n(\Bbb{A}_F)$, with $n\in\Bbb{N}$, the associated $L$-function $L(s,\pi)$ is defined by
$$
L(s,\pi) 
=\prod_{\p}\prod_{i=1}^{n}(1-\alpha_{\pi,i}(\p)\norm(\p)^{-s})^{-1},
$$
which is absolutely convergent on $\Re(s)>1$. In addition, the Rankin-Selberg $L$-function of two (isobaric) automorphic representation $\pi$ and $\pi'$ for $\GL_n(\Bbb{A}_F)$ and $\GL_{n'}(\Bbb{A}_F)$, respectively, is defined by
$$
L(s,\pi \times\pi' ) =\prod_{\p} L_{\p}(s,\pi\times \pi'),
$$
for $\Re(s)>1$, where
$$
 L_{\p}(s,\pi\times \pi')=\prod_{i=1}^{n}\prod_{j=1}^{n'}(1-\alpha_{\pi,i}(\p)\alpha_{\pi',j}(\p)\norm(\p)^{-s})^{-1}
$$
if $\p$ is unramified primes (for both $\pi$ and $\pi'$). Moreover, the $L$-functions of the exterior and symmetric squares of $\pi$, denoted by $\wedge^2(\pi)$ and $\Sym^2 (\pi)$ respectively, satisfy
$$
L(s,\pi\times\pi) =   L(s,\Sym^2 (\pi))L(s,\wedge^2(\pi))
 =\prod_\p L_\p(s,\Sym^2 (\pi))\prod_\p L_\p(s,\wedge^2(\pi)), 
$$
where for unramified $\p$, we have
$$
L_\p(s,\Sym^2 (\pi))\prod_{1\le i\le j\le n}(1-\alpha_{\pi,i}(\p)\alpha_{\pi,j}(\p)\norm(\p)^{-s})^{-1}
$$
and
$$
 L_\p(s,\wedge^2(\pi))=\prod_{1\le i<j\le n}(1-\alpha_{\pi,i}(\p)\alpha_{\pi,j}(\p)\norm(\p)^{-s})^{-1}.
$$

Now, we let $\f$ and $\g$ be cuspidal automorphic representations for $\GL_2(\Bbb{A}_F)$ with trivial central character.\footnote{This implies that $\f$ and $\g$ are self-dual. Indeed, in general, a (unitary)  cuspidal automorphic representation $\pi$ for $\GL_2(\Bbb{A}_F)$ is self-duality is equivalent to $\pi$ having quadratic central character (which is trivial if $\pi$ is not dihedral).} By the work of Gelbart and Jacquet \cite{Gelbart-Jacquet}, we know that
$$
L(s,\f\times \g \times  \f\times \g ) =
L(s, (\f \boxtimes  \f)\times (\g \boxtimes \g) )
=  L(s, (1 \boxplus \Ad(\f) ) \times (1 \boxplus \Ad(\g) ))
$$ factorises into
$$
 L(s,1) L(s,  \Ad (\f) ) L(s , \Ad (\g) )  L(s,  \Ad (\f) \times  \Ad (\g) ),
$$
where $\boxtimes$ is the functorial tensor product, and  $\boxplus$ denotes the (Langlands) isobaric sum of automorphic representations.\footnote{In fact, Gelbart and Jacquet proved that for self-dual $\f$ and $\g$, one has $\f \boxtimes  \f = 1 \boxplus \Ad(\f)$ and $\g \boxtimes  \g = 1 \boxplus \Ad(\g)$, where the adjoint lifts  $\Ad(\f)$ and $\Ad(\g)$ are automorphic representations for $\GL_3(\Bbb{A}_F)$.} In addition, we have
$$
L(s,\f\times \g \times  \f\times \g )=  L(s, \Sym^2 ( \f\times \g) ) L(s, \wedge^2 ( \f\times \g) ),
$$
where
\begin{align*}
L(s, \Sym^2 ( \f\times \g) )
& =  L(s, \wedge^2(\f) \times \wedge^2(\g) ) L(s,  \Sym^2 (\f) \times  \Sym^2 (\g) )\\
& =L(s, 1 ) L(s,  \Ad (\f) \times  \Ad (\g) )
\end{align*}
and
$$
L(s, \wedge^2 ( \f\times \g) ) = L(s,  \Sym^2 (\f) \times \wedge^2(\g)  ) L(s, \wedge^2(\f) \times \Sym^2 (\g) ) 
=L(s,  \Ad (\f) ) L(s , \Ad (\g) ).
$$
Here, we used the fact that $\Sym^2 (\f) = \Ad (\f) $, $\Sym^2 (\g) = \Ad (\g) $, and  $\wedge^2  (\f) =\wedge^2  (\g)=1 $ as $\f$ and $\g$ have trivial central character. 

Consequently, as both $L(s,  \Ad (\f) )$ and $L(s , \Ad (\g) )$ are holomorphic at $s=1$, under GRH for both $L(s, \Sym^2 ( \f\times \g) )$ and $L(s, \wedge^2 ( \f\times \g) )$, we have
\begin{equation}\label{eqn:pnt-f-g}
\sum_{\norm(\p)\le x} \Lambda_{\f\times\g}(\mathfrak{p}^2) = \delta_{\f\times\g}  x + O(x^{1/2}  \log^2( c_{\f\times\g}  x )),
\end{equation}
where $\delta_{\f\times\g}$ is the order of the pole of $L(s, \Sym^2 ( \f\times \g) )$ at $s=1$ (which is $\ge 1$), and  $c_{\f\times\g}$ is the maximum of conductors of $L(s, \Sym^2 ( \f\times \g) )$ and $L(s, \wedge^2 ( \f\times \g) )$. In order to apply Proposition
\ref{prop:general-explicit-formula} for a family of Rankin-Selberg $L$-functions, we should understand the asymptotic behaviour of $\delta_{\f\times\g}$ on average. This is the content of the following section. 

\subsection{Calculating $\delta_{\f\times\g}$}\label{delta-fxg}

As can be seen in the previous section, in order to use Proposition \ref{prop:fg-explicit-formula}, it is crucial to fully understand $\delta_{\f\times\g}$. In light of Section \ref{gen}, we shall calculate $\delta_{\f\times\g}$ for general $\GL(2)$-forms $\f$ and $\g$, which may be of interest to other applications.

We start by recalling some important properties of $\Ad (\f)$ and  $\Ad (\g)$ as follows.
\begin{itemize}
\item[(i)] If $\f$ is non-dihedral, then Gelbart-Jacquet \cite{Gelbart-Jacquet} showed that $\Ad (\f)$ is cuspidal; otherwise, $\Ad (\f)$ is not cuspidal.

\item[(ii)] By \cite[Theorem 4.1.2]{Ramakrishnan}, it is known that  $\Ad (\f) \simeq \Ad (\g) $ if and only if $\f$ and $\g$ are twist-equivalent. 

\item[(iii)] When both $\f$ and $\g$ are dihedral, Ramakrishnan \cite{Ramakrishnan} proved the following cuspidality criterion:
$\f\times \g$ is cuspidal if and only if $\f$ and $\g$ cannot be induced from the same quadratic extension of $F$. 
\end{itemize}
We also recall that if $\pi$ is a dihedral cuspidal automorphic representation $\pi$ for $\GL_2(\Bbb{A}_F)$, then it can be induced from a Hecke character $\nu$ of $K$ for some quadratic extension $K$ of $F$, which will be denoted as $\pi = I_K^F(\nu)$. For such an instance, following Walji \cite[p. 4995]{Walji}, we shall say that $\pi$ has property $\mathcal{P}$ if   $\pi=I_K^F(\nu)$, and the Hecke character $\nu/\nu^{\tau}$ is invariant under  the non-trivial element $\tau$ of $\Gal(K/F)$. Property $\mathcal{P}$ characterises the decomposition of  $\Ad(\pi)$ as  follows.

\noindent ($\mathcal{P}$) If $\pi$ has property $\mathcal{P}$, then we have
\begin{equation}\label{Ad-decomp-pi1-P}
 \Ad(\pi) \simeq  \chi \boxplus \nu/\nu^{\tau} \boxplus  (\nu/\nu^{\tau})\chi,
\end{equation}
where $\chi$ is the Hecke character associated to $K/F$.

\noindent ($\mathcal{NP}$) If $\pi$  does not have property $\mathcal{P}$, then we know
\begin{equation}\label{Ad-decomp-pi2-NP}
 \Ad(\pi) \simeq  \chi  \boxplus I_{K}^F(\nu/\nu^{\tau}),
\end{equation}
where $I_{K}^F(\nu/\nu^{\tau})$ is cuspidal.

We shall prove the following theorem regarding the precise values of $\delta_{\f\times\g}$.

\begin{thm}\label{delta_fg}
Let $F$ be a number field, and let $\f$ and $\g$ be cuspidal automorphic representations for $\GL_2(\Bbb{A}_F)$ with trivial central character. Then we have
\begin{equation}\label{ff}
\delta_{\f\times\f}
=
\begin{cases}
 4 & \text{if $\f$ is dihedral and has property $\mathcal{P}$}; \\
 3 & \text{if $\f$ is dihedral but does not have property $\mathcal{P}$}; \\
 2 & \text{if $\f$ is non-dihedral}.
   \end{cases}
\end{equation}

Assume that both  $\f$ and $\g$  are dihedral and  can be induced from the same quadratic extension of $F$. If $\f$ and $\g$  are  twist-equivalent, then
\begin{equation}\label{sameQ-twist-equivalent}
\delta_{\f\times\g}
=\delta_{\f\times\f}
=
\begin{cases}
 4 & \text{if $\f$ has property $\mathcal{P}$}; \\
 3 & \text{if $\f$ does not have property $\mathcal{P}$};
   \end{cases}
\end{equation}
if  $\f$ and $\g$  are  not twist-equivalent, then
\begin{equation}
\delta_{\f\times\g}
=
\begin{cases}
 2 & \text{if both of $\f$ and $\g$ have property $\mathcal{P}$}; \\
 2 & \text{if exactly one of $\f$ and $\g$ have property $\mathcal{P}$}; \\
 2 & \text{if both $\f$ and $\g$ do not have property $\mathcal{P}$}.
   \end{cases}
\end{equation}

If $\f$ and $\g$  are dihedral representations that  cannot be induced from the same quadratic extension of $F$, then  $\delta_{\f\times\g}=1$.

If exactly one of $\f$ and $\g$ is dihedral, then $\delta_{\f\times\g}=1$. If none of $\f$ and $\g$ is dihedral, then
\begin{equation}\label{both-bob-dih}
\delta_{\f\times\g}
=
\begin{cases}
 2 & \text{if $\f$ and $\g$ are twist-equivalent}; \\
 1 & \text{otherwise}.
   \end{cases}
\end{equation}
\end{thm}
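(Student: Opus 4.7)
The plan is to start from the factorisation derived in Section~\ref{gen},
$$L(s, \Sym^2(\f \times \g)) = L(s,1)\, L(s, \Ad(\f) \times \Ad(\g)),$$
so that, since $L(s,1)=\zeta_F(s)$ has a simple pole at $s=1$, $\delta_{\f \times \g}$ equals $1$ plus the pole order of $L(s, \Ad(\f) \times \Ad(\g))$ at $s=1$. I would then invoke the Jacquet--Shalika principle: if $\pi_1 = \boxplus_i \sigma_i$ and $\pi_2 = \boxplus_j \tau_j$ are isobaric sums of unitary cuspidal automorphic representations, then the order of the pole of $L(s, \pi_1 \times \pi_2)$ at $s=1$ equals $\#\{(i,j) : \sigma_i \simeq \tau_j^{\vee}\}$. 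Since $\f, \g$ have trivial central character, every isobaric constituent of $\Ad(\f)$ and $\Ad(\g)$ is self-dual, so the count simplifies to \emph{identical} constituents.

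Next I would substitute the Gelbart--Jacquet and dihedral decompositions recalled just before the theorem statement: if $\pi$ is non-dihedral then $\Ad(\pi)$ is cuspidal on $\GL_3(\A_F)$; if $\pi$ is dihedral with property $\mathcal{P}$ then $\Ad(\pi) \simeq \chi \boxplus \mu \boxplus \mu\chi$ (three Hecke characters with $\mu = \nu/\nu^{\tau}$); and if $\pi$ is dihedral without property $\mathcal{P}$ then $\Ad(\pi) \simeq \chi \boxplus I_K^F(\mu)$, with the second summand cuspidal on $\GL_2(\A_F)$. Since a match can only occur between constituents of the same rank, in each case of the theorem I would enumerate the constituents of $\Ad(\f)$ and $\Ad(\g)$, discard impossible pairings by rank, and tally the remaining coincidences.

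To execute the case analysis, two inputs are crucial. First, Ramakrishnan's criterion (\cite[Theorem 4.1.2]{Ramakrishnan}) states that $\Ad(\f) \simeq \Ad(\g)$ if and only if $\f$ and $\g$ are twist-equivalent; this settles \eqref{sameQ-twist-equivalent} by reduction to \eqref{ff}, as well as the twist-equivalent part of \eqref{both-bob-dih}. Second, when $\f, \g$ are both induced from the same $K/F$ with $\mu_i = \nu_i/\nu_i^{\tau}$, an identity of the form $\mu_1 = \mu_2 \chi^{\epsilon}$ as characters of $F$, combined with $\chi|_{K} = 1$, forces $\nu_1/\nu_2$ to be $\tau$-invariant on $K$, hence $\nu_1 = \nu_2 (\alpha \circ N_{K/F})$ for some Hecke character $\alpha$ of $F$, so $\f \simeq \g \otimes \alpha$. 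This rules out every potential character coincidence beyond the inevitable $\chi = \chi$ in the ``same extension, not twist-equivalent'' subcases and yields $\delta_{\f \times \g} = 2$.

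The main obstacle is the case of two dihedral forms induced from different quadratic extensions $K \neq L$, where one must rule out accidental coincidences such as $\mu_1 = \chi_L$ or $\mu_1\chi_K = \chi_L$. The key observation is that a dihedral representation $\pi$ is induced from a quadratic extension $E$ of $F$ if and only if $\chi_E$ appears as an isobaric constituent of $\Ad(\pi)$. Consequently, any such coincidence would exhibit $\chi_L$ inside $\Ad(\f)$, forcing $\f$ to be induced from $L$ and contradicting the hypothesis that $\f, \g$ cannot be induced from a common quadratic extension. The mixed cases (exactly one of $\f, \g$ dihedral) are then immediate from rank mismatches between $\GL_1, \GL_2, \GL_3$, and assembling the pole-order counts across all the cases produces the values of $\delta_{\f \times \g}$ recorded in Theorem~\ref{delta_fg}.
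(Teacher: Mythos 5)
Your strategy---reduce to the pole order of $L(s,\Ad(\f)\times\Ad(\g))$ at $s=1$ and count coinciding isobaric constituents via Jacquet--Shalika---is a genuinely different and more uniform route than the paper's. The paper instead mixes three separate devices: Ramakrishnan's cuspidality criterion for $\f\times\g$ to handle the ``different quadratic extension'' case in one stroke, a citation of Walji's Lemma~6 for the ``same extension, not twist-equivalent, both $\mathcal{P}$'' case, and a citation of \cite[Sec.~3.1.1]{PJW} for the pole order of $L(s,\Ad(\pi)\times\Ad(\pi))$ in the $\f=\g$ dihedral cases. Your approach would, if completed, replace all of these citations with a single self-contained counting argument, which is a real gain in transparency; your direct treatment of the ``same $K$, not twist-equivalent, both $\mathcal{P}$'' subcase (via $\tau$-invariance of $\nu_1/\nu_2$ and Hilbert~90) is, in particular, cleaner than citing Walji.

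However, there is a genuine gap in your treatment of the case where $\f$ and $\g$ are dihedral, induced from \emph{distinct} quadratic extensions $K$ and $L$. Your key observation that $\pi$ is induced from $E$ if and only if $\chi_E$ is a constituent of $\Ad(\pi)$ rules out matches of the form $\mu_1=\chi_L$, $\mu_1\chi_K=\chi_L$, $\chi_K=\mu_2$, etc., but it says nothing about matches in which \emph{neither} side is the quadratic character $\chi_K$ or $\chi_L$. Concretely, if both $\f$ and $\g$ have property $\mathcal{P}$, your argument does not exclude $\mu_1=\mu_2$ or $\mu_1=\mu_2\chi_L$ (these are not quadratic characters, so the criterion does not apply); and if neither has property $\mathcal{P}$, it does not exclude an accidental isomorphism $I_K^F(\nu_1/\nu_1^{\tau})\simeq I_L^F(\psi_1/\psi_1^{\tau})$ of the two $\GL_2$-cuspidal constituents. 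The paper's route here is precisely designed to sidestep this enumeration: Ramakrishnan's cuspidality criterion, recalled as item~(iii) just before the theorem, gives that $\f\times\g$ is cuspidal when $\f,\g$ cannot be induced from a common quadratic extension, so $L(s,\f\times\g\times\f\times\g)$ has a simple pole at $s=1$, and $\delta_{\f\times\g}=1$ follows immediately without any constituent bookkeeping. You should either invoke that criterion for this case or supply the additional arguments ruling out the matches above; as written, this subcase is not closed.
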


\begin{proof}
Recall that
$$
L(s,\f\times \g \times  \f\times \g )=  L(s, \Sym^2 ( \f\times \g) ) L(s, \wedge^2 ( \f\times \g) )
$$
and
$$
L(s, \Sym^2 ( \f\times \g) ) = L(s, 1 ) L(s,  \Ad (\f) \times  \Ad (\g) ).
$$

We start with noting that from (ii), it follows that  $\Ad (\f) \simeq \Ad (\g) $ if and only if $\f$ and $\g$ are twist-equivalent (this includes the case that $\f=\g$). Hence, in the case that both $\f$ and $\g$ are non-dihedral (which implies that  $\Ad (\f)$ and $\Ad (\g) $ are cuspidal),  the order of the pole of $L(s, \Sym^2 ( \f\times \g) )= L(s, 1 ) L(s,  \Ad (\f) \times  \Ad (\g) )$ at $s=1$ is 1, if  $\f$ and $\g$ are not twist-equivalent, and is 2 otherwise.
This proves the last case of \eqref{ff} and also \eqref{both-bob-dih}. I

Secondly, if exactly one of $\f$ and $\g$ is non-dihedral, by (i), \eqref{Ad-decomp-pi1-P}, and \eqref{Ad-decomp-pi2-NP}, we know that $L(s,  \Ad (\f) \times  \Ad (\g) )$ is holomorphic at $s=1$. For such an instance, $L(s, \Sym^2 ( \f\times \g) )$  has only simple pole at $s=1$, which verifies the second last assertion.

Now, it remains to consider the case that both $\f$ and $\g$ are dihedral. By (iii), if $\f$ and $\g$ cannot be induced from the same quadratic extension of $F$, then $\f\times \g$ is cuspidal, and thus $L(s,\f\times \g \times  \f\times \g )$ has only simple pole at $s=1$. This forces that $\delta_{\f\times\g}=1$. So, in the remaining part of proof, we may assume $\f$ and $\g$ can be induced from the same quadratic extension $K$ of $F$.

Firstly, if $\f$ and $\g$ are twist-equivalent (again, including the case that $\f=\g$), by (ii), $\Ad (\f) \simeq\Ad(\g)$. Furthermore, we recall that in \cite[Sec. 3.1.1]{PJW}, it was shown that for a dihedral representation $\pi$, $L(s,  \Ad (\pi) \times  \Ad (\pi) )$ has a pole of order  3  at $s=1$ if $\pi$ has property $\mathcal{P}$; otherwise, the order is 2. This, together with the decomposition $L(s, \Sym^2 ( \f\times \g) ) = L(s, 1 ) L(s,  \Ad (\f) \times  \Ad (\g) )$, proves the first two cases of \eqref{ff} as well as \eqref{sameQ-twist-equivalent}.

Finally, for the remaining cases, namely, $\f$ and $\g$ are non-twist-equivalent dihedral cuspidal representations that can be induced from the same quadratic extension $K$ of $F$, we let $\nu$ and $\psi$ denote Hecke characters of $K$ that induce $\f$ and $\g$, respectively. As before, the non-trivial element of $\Gal(K/F)$ will be denoted by  $\tau$, and $\chi$ is the Hecke character associated to $K/F$. (We recall that in this case, $\Ad (\f) \not\simeq \Ad (\g) $.)

(a) If both  $\f$ and $\g$ have property $\mathcal{P}$,  Walji \cite[Lemma 6]{Walji}  showed that $L(s,  \Ad (\f) \times  \Ad (\g) )$ has a pole of order 1 at $s=1$. Hence, $\delta_{\f\times\g}=2$. 

(b) By a dimension consideration, if exactly one of $\f$ and $\g$ has property $\mathcal{P}$, it is clear that  $L(s,  \Ad (\f) \times  \Ad (\g) )$ has a pole of order 1 at $s=1$, and thus $\delta_{\f\times\g}=2$. (Cf. \cite[Sec. 3.1.1]{PJW}.)

(c) Assume that none of  $\f$ and $\g$ has property $\mathcal{P}$. We note that as $\Ad (\g)$ is self-dual, $\overline{\chi}=\chi$ and $\overline{I_{K}^F(\psi/\psi^{\tau})}=I_{K}^F(\psi/\psi^{\tau})$. Therefore, from the fact  $\Ad (\f) \not\simeq \Ad (\g) $, it follows that $I_{K}^F(\nu/\nu^{\tau}) \not\simeq  \overline{I_{K}^F(\psi/\psi^{\tau})}$, and we conclude that $L(s,  \Ad (\f) \times  \Ad (\g) )$ has a pole of order 1 at $s=1$. This gives  $\delta_{\f\times\g}=2$ and completes the proof.
\end{proof}

\section{Preliminaries on Hilbert modular forms}

\subsection{Number field setup}

Let $F$ be a totally real number field $F$ with $[F:\mathbb{Q}]=n$, and ring of integers $\ringO_{F}$. We denote by $\d_{F}$ and $d_{F}$ the different ideal of $F$ and its norm over $\mathbb{Q}$ respectively.

We let  $\sigma_{j}:x\mapsto x_{j}$ for $j=1,\dots,n$ be the real embeddings of $F$. 
We identify an element $x\in F$ with the $n$-tuple $(x_{1},\dots,x_{n})$ in $\mathbb{R}^{n}$ where $x_j=\sigma_j(x)$. A field element $x$ is said to be totally positive (written as $x\gg0$) if $x_{j}>0$ for all $j$. For any subset $X\subset F$, we denote by $X^{+}$ the set of totally positive elements  in $X$.

To simplify the exposition of this paper, we shall frequently use the following multi-index notation.  For a given $n$-tuple $x=(x_1, \dots, x_n)$, 
we set 
\[ \norm(x)=\prod_{j=1}^n x_j \quad \text{and}\quad \trace(x)=\sum_{j=1}^n x_j.\]
Notice that if $x$ represents an element of the field $F$, the notation above agrees with the usual definitions of the field norm $\norm_{F/\Q}(x)=\prod_j \sigma_j(x)$ and trace $\trace_{F/\Q}(x)=\sum_j \sigma_j(x)$. 
We also note that certain products of the gamma function and the $J$-Bessel function are represented as follows: For given $n$-tuples $x$ and $k$, and for a scalar $a$, we have
\[ \Gamma(x+a)=\prod_{j=1}^n \Gamma(x_j+a)\quad \text{and}\quad J_{k-1}(x)=\prod_{j=1}^n J_{k_j-1}(x_j).\]

The narrow class group of $F$ is denoted by $Cl^{+}(F)$ and its cardinality by $h^{+}=h^{+}_F$. We fix a choice  $\{\a_{1},\a_{2},\dots,\a_{h^{+}}\}$ of representatives of the narrow ideal classes in $Cl^{+}(F)$. We write $\a\sim\b$ when fractional ideals $\a$ and $\b$ belong to the same narrow ideal class, in which case we have $\a=\xi\b$ for some $\xi$ in $F^{+}$. The symbol $[\a\b^{-1}]$ is used to refer to this element $\xi$ which is unique up to multiplication by totally positive units in $\ringO_{F}$.

Let $\mathbb{A}_{F}$ be the ring of ad\`eles of $F$, and let $F_v$ be the completion of $F$ at a place $v$ of $F$. For a non-archimedean place $v$, we denote by $\ringO_v$ the local ring of integers. Furthermore, we let $F_\infty=\prod_{v|\infty} F_v$, where the product is taken over all archimedean places of $F$. In what follows, we make the identifications 
$$F_{\infty}=\mathbb{R}^{n},\quad \GL_{2}^{+}(F_{\infty})=\GL_{2}^{+}(\mathbb{R})^{n},\quad\text{and}\quad  \SO_{2}(F_{\infty})=\SO_{2}(\mathbb{R})^{n}.$$
In particular, each $r\in \SO_{2}(F_{\infty})$ can be expressed as 
$$r({\theta})=\left(r(\theta_{1}),\dots,r(\theta_n)\right)=\left(\left[ \begin{array}{cc}
\cos\theta_{j} & \sin\theta_{j}  \\ 
-\sin\theta_{j} & \cos\theta_{j} \end{array} \right]\right)_{j=1}^{n}.$$ 

For an ideal $\n\subset\ringO_{F}$ and a non-archimedean place $v$ in $F$, we define the subgroup $K_{v}(\n)$ of $\GL_{2}(F_{v})$ as 
$$K_{v}(\n)=\left\{\left[ \begin{array}{cc}
a & b  \\
c & d \end{array} \right]\in \GL_{2}(\ringO_{v}): c\in \n \ringO_v\right\},$$ 
and put
\begin{equation*}
K_{0}(\n)=\prod_{v<\infty}K_{v}(\n).
\end{equation*}

\subsection{Hilbert cusp forms}

Let $k=(k_1,\dots, k_n)\in 2\mathbb{N}^{n}$, and let $\n$ be an integral ideal in $F$.   An (ad\`elic) Hilbert cusp form of weight $k$, level $\n$, and with the trivial character is a function $\f:\GL_{2}(\mathbb{A}_F)\rightarrow\mathbb{C}$ that satisfies the following conditions (\cite[Definition~3.1]{trotabas}):
\begin{enumerate}
\item The identity $\f(\gamma xgr({\theta})u)=\f(g)\exp(i\bk{\theta})$ holds for all $\gamma\in\GL_{2}(F)$, $x\in\mathbb{A}_{F}^{\times}$, $g\in\GL_{2}(\mathbb{A}_{F})$, $r(\theta)\in\SO_{2}(F_{\infty})$, and $u\in K_{0}(\n)$.
\item Viewed as a smooth function on $\GL_{2}^{+}(F_{\infty})$, $\f$ is an eigenfunction of the Casimir element $\Delta:=(\Delta_{1},\dots,\Delta_{n})$ with eigenvalue $\displaystyle{\prod_{j=1}^{n}\frac{k_j}{2}\left(1-\frac{k_j}{2}\right)}$.
\item We have the cuspidality condition: $\displaystyle{\int_{F\backslash\mathbb{A}_{F}}\f\left(\left[ \begin{array}{cc}
1 & x  \\
0 & 1 \end{array} \right]g\right)dx=0}$ for all $g\in \GL_{2}(\mathbb{A}_{F})$.
\end{enumerate} 
The space of ad\'elic Hilbert cusp forms of weight $k$, level $\n$, and with the trivial character  is denoted by $S_{k}(\n)$

Next, we introduce some notation which is needed to define the Fourier coefficients of a Hilbert cusp form $\f$. By the Iwasawa decomposition, any element $g$ in $\GL_{2}^{+}(F_{\infty})$ can be uniquely expressed as 
$$g=\left[ \begin{array}{cc}
{z} & 0  \\
0 & {z} \end{array} \right]\left[ \begin{array}{cc}
1 & {x}  \\
0 & 1 \end{array} \right]\left[ \begin{array}{cc}
{y} & 0  \\
0 & 1 \end{array} \right]r({\theta}),$$ 
with ${z}, {y}\in F_{\infty}^{+}$, ${x}\in F_{\infty}$, and $r({\theta})\in \SO_{2}(F_{\infty})$. Using this decomposition, we define $W_{\infty}^{0}(g)$ by
$$W_{\infty}^{0}(g)={y}^{\frac{k}{2}}\exp\left(2\pi i ({x}+i{y})\right)\exp\left(i\bk{\theta}\right).$$ In fact, the function $W_{\infty}^{0}$ is the new vector in the Whittaker model of the discrete series representation $\bigotimes_{j}\mathcal{D}(k_{j}-1)$ of $\GL_{2}(F_{\infty})$ (restricted to $\GL_{2}^{+}(F_{\infty})$). 

For $\f\in S_{\bk}(\n)$, $ g\in \GL_{2}^{+}(F_{\infty})$, and $\a\in I(F)$, we have the Fourier expansion 
\begin{equation*}
\f\left(g\left[ \begin{array}{cc}
\mathrm{id}(\a\d_F^{-1}) & 0  \\
0 & 1 \end{array} \right] \right)=\sum_{\nu\in(\a^{-1})^{+}}\frac{C(\nu,\a\d_F^{-1},\f)}{\norm(\nu\a)^{\frac{1}{2}}}W_{\infty}^{0}\left(\left[\begin{array}{cc}
\mathbf{\nu} & 0  \\
0 & 1 \end{array} \right] g\right),\end{equation*}
where $\mathrm{id}(\a\d_F^{-1})$ is the idele of $F$ associated with the ideal $\a\d_F^{-1}$. 
The Fourier coefficient of $\f$ at any integral ideal $\m$ in $\ringO_F$ is then defined as $C_\f(\m)=C(\nu,\a\d_F^{-1},\f)$, where $\a$ is a unique choice of representative in $\{\a_1,\dots,\a_{h^{+}}\}$ such that $\m\sim\a$, and $\nu=[\m\a^{-1}]$. Notice that $\nu$ is necessarily an element in $(\a^{-1})^+$, unique up to multiplication by $\ringO_F^{\times +}$. We say $\f$ is normalized if $C_{\f}(\ringO_{F})=1$.

As it is well-known, an ad\'elic Hilbert cusp form $\f$ in $S_{k}(\n)$ can be realized as an $h$-tuple $(f_{1},\dots,f_{h^+})$ of classical Hilbert cusp forms $f_{\nu}$ of weight $k$ with respect to the congruence subgroup
\[\Gamma_\nu(\n)=\left\{\left[\begin{array}{cc} a&b\\c&d\end{array}\right]\in\GL_2^+(F)\,:\, a, d\in\ringO_F, b\in\a_{\nu}\d_F^{-1},c\in\n\a_{\nu}^{-1}\d_F,ad-bc\in\ringO_F^{\times +}\right\},\]
where $\d_F$ is the different ideal of $F$.   For more details on this realization, the reader is referred to Garrett \cite[Ch. 1-2]{garrett}, Raghuram-Tanabe \cite[Sec. 4]{JRMS-2011}, Shimura \cite[Sec. 2]{shimura}, and Trotabas \cite[Sec.~3]{trotabas}.

The space of ad\'elic Hilbert cusp forms can be decomposed as $S_{k}(\n)=S_{k}^{\mathrm{old}}(\n)\oplus S_{k}^{\mathrm{new}}(\n)$ where $S_{k}^{\mathrm{old}}(\n)$ is the subspace of cusp forms that come from lower levels, and the new space $S_{k}^{\mathrm{new}}(\n)$ is the orthogonal complement of $S_{k}^{\mathrm{old}}(\n)$ in $S_{k}(\n)$ with respect to the Petersson inner product defined as
\begin{equation*}
\left<\f,\g\right>_{\n}=\sum_{\nu=1}^{h^+}\left<f_\nu,g_\nu\right>_{\n}=\sum_{\nu=1}^{h^+}\int_{\Gamma_\nu(\n)\backslash\h^n}
\overline{f_\nu(z)}g_\nu(z)y^kd\mu(z),
\end{equation*}
where $\h$ is the upper half-plane,  $y^k=\prod_{j=1}^n y_j^{k_j}$ and $d\mu(z)=\prod_{j=1}^ny_j^{-2}dx_{j}dy_{j}$ with $z_j=x_j+iy_j$.

The space $S_{\bk}(\n)$ has an action of Hecke operators $\{T_{\m}\}_{\m\subset\ringO_{F}}$ much like the classical setting over $\mathbb{Q}$. A Hilbert cusp form $\f$ in $S_{k}(\n)$ is said to be primitive if it is a normalized common Hecke eigenfunction in the new space. We denote by $\Pi_{k}(\n)$ the (finite) set of all primitive forms of weight $k$ and level $\n$. If $\f$ is in $\Pi_{k}(\n)$, it follows from the work of Shimura \cite{shimura} that $C_{\f}(\m)$ is equal to the Hecke eigenvalue for the Hecke operator $T_{\m}$ for all $\m\subset\ringO_{F}$. Moreover, since $\f$ is with the trivial character, the coefficients $C_{\f}(\m)$ are known to be real for all $\m$. In addition, as shown in \cite[Eq. (2.23)]{shimura}, for any primitive form $\f$, one has 
\begin{equation}\label{multi-Hecke}
C_\f(\m)C_\f(\n)=\sum_{\m+\n\subset\a}C_\f(\a^{-2}\m\n).
\end{equation}

The Ramanujan bound for Hilbert modular forms (proven by Blasius \cite{blasius}) asserts that for all $\epsilon>0$, we have
\begin{equation}\label{eqn:ramanujan-bound}
C_{\f}(\m)\ll_{\epsilon}\norm(\m)^{\epsilon}.
\end{equation}

Let $\n$ be square-free. By \cite[Proposition 2.3]{shimura}, writing $\mathfrak{l}\m=\n$, for any integral ideal $\ell\mid \mathfrak{l}$  and $f\in \Pi_{k}(\m)$, there is $\f|_{\ell}\in S_{k}(\n)$ so that $C_{\f|_{\ell}}(\ell\a)=\norm(\ell)^{\frac{1}{2}} C_{\f}(\a)$. Moreover, we have the orthogonal decomposition 
\begin{equation}\label{eqn:decomposition}
S_{k}(\n)=\bigoplus_{\mathfrak{l}\m=\n}\bigoplus_{\f\in \Pi_{k}(\m)} S_{k}(\mathfrak{l},\f),
\end{equation}
where $S_{k}(\mathfrak{l},\f)$ is the space spanned by $\f|_{\ell}$ with $\ell\mid \mathfrak{l}$. Since $\{\f|_{\ell}:\ell\mid \mathfrak{l}\}$ is linearly independent, we can construct an orthogonal basis $\{\f_{\ell}:\ell\mid \mathfrak{l}\}$ from $\{\f|_{\ell}:\ell\mid\mathfrak{l}\}$ of  $S_{k}(\mathfrak{l},\f)$ so that $\left<\f_{\ell},\f_{\ell}\right>_{\n}=\left<\f,\f\right>_{\n}$. In fact, $f_{\ell}$ is given by
\begin{align}\label{eqn:f-ell}
\f_{\ell}(g)&=\left(\frac{N(\ell)}{\rho_{\f}(\ell)}\right)^{\frac12}\sum_{\mathfrak{c}\mathfrak{d}=\ell}\frac{C_{\f}(\mathfrak{c})\mu(\mathfrak{c})}{\nu(\mathfrak{c})}N(\mathfrak{d})^{-\frac12}\f\left(g\left[\begin{array}{cc} \mathrm{id}(\mathfrak{d})^{-1}&0\\0&1\end{array}\right]\right)
\end{align}
where $\rho_{\f}(\mathfrak{c})=\prod_{\mathfrak{p}|\mathfrak{c}} (1-N(\mathfrak{p}) (\frac{C_{\f}(\mathfrak{p})}{1+N(\mathfrak{p})} )^2 )$. This construction \cite[Eq.~(11.1)]{trotabas} generalises
\cite[Eq.~(2.45)]{Iwaniec-Luo-Sarnak} to the setting of Hilbert modular forms over any totally real number field.

\subsection{Petersson trace formula}\label{sec:trace-formula}

The Petersson trace formula for Hilbert modular forms is an important ingredient in the proof of our main result. In this subsection we describe a version of this theorem that is due to Trotabas (see \cite[Theorem~ 5.5 and Proposition~6.3]{trotabas}). For conveniece, we shall first recall the definition and some properties of the $J$-Bessel function and the Kloosterman sum which appear in the statement of this theorem. The $J$-Bessel function is defined via the Mellin-Barnes integral representation as
$$J_{u}(x)=\int_{(\sigma)}\frac{\Gamma\left(\frac{u-s}{2}\right)}{\Gamma\left(\frac{u+s}{2}+1\right)}\left(\frac{x}{2}\right)^{s} ds$$
for $0<\sigma<\Re(u)$. 
It is known that for $k\geq2$ and $x>0$, we have $J_{k-1}(x)\ll\mathrm{min}(1,\frac{x}{k})k^{-\frac13}$.
In particular, if $0\leq\delta\leq1$ we have
\begin{equation}\label{J-Bessel0}
J_{u}(x)\ll \left(\frac{x}{k}\right)^{1-\delta}k^{-\frac13}\quad\quad \text{for }0\leq\delta<1. 
\end{equation} 
 As for the Kloosterman sum, it is defined as follows. Given two fractional ideals $\a$ and $\b$, let $\c$ be an ideal such that $\c^{2}\sim\a\b$. For $\nu\in\a^{-1}$, $\xi\in\b^{-1}$ and $c\in\c^{-1}\q$, the Kloosterman sum $\mathrm{Kl}(\nu,\a;\xi,\b;c,\c)$ is given by 
\[
\mathrm{Kl}(\nu,\a; \xi,\b;c,\c)=\sum_{x\in \left(\a\d_{F}^{-1}\c^{-1}/\a\d_{F}^{-1}c\right)^{\times}}\exp\left(2\pi i\trace\left(\frac{\nu x+\xi\left[\a\b\c^{-2}\right]\overline{x}}{c}\right)\right).
\]
Here, $\overline{x}$ is the unique element in $\left(\a^{-1}\d_{F}\c/\a^{-1}\d_{F}c\c^{2}\right)^{\times}$ such that $x\overline{x}\equiv 1\mod c\c$. The reader is referred to \cite[Sec. 2.2 and 6]{trotabas} for more details on this construction. The Kloosterman sum satisfies the following Weil bound: 
\begin{equation}\label{weilbound}
|\mathrm{Kl}(\alpha,\n;\beta,\m;c,\c)|\ll_{F}\norm\left(((\alpha)\n,(\beta)\m,(c)\c)\right)^{\frac{1}{2}}\tau((c)\c)\norm(c\c)^{\frac{1}{2}},\end{equation} 
where $(\a,\b,\c)$ is the g.c.d of the ideals $\a$, $\b$, $\c$, and $\tau(\n)=|\{\d\subset \ringO_{F}:\n\d^{-1}\subset\ringO_{F}\}|$ for any integral ideal $\n$. Another useful fact is the well-known estimate: for all $\epsilon>0$, we have
\begin{equation*}
\tau(\n)\ll_{\epsilon}\norm(\n)^{\epsilon}.
\end{equation*}

\begin{prop}\label{trace-formula}
For an integral ideal $\n$ in $F$ and $k\in2\mathbb{N}^n$, let $H_{k}(\n)$ be an orthogonal basis for the space $S_{k}(\n)$. Let $\a$ and $\b$ be fractional ideals in $F$. For $\alpha\in\a^{-1}$ and $\beta\in\b^{-1}$, we set 
\begin{equation}\label{eqn:petersson-sum}
\Delta_{k,\n}(\alpha\a,\beta\b)= \sum_{\f\in H_{k}(\n)}\frac{\Gamma(k-1)}{(4\pi)^{\trace(k-1)}|d_{F}|^{1/2}\left<\f,\f\right>_{\n}}C_{\f}(\alpha\a) C_{\f}(\beta\b).
\end{equation}
Then we have
\begin{align*} \Delta_{k,\n}(\alpha\a,\beta\b)
&=\1_{\alpha\a=\beta\b}
+\mathcal{C} \sum_{\substack{\mathfrak{c}^{2}\sim\a\b\\c\in\c^{-1}\n\backslash\{0\}\\\epsilon\in\mathcal{O}_{F}^{\times+}/\mathcal{O}_{F}^{\times2}}}\frac{\mathrm{Kl}(\epsilon\alpha,\a; \beta,\b;c,\c)}{\norm(c\c)}J_{k-1}\left(\frac{4\pi\sqrt{{\epsilon}{\alpha}{\beta}{\left[\mathfrak{a}\mathfrak{b}\mathfrak{c}^{-2}\right]}}}{|{c}|}\right),\end{align*}
where $\dis{\mathcal{C}=\frac{(-1)^{\trace(k/2)}(2\pi)^n}{2|d_F|^{1/2}}}$.
\end{prop}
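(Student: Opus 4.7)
Since Proposition~\ref{trace-formula} is essentially the content of Trotabas~\cite[Theorem~5.5 and Proposition~6.3]{trotabas}, the plan is to recall the standard Petersson argument in the adelic setting and verify that the normalising constants match. The overall strategy is to construct Poincaré series adapted to the pair $(\alpha, \a)$, project them onto $S_k(\n)$, and compare the spectral and geometric expansions of a suitable Fourier coefficient.

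First I would introduce, for each totally positive $\alpha \in \a^{-1}$, an adelic Poincaré series $P_{\alpha, \a}(g)$ of weight $k$ and level $\n$ whose archimedean component is built from the weight-$k$ Whittaker vector $W_\infty^{0}$ tested against $\alpha$, and whose non-archimedean component is the characteristic function of $K_0(\n)$ suitably translated by an idelic representative of $\a\d_F^{-1}$. A standard unfolding argument identifies $\left< P_{\alpha, \a}, \f\right>_\n$ with the $(\alpha, \a\d_F^{-1})$-Fourier coefficient $C(\alpha, \a\d_F^{-1}, \f) = \norm(\alpha\a)^{1/2} C_\f(\alpha\a)$, times the Mellin factor $\Gamma(k-1)/(4\pi)^{\trace(k-1)}$ produced by integrating the archimedean Whittaker function against itself, together with the discriminant $|d_F|^{1/2}$ coming from the self-dual measure on $F\backslash \mathbb{A}_F$. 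Expanding $P_{\alpha, \a}$ in the orthogonal basis $H_k(\n)$ and reading off its $(\beta, \b)$-Fourier coefficient then reproduces exactly the weighted sum $\Delta_{k,\n}(\alpha\a, \beta\b)$ appearing in \eqref{eqn:petersson-sum}.

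The geometric computation of the $(\beta, \b)$-Fourier coefficient of $P_{\alpha, \a}$ proceeds via the Bruhat decomposition $\GL_2(F) = B(F) \sqcup B(F)\, w\, U(F)$. The Borel contribution is non-zero only when $\alpha\a = \beta\b$, producing the Kronecker term $\1_{\alpha\a = \beta\b}$. The Weyl-element contribution is an adelic sum which, after organising the bottom-left entries by their fractional ideals $\c$ (necessarily satisfying $\c^{2}\sim\a\b$), factors as a finite-place piece equal to $\mathrm{Kl}(\epsilon\alpha, \a; \beta, \b; c, \c)$ and an archimedean piece which evaluates to $J_{k-1}(4\pi\sqrt{\epsilon\alpha\beta[\a\b\c^{-2}]}/|c|)$ via the Mellin--Barnes representation recalled in Section~\ref{sec:trace-formula}. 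The parameter $\epsilon$ running over $\ringO_F^{\times +}/\ringO_F^{\times 2}$ arises when one passes from a sum over $F^\times$-cosets to one over totally positive generators of ideals, absorbing the totally positive units modulo squares. Equating the spectral and geometric expressions of the same Fourier coefficient gives the asserted identity.

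The main obstacle is purely bookkeeping rather than analysis: one must track the sign $(-1)^{\trace(k/2)}$ (from the parity of $J_{k-1}$ at each archimedean place, visible after the contour shifts in the Mellin--Barnes integrals), the factor $\tfrac12$ (from quotienting by $\ringO_F^{\times 2}$), the $(2\pi)^{n}$ (from the archimedean Fourier normalisation), and the $|d_F|^{1/2}$ (from the volume of $F\backslash \mathbb{A}_F$) so that the leading constant comes out to exactly $\mathcal{C}=(-1)^{\trace(k/2)}(2\pi)^{n}/(2|d_F|^{1/2})$. Because this accounting is carried out in detail in~\cite{trotabas}, I would simply invoke those results rather than redo the calculation from scratch.
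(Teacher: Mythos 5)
Your proposal is correct and takes the same approach as the paper: Proposition~\ref{trace-formula} is stated without proof, with attribution to Trotabas~\cite[Theorem~5.5 and Proposition~6.3]{trotabas}, which is exactly what you do (your sketch of the Poincaré-series unfolding and Bruhat decomposition is a faithful account of the cited argument, but the paper itself does not reproduce it).
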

We remind the reader that in the proposition above, we have  \[\Gamma(k-1)=\prod_{j=1}^n\Gamma(k_j-1)\] and \[J_{k-1}\left(\frac{4\pi\sqrt{{\epsilon}{\alpha}{\beta}{\left[\mathfrak{a}\mathfrak{b}\mathfrak{c}^{-2}\right]}}}{|{c}|}\right)=\prod_{j=1}^nJ_{k_j-1}\left(\sigma_j\left(\frac{4\pi\sqrt{{\epsilon}{\alpha}{\beta}{\left[\mathfrak{a}\mathfrak{b}\mathfrak{c}^{-2}\right]}}}{|{c}|}\right)\right).\] 

\subsection{Explicit formula for $L$-functions of Hilbert cusp forms}

For ${ \f\in \Pi_{k}(\n)}$, we define $L(s,\f)$ as the absolutely convergent Dirichlet series
\[L(s,\f)=\sum_{\n\subset\ringO_F}\frac{C_{\f}(\n)}{\norm(\n)^s}\] for $\Re(s)>1$. Here, the sum is over non-zero integral ideals $\n$ of $\ringO_{F}$. It is known that $L(s,\f)$ admits an analytic continuation to the entire complex plane. In the notation of Section \ref{AF}, we note that $L(s,\f)$ is an $L$-function satisfying the above formalism with
 \[
 \gamma(s,\f)=\prod_{j=1}^n\pi^{-s}\Gamma\left(\frac{s+\frac{k_j-1}{2}}{2}\right)\Gamma\left(\frac{s+\frac{k_j+1}{2}}{2}\right)
 \]
and  $\Lambda(s,\f)=(\norm(\mathfrak{n}\d_{F}))^{\frac{s}{2}}\gamma(s,\f)L(s,\f)$, where $\d_{F}$ is the different ideal of $F$. In addition, we have 
the functional equation $\Lambda(s,\f)=\epsilon(\f)\Lambda(1-s,\f)$. Notice that here we used the fact that $\f$ is self-dual (which follows from the fact that $\f$ is with trivial character).

Assuming GRH for both $L(s, \wedge^2(\f))=\zeta_{F}(s)$ and $L(s,\Sym^2 (\f))$, we have
$$
\sum_{\norm(\p)\le x} \Lambda_\f(\mathfrak{p}^2) = -x + O(x^{1/2}  \log^2( \norm(\n)\norm(k) x )).
$$
 Hence, applying Proposition \ref{prop:general-explicit-formula}, we arrive at the following proposition.
\begin{prop}\label{prop:explicit-formula}
Let $\f\in\Pi_k(\mathfrak{n})$, assume GRH for both $\zeta_{F}(s)$ and $L(s,\Sym^2 (\f))$. Let $D(\f;\phi)=\sum_{\rho}{\phi}(\frac{\gamma}{2\pi}\log R)$. Then
\begin{align*}
D(\f;\phi)&=\frac{\widehat{\phi}(0)}{\log R}\log(\norm(\mathfrak{n})\norm(k)^2)+\frac12\phi(0)\\&-\frac{2}{\log R}\sum_{\mathfrak{p}\nmid\mathfrak{n}}C_\f(\mathfrak{p})\log\norm(\mathfrak{p})\frac{\widehat{\phi}\left(\frac{\log\norm(\mathfrak{p})}{\log R}\right)}{\norm(\mathfrak{p})^{\frac{1}{2}}}+O\left(\frac{\log\log\left(\norm(\mathfrak{n})\norm(k)\right)}{\log R}\right).
\end{align*}
\end{prop}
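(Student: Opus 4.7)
The statement is a direct specialization of Proposition \ref{prop:general-explicit-formula} to the Hilbert modular form $\f$. The first step is to read off the formal data from the gamma factor
\[
\gamma(s,\f)=\prod_{j=1}^n\pi^{-s}\Gamma\!\left(\frac{s+(k_j-1)/2}{2}\right)\Gamma\!\left(\frac{s+(k_j+1)/2}{2}\right),
\]
identifying $d=2$, $[F:\Bbb Q]=n$, conductor $A_\f=\norm(\n\d_F)$, and the $2n$ Archimedean parameters $\kappa_j = (k_j-1)/2,\ (k_j+1)/2$ for $j=1,\ldots,n$. Everything else is bookkeeping.

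The next step is to simplify the two non-prime terms. Since $\prod_{j=1}^n \tfrac{(k_j-1)(k_j+1)}{4} = \tfrac{\norm(k)^2}{4^n}\prod_{j=1}^n(1-k_j^{-2})$ and $\log\norm(\d_F)=O(1)$, one finds
\[
\log\!\left(\pi^{-2n}A_\f\prod_{j=1}^{2n}\kappa_j\right) = \log\!\big(\norm(\n)\norm(k)^2\big) + O(1),
\]
with the $O(1)$ absorbed into the final error. For $\delta_\f$ one uses that, because $\f$ has trivial central character, $\wedge^2(\f)=1$ and hence $L(s,\f\times\f)=\zeta_F(s)L(s,\Sym^2\f)$; the Jacquet--Shalika simple pole of $L(s,\f\times\f)$ at $s=1$ then forces $L(s,\Sym^2\f)$ to be holomorphic and non-vanishing there. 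Combined with GRH for both $\zeta_F$ and $L(s,\Sym^2\f)$, the explicit formula (or a standard Perron/contour argument applied to $\sum_\p \Lambda_\f(\p^2)\norm(\p)^{-s}$, which differs from $-\zeta_F'/\zeta_F - L'/L(s,\Sym^2\f)$ by a function analytic for $\Re(s)>\tfrac12$) yields $\delta_\f=-1$; this is the very asymptotic recorded just above the statement, so that $-\tfrac{\delta_\f}{2}\phi(0)=+\tfrac12\phi(0)$.

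It remains to restrict the prime sum to $\p\nmid\n$. Since $\Lambda_\f(\p)=C_\f(\p)\log\norm(\p)$ at every prime, the unramified part already gives exactly the sum appearing in the statement. For the ramified primes $\p\mid\n$, squarefreeness of $\n$ and primitivity of $\f$ imply that the local factor at $\p$ has degree one with Satake parameter of modulus $\norm(\p)^{-1/2}$, so $|C_\f(\p)|=\norm(\p)^{-1/2}$; hence
\[
\frac{2}{\log R}\sum_{\p\mid\n}\frac{C_\f(\p)\log\norm(\p)}{\sqrt{\norm(\p)}}\,\widehat{\phi}\!\left(\frac{\log\norm(\p)}{\log R}\right)\ \ll\ \frac{1}{\log R}\sum_{\p\mid\n}\frac{\log\norm(\p)}{\norm(\p)}\ \ll\ \frac{\log\log\norm(\n)}{\log R},
\]
which is absorbed into the stated error $O(\log\log(\norm(\n)\norm(k))/\log R)$. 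The conceptually delicate point is the identification $\delta_\f=-1$, but this is precisely what the hypothesis GRH for $\zeta_F$ and $L(s,\Sym^2\f)$ supplies via the factorisation above, so no additional obstacle arises and the proposition follows at once from Proposition \ref{prop:general-explicit-formula}.
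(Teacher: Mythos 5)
Your overall strategy coincides with the paper's: the result is read off from Proposition~\ref{prop:general-explicit-formula} after specialising the archimedean data and conductor, computing $\delta_\f$ via the factorisation $L(s,\f\times\f)=\zeta_F(s)L(s,\Sym^2\f)$ (using $\wedge^2\f=1$ for trivial central character), and discarding the ramified primes. The paper's proof is terse (it just records the Chebyshev asymptotic $\sum_{\norm(\p)\le x}\Lambda_\f(\p^2)=-x+O(x^{1/2}\log^2(\norm(\n)\norm(k)x))$ and invokes the general explicit formula), so the extra bookkeeping you supply for the gamma factor, conductor, and ramified primes is welcome and correct: in particular, for $\p\mid\n$ with $\n$ squarefree and $\f$ a newform, the local factor is degree one with $|C_\f(\p)|=\norm(\p)^{-1/2}$, so the ramified contribution is $\ll\sum_{\p\mid\n}\log\norm(\p)/\norm(\p)\ll\log\log\norm(\n)$, which sits inside the error.

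One slip to flag: in your parenthetical Perron remark you write that $\sum_\p\Lambda_\f(\p^2)\norm(\p)^{-s}$ differs from $-\zeta_F'/\zeta_F - L'/L(s,\Sym^2\f)$ by a function holomorphic for $\Re(s)>\tfrac12$. The correct identity is $\Lambda_\f(\p^2)=\log\norm(\p)(\alpha_1^2+\alpha_2^2)=\Lambda_{\Sym^2\f}(\p)-\log\norm(\p)$ (since $\alpha_1\alpha_2=1$), so the Dirichlet series differs from $-L'/L(s,\Sym^2\f)+\zeta_F'/\zeta_F(s)$, not $-L'/L(s,\Sym^2\f)-\zeta_F'/\zeta_F(s)$. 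The version you wrote has a simple pole of residue $+1$ at $s=1$ (since $-\zeta_F'/\zeta_F$ has residue $+1$ there and $L(s,\Sym^2\f)$ is holomorphic and non-zero at $s=1$), which would give $\delta_\f=+1$ and hence $-\tfrac{\delta_\f}{2}\phi(0)=-\tfrac12\phi(0)$, the wrong sign. Your stated conclusion $\delta_\f=-1$ (hence $+\tfrac12\phi(0)$) is what the corrected identity yields and matches the paper, so the slip is evidently a typo in the parenthetical, but it should be fixed so the displayed decomposition is consistent with the conclusion it is supposed to justify.
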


\subsection{Explicit formula for Rankin-Selberg convolutions of Hilbert cusp forms}
In this section, we shall restrict our attention to Rankin-Selberg convolutions of Hilbert modular forms and to apply Proposition
\ref{prop:general-explicit-formula} to derive an explicit formula that will be used later.

Let ${ \f\in \Pi_{k}(\n)}$ and  ${\g\in \Pi_{k'}(\n')}$ be primitive forms, where we assume that $\n$ and $\n'$ are coprime integral ideals. 
The $L$-function for the associated Rankin-Selberg convolution is defined as \begin{equation}\label{eqn:rankin-selberg}
L(s,\f\times \g)=\zeta_{F}^{\n\n'}(2s)\sum_{\m\subset \ringO_{F}}\frac{C_{\f}(\m)C_{\g}(\m)}{\norm(\m)^{s}},
\end{equation}
where $$\zeta_{F}^{\n\n'}(2s)=\zeta_{F}(2s)\prod_{\substack{\mathfrak{l}|\n\n' \\ \mathfrak{l}\, \mathrm{prime}}}(1-\norm(\mathfrak{l})^{-2s}).$$
The series \eqref{eqn:rankin-selberg} is absolutely convergent for $\Re(s)>1$. Let
$$
\Lambda(s,\f\times \g)=\norm(\d_{F}^{2}\n\n')^{s}L_{\infty}(s,\f\times\g)L(s,\f\times \g),
$$
where 
\begin{align*}
&L_{\infty}(\f\times\g,s)\\
&=\prod_{j=1}^n(2\pi)^{-2s-\mathrm{max}\{k_{j}, k'_j\}}\Gamma\left(s+\frac{|k_j-k'_j|}{2}\right)\Gamma\left(s-1+\frac{k_j+k'_j}{2}\right)\\&=\prod_{j=1}^{n}\frac{\pi^{-2s-\max(k_j,k_j')}}{8\pi}\Gamma\Bigg(\frac{s+\frac{|k_j-k_j'|}{2}}{2}\Bigg)\Gamma\Bigg(\frac{s+1+\frac{|k_j-k_j'|}{2}}{2}\Bigg)\Gamma\Bigg(\frac{s+\frac{k_j+k_j'}{2}}{2}\Bigg)\Gamma\Bigg(\frac{s-1+\frac{k_j+k_j'}{2}}{2}\Bigg).
\end{align*} Then $\Lambda(s,\f\otimes \g)$ admits an analytic continuation to $\mathbb{C}$ as an entire function (unless $\f=\g$) and satisfies the functional equation 
$
\Lambda(s,\f\times\g)=\Lambda(1-s,\f\times \g)
$ (see \cite{prasad-ramakrishnan}).

Applying Proposition \ref{prop:general-explicit-formula} and using \eqref{eqn:pnt-f-g} yield the following result.

\begin{prop}\label{prop:fg-explicit-formula}
Let $\f\in\Pi_{k}(\n)$ and $\g\in\Pi_{k'}(\n')$ with $(\n,\n')=1$. Assuming GRH for both $L(s, \Sym^2 ( \f\times \g) )$ and $L(s, \wedge^2 ( \f\times \g) )$, we get
\begin{align*}
D(\f\times \g;\phi)
&=\frac{\widehat{\phi}(0)}{\log R}\log\Bigg( \norm(\n\n')^{2}\prod_{j=1}^{{n}}(|k_j-k_j'|+1)^2(k_j+k_j')^2\Bigg)
-\frac{\delta_{\f\times \g}}{2}\phi(0)\\
&-\frac{2}{\log R}\sum_{\mathfrak{p}\nmid \n\n'}C_{\f}(\mathfrak{p})C_{\g}(\p)\log\norm(\mathfrak{p})\frac{\widehat{\phi}\left(\frac{\log\norm(\mathfrak{p})}{\log R}\right)}{\norm(\mathfrak{p})^{\frac{1}{2}}}
+O\left(\frac{\log\log\left( \norm(kk')\norm(\n\n')\right)}{\log R}\right).
\end{align*}
\end{prop}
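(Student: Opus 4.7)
The plan is to apply Proposition \ref{prop:general-explicit-formula} to the self-dual Rankin-Selberg $L$-function $L(s,\f\times\g)$, which is an $L$-function of degree $d=4$ over $F$ in the formalism of Section \ref{AF}. The self-duality is inherited from the fact that both $\f$ and $\g$ have trivial central character. Under the assumed GRH for $L(s,\Sym^2(\f\times\g))$ and $L(s,\wedge^2(\f\times\g))$, the required second-moment prime asymptotic is exactly the content of \eqref{eqn:pnt-f-g}, so all hypotheses of Proposition \ref{prop:general-explicit-formula} are met with $\delta_f=\delta_{\f\times\g}$.

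From the given completed $L$-function $\Lambda(s,\f\times\g)=\norm(\d_F^{2}\n\n')^{s}L_{\infty}(s,\f\times\g)L(s,\f\times\g)$, I would read off $A_f=\norm(\d_F^{2}\n\n')^{2}$, so that $\log A_f=2\log\norm(\n\n')+O(1)$ (the $O(1)$ absorbing the contribution from the fixed field $F$). From $L_{\infty}$ I would identify the $4n$ archimedean shift parameters $\kappa_{j,1}=\frac{|k_j-k_j'|}{2}$, $\kappa_{j,2}=1+\frac{|k_j-k_j'|}{2}$, $\kappa_{j,3}=\frac{k_j+k_j'}{2}$, $\kappa_{j,4}=-1+\frac{k_j+k_j'}{2}$. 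Their product, up to a bounded multiplicative constant, equals $\prod_{j}(|k_j-k_j'|+1)^{2}(k_j+k_j')^{2}/16^{n}$, so taking logarithms yields the stated archimedean term, with the harmless powers of $\pi$, $16$, and $\norm(\d_F)$ absorbed into the final error.

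For the prime sum, I would use the fact that at any unramified prime $\p\nmid\n\n'$ the local Euler factor of $L(s,\f\times\g)$ has inverse roots $\alpha_{\f,i}(\p)\alpha_{\g,j}(\p)$, so $\Lambda_{\f\times\g}(\p)=C_{\f}(\p)C_{\g}(\p)\log\norm(\p)$. The finitely many ramified primes dividing $\n\n'$ each contribute $O(\log\norm(\p)/\sqrt{\norm(\p)})$ and are absorbed into the error term. Substituting this into the general prime-sum term of Proposition \ref{prop:general-explicit-formula} produces exactly the claimed sum over $\p\nmid\n\n'$.

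The main technical point is the careful treatment of the archimedean factor when some $k_j=k_j'$: in that case $\kappa_{j,1}=0$ falls outside the range $\alpha\ge\frac14$ in which the digamma asymptotic $\frac{\Gamma'}{\Gamma}(\alpha+\tfrac14)=\log\alpha+O(1)$ of \eqref{eqn:digamma-factor} is valid. I would resolve this by applying the asymptotic only to those $\kappa_{j,\ell}$ with $\kappa_{j,\ell}\ge\tfrac12$ and bounding the finitely many remaining digamma values by $O(1)$; this is precisely why $|k_j-k_j'|+1$ (rather than $|k_j-k_j'|$) appears in the stated formula. Finally, since the analytic conductor of the Rankin-Selberg $L$-function satisfies $c_{\f\times\g}\ll\norm(\n\n')\norm(kk')$, the error $O(\log\log c_{\f\times\g}/\log R)$ in Proposition \ref{prop:general-explicit-formula} becomes the stated $O(\log\log(\norm(kk')\norm(\n\n'))/\log R)$, completing the derivation.
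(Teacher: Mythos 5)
Your proposal follows exactly the paper's (one-line) route: apply Proposition~\ref{prop:general-explicit-formula} to $L(s,\f\times\g)$ using \eqref{eqn:pnt-f-g}, identify $A_f=\norm(\d_F^2\n\n')^2$ and the $4n$ shift parameters from $L_\infty$, and observe $\Lambda_{\f\times\g}(\p)=C_\f(\p)C_\g(\p)\log\norm(\p)$ at unramified primes. You were in fact more careful than the paper: the paper's proof of Proposition~\ref{prop:fg-explicit-formula} is literally one sentence, and your explicit discussion of the $\kappa_{j,1}=0$ case (when $k_j=k_j'$), explaining why $|k_j-k_j'|+1$ rather than $|k_j-k_j'|$ appears, is a genuine improvement in clarity.

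One small imprecision worth flagging. To remove the ramified primes $\p\mid\n\n'$ from the prime sum, you bound each by $O(\log\norm(\p)/\sqrt{\norm(\p)})$ via RPC alone. Summed over $\p\mid\n\n'$, this can be as large as $\asymp\sqrt{\log\norm(\n\n')}$, which (after dividing by $\log R$) is $o(1)$ but exceeds the error $O(\log\log(\norm(kk')\norm(\n\n'))/\log R)$ stated in the proposition. To reach the stated error, use that $\n$ and $\n'$ are square-free so $\f$ (resp.\ $\g$) is Steinberg at $\p\mid\n$ (resp.\ $\p\mid\n'$), whence one local Satake parameter vanishes and the other has modulus $\norm(\p)^{-1/2}$; this yields $|\Lambda_{\f\times\g}(\p)|\ll\norm(\p)^{-1/2}\log\norm(\p)$, so the ramified contribution is $\ll\sum_{\p\mid\n\n'}\log\norm(\p)/\norm(\p)\ll\log\log\norm(\n\n')$, matching the claim. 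This does not affect the validity of the downstream theorems, which only require the error to be $o(1)$, but it is needed for the proposition as literally stated.
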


\section{Some technical lemmas}\label{sec:analogues-LM}
In this section, we derive some technical lemmas that allow us to express sums over primitive forms in terms of sums of Petersson type. Similar lemmas have been derived in \cite{Iwaniec-Luo-Sarnak} and adapted by \cite{liu-miller} for the setting of Hilbert modular forms of parallel weight over a totally real number field with class number one. We reproduce these lemmas in the more general setting that we require in this work. Although the proofs require minor modifications from the respective ones in \cite{Iwaniec-Luo-Sarnak}, we include them for the convenience of the reader.
\begin{lemma}\label{lem:norm}
 Let $\f\in\Pi_{k}(\m)$. Then for any  $\m\mid \n$, we have 
\begin{align*}
\left<\f,\f\right>_{\n}
&=\frac{4}{(2\pi)^n}\frac{\Gamma(k)}{(4\pi)^{\trace(k)}}\frac{\zeta_F(2)d_F^{3/2}}{R_F}\nu(\n)\prod_{\p\mid\m}\left(1-\norm(\p)^{-1}\right)Z(1,\f),
\end{align*}
where $\nu(\n)=\norm(\n)\prod_{\p\mid\n}\left(1+\norm(\p)^{-1}\right)$ and $Z(s,\f)=\displaystyle{\sum_{0\neq \q\subset\ringO_{F}}\frac{C_{\f}(\q^2)}{\norm(\q)^s}.}$
\end{lemma}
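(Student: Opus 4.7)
The plan is to reduce the statement to the newform case $\m = \n$ via an index computation, then to invoke the Rankin--Selberg unfolding method to handle the newform case.

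First I would carry out the level-change reduction. Since $\m \mid \n$, one has $\Gamma_\nu(\n) \subseteq \Gamma_\nu(\m)$, and each component $f_\nu$ of $\f$ is already modular for the larger group $\Gamma_\nu(\m)$. A standard unfolding of the integral defining the Petersson product then yields
\[
\left\langle f_\nu, f_\nu \right\rangle_\n = [\Gamma_\nu(\m) : \Gamma_\nu(\n)] \cdot \left\langle f_\nu, f_\nu \right\rangle_\m.
\]
For square-free $\n$, a local computation shows that this index is independent of $\nu$ and equals $\nu(\n)/\nu(\m)$. Summing over $\nu = 1, \dots, h^+$ produces
\[
\left\langle \f, \f \right\rangle_\n = \frac{\nu(\n)}{\nu(\m)} \left\langle \f, \f \right\rangle_\m,
\]
which reduces the lemma to the newform case $\n = \m$.

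Next, for the newform case, I would integrate $|\f|^2 y^k$ against the non-holomorphic Eisenstein series $E(z, s)$ of level $\m$ on each $\Gamma_\nu(\m) \backslash \h^n$ and take the residue at $s = 1$. Unfolding against the cusp at infinity and plugging in the Fourier expansion of $\f$ expresses the integral as the product of archimedean Gamma factors (coming from the Mellin transform of $W_\infty^0$, giving $\Gamma(k)/(4\pi)^{\trace(k)}$) with the Dirichlet series $\sum_\n C_\f(\n)^2/\norm(\n)^s$. Using the Hecke multiplicativity relation \eqref{multi-Hecke}, at unramified $\p$ this Dirichlet series factors as $\zeta_F^{(\m)}(s) \cdot Z^{(\m)}(s,\f)$; at ramified $\p \mid \m$, the identities $C_\f(\p^a) = C_\f(\p)^a$ with $|C_\f(\p)|^2 = \norm(\p)^{-1}$ (valid since $\m$ is square-free and $\f$ has trivial character) supply the finite local correction that converts $Z^{(\m)}$ into the full $Z(s,\f)$ and produces the factor $\prod_{\p\mid\m}(1-\norm(\p)^{-1})$ after taking residues.

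The main obstacle will be the careful bookkeeping of constants coming from four independent sources: (a) the class-number-formula residue of $\zeta_F(s)$ at $s = 1$, which contributes $R_F$, $d_F^{1/2}$, and $h^+$; (b) the hyperbolic volume of $\bigsqcup_\nu \Gamma_\nu(\m) \backslash \h^n$, supplying further powers of $d_F$ and the index $\nu(\m)$; (c) the archimedean Gamma-factor computation, producing the $(2\pi)^n$ and $\Gamma(k)/(4\pi)^{\trace(k)}$ factors; and (d) the $\zeta_F(2)$ factor, which arises from the decomposition $L(s, \f \times \f) = \zeta_F(s) L(s, \Sym^2 \f)$ after extracting the pole at $s=1$. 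Aggregating these contributions over the $h^+$ cusp components and verifying the cancellation of the $h^+$ and $R_F$ dependencies against $d_F^{3/2}/R_F$ in the statement is the technical crux. This formula is the Hilbert-modular generalization of \cite[Lemma 2.5]{Iwaniec-Luo-Sarnak} and \cite[Lemma 4.1]{liu-miller}; the novel ingredients here are the arbitrary narrow class number (requiring the sum over $\nu$) and the non-parallel weights (handled component-wise via the multi-index notation).
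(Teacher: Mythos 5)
Your proposal is correct and follows essentially the same route as the paper: both rest on the Rankin--Selberg unfolding computation that relates $\left<\f,\f\right>_\n$ to $\operatorname{Res}_{s=1}L(s,\f\otimes\f)=Z(1,\f)\prod_{\p\mid\m}(1-\norm(\p)^{-1})$, the only organizational difference being that you first peel off the index factor $\nu(\n)/\nu(\m)$ to reduce to $\m=\n$, whereas the paper cites Shimura's Proposition 4.13 and Eqs.\ (2.27)--(2.31), which already carry out this volume normalization and the unfolding in one stroke. The constant-chasing you flag as the ``technical crux'' is precisely what the citation to Shimura (plus the remark about the differing normalization of the Petersson product) is meant to absorb.
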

\begin{proof}
First, we recall that $\text{Res}_{s=1}L(s,\f\otimes\f)=Z(1,\f)\prod_{\p\mid\m}\left(1-\norm(\p)^{-1}\right)$. The proof now follows from \cite[Proposition~4.13, Eq. (2.27), (2.28), and (2.31)]{shimura} while noting that our definition of the Petersson inner product differs slightly from Shimura's definition. 
\end{proof}
 
\begin{lemma}\label{lem:Delta}
For $(\a,\n)=1$ and $(\b,\n)=1$, we have 
\begin{align}\label{Delta-formula-1}
 \begin{split}
\Delta_{k,\n}(\a,\b)
&=\frac{\mathcal{K}_F}{\norm(\n)\norm(k-1)}
\sum_{\mathfrak{l}\m=\n}\sum_{\f\in \Pi_{k}(\m)}\frac{Z_{\n}(1,\f)}{Z(1,\f)}C_{\f}(\a) C_{\f}(\b)\\
&=\frac{\mathcal{K}_F}{\norm(\n)\norm(k-1)}
\sum_{\mathfrak{l}\m=\n}\sum_{\ell\mid \mathfrak{l}^\infty}\frac{1}{\norm(\ell)}\Delta_{k,\m}^*(\a\ell^2,\b),
  \end{split}
\end{align}
where $\mathcal{K}_F=2^{n-2}(2\pi)^{2n} \frac{R_F}{\zeta_F(2)d_F^2}$,  $Z_{\n}(s,\f)=\displaystyle{\sum_{\q\mid \n^{\infty}}\frac{C_{\f}(\q^2)}{\norm(\q)^s}}$, and 
\begin{equation}\label{eqn:delta-star}\Delta_{k,\m}^*(\a,\b)= \displaystyle{\sum_{\f\in \Pi_{k}(\m)}\frac{Z_{\m}(1,\f)}{Z(1,\f)}C_{\f}(\a) C_{\f}(\b)} .
\end{equation}
Moreover,  we have
\begin{align} \label{delta-formula-3}
\begin{split}
\Delta^*_{k,\n}(\a,\b)
=\mathcal{K}_F^{-1}\norm(k-1)
\sum_{\mathfrak{l}\m=\n}\mu(\mathfrak{l})\norm(\m)\sum_{\ell\mid \mathfrak{l}}\frac{1}{\norm(\ell)}\Delta_{k,\m}(\a\ell^2,\b).
 \end{split}
\end{align}
\end{lemma}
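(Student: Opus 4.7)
\medskip
\noindent\emph{Proof proposal.}

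The first equality in \eqref{Delta-formula-1} is the genuine content of the lemma; the other two identities follow from it by arithmetic manipulations. My plan for the first equality is to exploit that $\Delta_{k,\n}(\a,\b)$, as defined by \eqref{eqn:petersson-sum}, is independent of the choice of orthogonal basis $H_k(\n)$. Thus I would insert the decomposition \eqref{eqn:decomposition} and work with the specific basis $\{\f_\ell : \mathfrak{l}\m=\n,\ \f\in\Pi_k(\m),\ \ell\mid \mathfrak{l}\}$ furnished by \eqref{eqn:f-ell}. Since $\langle \f_\ell,\f_\ell\rangle_\n=\langle \f,\f\rangle_\n$, Lemma~\ref{lem:norm} converts the weight into an explicit constant times $\nu(\n)^{-1}\prod_{\p\mid\m}(1-\norm(\p)^{-1})^{-1}Z(1,\f)^{-1}$, and the claim reduces to showing
\[
\sum_{\ell\mid \mathfrak{l}} C_{\f_\ell}(\a)\,C_{\f_\ell}(\b)\;=\;\text{const}\cdot \nu(\n)\prod_{\p\mid\m}\bigl(1-\norm(\p)^{-1}\bigr)\,Z_\n(1,\f)\,C_\f(\a)C_\f(\b).
\]
Using \eqref{eqn:f-ell}, the Hecke multiplicativity \eqref{multi-Hecke}, and the hypothesis $(\a\b,\n)=1$ (which forces $(\a,\ell)=(\b,\ell)=1$ for every $\ell\mid\mathfrak{l}$), one checks that $C_{\f_\ell}(\a)=r_\ell(\f)\,C_\f(\a)$ with $r_\ell(\f)$ a scalar depending only on $\ell$, $\f$, and the $\rho_\f$-factor; the computation then collapses to a local Euler-product identity at each prime $\p\mid\mathfrak{l}$ that I expect to match exactly the local $\p$-factor of $Z_\mathfrak{l}(1,\f)$, with the Petersson-norm factors absorbing the remaining local corrections.

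The second equality in \eqref{Delta-formula-1} is purely algebraic. Since $\n$ is square-free in our application and $\mathfrak{l}\m=\n$ yields $(\mathfrak{l},\m)=1$, the Euler product for $Z(s,\f)$ at primes of $\mathfrak{l}$ gives $Z_\n(1,\f)=Z_\m(1,\f)\,Z_\mathfrak{l}(1,\f)$. Expanding $Z_\mathfrak{l}(1,\f)=\sum_{\ell\mid\mathfrak{l}^\infty}\norm(\ell)^{-1}C_\f(\ell^2)$ and invoking $C_\f(\a)C_\f(\ell^2)=C_\f(\a\ell^2)$ (a consequence of \eqref{multi-Hecke} since $(\a,\ell)=1$) rewrites the sum as $\sum_{\ell\mid\mathfrak{l}^\infty}\norm(\ell)^{-1}\Delta^*_{k,\m}(\a\ell^2,\b)$, where $\Delta^*_{k,\m}$ is as in \eqref{eqn:delta-star}. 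The inversion \eqref{delta-formula-3} is then obtained by classical Möbius inversion on the lattice of divisors of $\n$: the map $\m\mapsto \sum_{\ell\mid \mathfrak{l}^\infty}\norm(\ell)^{-1}\Delta^*_{k,\m}(\a\ell^2,\b)$ is a Dirichlet-type convolution whose inverse produces the Möbius sum $\sum_{\mathfrak{l}\m=\n}\mu(\mathfrak{l})\norm(\m)\sum_{\ell\mid\mathfrak{l}}\norm(\ell)^{-1}\Delta_{k,\m}(\a\ell^2,\b)$, after the $\ell$-summation truncates from $\ell\mid\mathfrak{l}^\infty$ to $\ell\mid\mathfrak{l}$ against $\mu(\mathfrak{l})$.

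The main obstacle I anticipate is the bookkeeping in the first paragraph: verifying prime-by-prime that the constants $\nu(\n)$, $\prod_{\p\mid\m}(1-\norm(\p)^{-1})$, the normalizations $\rho_\f(\ell)$ appearing in \eqref{eqn:f-ell}, and the Euler factor of $Z_\mathfrak{l}(1,\f)$ at each $\p\mid\mathfrak{l}$ all combine to the clean ratio $\mathcal{K}_F\,\norm(\n)^{-1}\norm(k-1)^{-1}\,Z_\n(1,\f)/Z(1,\f)$. This is the Hilbert-modular analogue of the computation carried out by Iwaniec--Luo--Sarnak (following the classical treatment reproduced in \cite{liu-miller}), and since every step is multiplicative it reduces to a single local identity at a prime $\p\mid\n$, which I expect to verify without essential difficulty.
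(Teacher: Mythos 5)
Your proposal is correct and follows essentially the same route as the paper: decompose via \eqref{eqn:decomposition} into the explicit basis $\{\f_\ell\}$, normalize with Lemma~\ref{lem:norm}, use \eqref{eqn:f-ell-coeff} and multiplicativity, identify the resulting Euler factors with the local formula \eqref{eqn:Zlocal} for $Z_\n(1,\f)$, and finish with M\"obius inversion. The only difference is stylistic — you invoke the coprimality $(\a,\ell)=(\b,\ell)=1$ up front so that $C_{\f_\ell}(\a)=r_\ell(\f)C_\f(\a)$ and the sum over $\ell\mid\mathfrak{l}$ collapses to $\sum_{\ell\mid\mathfrak{l}}r_\ell(\f)^2=\rho_\f(\mathfrak{l})^{-1}$, whereas the paper carries the double $\mathfrak{d}_1,\mathfrak{d}_2$-sum through and applies $(\a\b,\mathfrak{l})=1$ only at the last step; both yield the same local identity.
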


\begin{proof}

We have
\begin{align*}
\Delta_{k,\n}(\a,\b)&=\sum_{\f\in H_{k}(\n)}\frac{\Gamma(k-1)}{(4\pi)^{\trace(k-1)}d_{F}^{1/2}\left<\f,\f\right>_{\n}}C_{\f}(\a) C_{\f}(\b)\\&=
\sum_{\mathfrak{l}\m=\n}\sum_{\f\in \Pi_{k}(\m)}\sum_{\ell|\mathfrak{l}}\frac{\Gamma(k-1)}{(4\pi)^{\trace(k-1)}d_{F}^{1/2}\left<\f_{\ell},\f_{\ell}\right>_{\n}}C_{\f_{\ell}}(\a) C_{\f_{\ell}}(\b)\\&=\frac{R_F}{\nu(\n)\zeta_F(2)d_F^{3/2}}\frac{2^{n-2}(2\pi)^{2n}}{N(k-1)d_{F}^{1/2}}\sum_{\mathfrak{l}\m=\n}\frac{1}{\prod_{\p\mid\m}\left(1-\norm(\p)^{-1}\right)}\sum_{\f\in \Pi_{k}(\m)}\frac{1}{Z(1,\f)}\sum_{\ell|\mathfrak{l}}C_{\f_{\ell}}(\a) C_{\f_{\ell}}(\b).
\end{align*}
Observe that \eqref{eqn:f-ell} gives
\begin{equation}\label{eqn:f-ell-coeff}
C_{\f_{\ell}}(\a)=\left(\frac{N(\ell)}{\rho_{\f}(\ell)}\right)^{\frac12}\sum_{\substack{\c\mathfrak{d}=\ell\\\mathfrak{d}|\a}}\frac{C_{\f}(\c)\mu(\c)}{\nu(\c)}C_{\f}(\a\mathfrak{d}^{-1}).
\end{equation}
Hence, we derive
\begin{align*}
\Delta_{k,\n}(\a,\b)&=\frac{\mathcal{K}_{F}}{\norm(k-1)}\sum_{\mathfrak{l}\m=\n}\frac{1}{\prod_{\p\mid\m}\left(1-\norm(\p)^{-1}\right)}\sum_{\f\in \Pi_{k}(\m)}\frac{1}{Z(1,\f)}\sum_{\ell|\mathfrak{l}}\frac{N(\ell)}{\rho_{\f}(\ell)}\\
&\times\sum_{\substack{\c_1\mathfrak{d}_1=\ell\\\mathfrak{d}_1|\a}}\frac{C_{\f}(\c_1)\mu(\c_1)}{\nu(\c_1)}C_{\f}(\a\mathfrak{d}_1^{-1}) \sum_{\substack{\c_2\mathfrak{d}_2=\ell\\\mathfrak{d}_2|\b}}\frac{C_{\f}(\c_2)\mu(\c_2)}{\nu(\c_2)}C_{\f}(\b\mathfrak{d}_2^{-1}).
\end{align*}
Since $(\a,\n)=(\b,\n)=1$, we observe that  $\mathfrak{d}_1$ and $\mathfrak{d}_2$  in the above summations are necessarily relatively prime. We write $\c_1=\b\mathfrak{d}_2$ and $\c_2=\b\mathfrak{d}_1$ to get
\begin{align*}
\Delta_{k,\n}(\a,\b)
&=\frac{\mathcal{K}_{F}}{\nu(\n)N(k-1)}\sum_{\mathfrak{l}\m=\n}\frac{1}{\prod_{\p\mid\m}\left(1-\norm(\p)^{-1}\right)}\sum_{\f\in \Pi_{k}(\m)}\frac{1}{Z(1,\f)}\sum_{\ell|\mathfrak{l}}\frac{N(\ell)}{\rho_{\f}(\ell)}\\
&\times\sum_{\substack{\b\mathfrak{d}_1\mathfrak{d}_2=\ell\\\mathfrak{d}_1|\a,\mathfrak{d}_2|\b}}\left(\frac{C_{\f}(\b)}{\nu(\b)}\right)^2\frac{\mu(\mathfrak{d}_1\mathfrak{d}_2)}{\nu(\mathfrak{d}_1\mathfrak{d}_2)}C_{\f}(\mathfrak{d}_1\mathfrak{d}_2)C_{\f}(\a\mathfrak{d}_1^{-1})C_{\f}(\b\mathfrak{d}_2^{-1})\\
&=\frac{\mathcal{K}_{F}}{\nu(\n)N(k-1)}\sum_{\mathfrak{l}\m=\n}\frac{1}{\prod_{\p\mid\m}\left(1-\norm(\p)^{-1}\right)}\sum_{\f\in \Pi_{k}(\m)}\frac{1}{Z(1,\f)}\\
&\times\sum_{\substack{\mathfrak{d}_1|\mathfrak{l},\mathfrak{d}_2|\mathfrak{l}\\\mathfrak{d}_1|\a,\mathfrak{d}_2|\b}}\frac{N(\mathfrak{d}_1\mathfrak{d}_2)}{\rho_{\f}(\mathfrak{d}_1\mathfrak{d}_2)}\frac{\mu(\mathfrak{d}_1\mathfrak{d}_2)}{\nu(\mathfrak{d}_1\mathfrak{d}_2)}C_{\f}(\mathfrak{d}_1\mathfrak{d}_2)C_{\f}(\a\mathfrak{d}_1^{-1})C_{\f}(\b\mathfrak{d}_2^{-1})\sum_{\b|\mathfrak{l}\mathfrak{d}_1^{-1}\mathfrak{d}_2^{-1}}\frac{N(\b)}{\rho_{\f}(\b)}\left(\frac{C_{\f}(\b)}{\nu(\b)}\right)^2\\&=\frac{\mathcal{K}_{F}}{\nu(\n)N(k-1)}\sum_{\mathfrak{l}\m=\n}\frac{1}{\prod_{\p\mid\m}\left(1-\norm(\p)^{-1}\right)}\sum_{\f\in \Pi_{k}(\m)}\frac{1}{\rho_{\f}(\mathfrak{l})Z(1,\f)}\\
&\times\sum_{\substack{\mathfrak{d}_1|(\mathfrak{l},\a)\\\mathfrak{d}_2|(\mathfrak{l},\b)}}N(\mathfrak{d}_1\mathfrak{d}_2)\frac{\mu(\mathfrak{d}_1\mathfrak{d}_2)}{\nu(\mathfrak{d}_1\mathfrak{d}_2)}C_{\f}(\mathfrak{d}_1\mathfrak{d}_2)C_{\f}(\a\mathfrak{d}_1^{-1})C_{\f}(\b\mathfrak{d}_2^{-1}).\end{align*}

We set \[A_{\f}(\a,\mathfrak{l})=\sum_{\mathfrak{d}_1|(\mathfrak{l},\a)}\frac{\mu(\mathfrak{d}_1)}{\nu(\mathfrak{d}_1)}N(\mathfrak{d}_1)C_{\f}(\mathfrak{d}_1)C_{\f}(\a\mathfrak{d}_1^{-1}).\] Using the multiplicativity of $C_{\f}$  given in \eqref{multi-Hecke}, we can write
\[A_{\f}(\a,\mathfrak{l})=\frac{1}{\nu((\mathfrak{l},\a))}\sum_{\mathfrak{d}_1^2|(\a,\mathfrak{l}\mathfrak{d}_1)}\mu(\mathfrak{d}_1)N(\mathfrak{d}_1)C_{\f}(\a\mathfrak{d}_1^{-2}).\]
Collecting these observations yields
 \begin{align*}
\Delta_{k,\n}(\a,\b)&=\frac{\mathcal{K}_{F}}{\nu(\n)N(k-1)}\sum_{\mathfrak{l}\m=\n}\frac{N(\m)}{\phi(\m)}\sum_{\f\in \Pi_{k}(\m)}\frac{A_{\f}(\a,\mathfrak{l})A_{\f}(\b,\mathfrak{l})}{\rho_{\f}(\mathfrak{l})Z(1,\f)}.\end{align*}
In fact, if we define
$\displaystyle Z_{\n}(s,\f)=\displaystyle{\sum_{\q\mid \n^{\infty}}\frac{C_{\f}(\q^2)}{\norm(\q)^s}}$, then one can verify that 
$\displaystyle
Z_{\n}(1,\f)=\prod_{\p}Z_\p(1,\f),
$
where
\begin{equation}\label{eqn:Zlocal}
Z_\p(1,\f)
=
\begin{cases}
 (1 +\norm(\p)^{-1})^{-1} \rho(\p)^{-1} & \text{if $\p\nmid \m$}; \\
 (1 +\norm(\p)^{-1})^{-1} (1- \norm(\p)^{-1})^{-1} & \text{if $\p\mid \m$}.
   \end{cases}
\end{equation}
It follows that for $\n$ squarefree and $\n=\m\mathfrak{l}$ we have \[Z_{\n}(1,\f)=\frac{N(\n\m)}{\nu(\n)\phi(\m)\rho_{\f}(\mathfrak{l})},\] and thus
 \begin{align*}
\Delta_{k,\n}(\a,\b)&=\frac{\mathcal{K}_{F}}{N(\n)\norm(k-1)}\sum_{\mathfrak{l}\m=\n}\sum_{\f\in \Pi_{k}(\m)}A_{\f}(\a,\mathfrak{l})A_{\f}(\b,\mathfrak{l})\frac{Z_{\n}(1,\f)}{Z(1,\f)}\\&=\frac{\mathcal{K}_{F}}{N(\n)\norm(k-1)}\sum_{\mathfrak{l}\m=\n}\sum_{\f\in \Pi_{k}(\m)}\frac{C_{\f}(\a)C_{\f}(\b)}{\nu((\a\b,\mathfrak{l}))}\frac{Z_{\n}(1,\f)}{Z(1,\f)},\end{align*}
where for the last equality we used the assumption that $(\a,\n)=(\b,\n)=1$.

The second equality in \eqref{Delta-formula-1} follows from \eqref{multi-Hecke} and \eqref{eqn:Zlocal}. The second assertion \eqref{delta-formula-3} follows from \eqref{Delta-formula-1} and the M\"obius inversion formula.
\end{proof}


%


Set
\begin{equation}\label{eqn:deltastar-def}
\Delta^\star_{k,\n}(\a) = \sum_{\f \in \Pi_{k}(\n)}  C_{\f}(\a),
\end{equation}
and observe that
\begin{equation}\label{eqn:deltastar1}
\Delta^\star_{k,\n}(\a) = \sum_{ (\m,\n)=1} \frac{1}{\norm(\m)} \Delta_{k,\n}^*(\m^2,\a).
\end{equation}
By this observation and \eqref{delta-formula-3}, we then obtain

\begin{prop}
For $(\a,\n)=1$, we have 
\begin{align} \label{delta-formula-4}
\begin{split}
\Delta^\star_{k,\n}(\a) 
&=\mathcal{K}_F^{-1}\norm(k-1)
\sum_{\mathfrak{l}\m=\n}\mu(\mathfrak{l})\norm(\m)\sum_{(\ell,\m)=1}\frac{1}{\norm(\ell)}\Delta_{k,\m}(\ell^2,\a).
 \end{split}
\end{align}
\end{prop}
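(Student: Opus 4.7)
The plan is to prove \eqref{delta-formula-4} by a direct substitution of \eqref{delta-formula-3} into \eqref{eqn:deltastar1}, followed by a single change of variables. Substituting \eqref{delta-formula-3} (with its first argument set to $\mathfrak{q}^2$ and its second to $\a$) into \eqref{eqn:deltastar1} and interchanging the orders of summation gives
$$
\Delta^\star_{k,\n}(\a) = \mathcal{K}_F^{-1}\norm(k-1)\sum_{\mathfrak{l}\m=\n}\mu(\mathfrak{l})\norm(\m)\sum_{\substack{(\mathfrak{q},\n)=1\\ \ell\mid\mathfrak{l}^\infty}}\frac{1}{\norm(\mathfrak{q}\ell)}\Delta_{k,\m}\bigl((\mathfrak{q}\ell)^2,\a\bigr),
$$
where the inner sum in \eqref{delta-formula-3} is read as $\ell\mid\mathfrak{l}^\infty$, in line with the second line of \eqref{Delta-formula-1}.

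The heart of the argument is to collapse this inner double sum into $\sum_{(\ell',\m)=1}\norm(\ell')^{-1}\Delta_{k,\m}(\ell'^2,\a)$ via the reindexing $\ell' = \mathfrak{q}\ell$. The square-freeness of $\n$ makes $\mathfrak{l}$ and $\m=\n/\mathfrak{l}$ coprime, so their prime supports are disjoint. Consequently, every ideal $\ell'$ with $(\ell',\m)=1$ admits a \emph{unique} factorisation $\ell'=\mathfrak{q}\ell$ in which $\ell$ is the $\mathfrak{l}$-part of $\ell'$ (supported on primes dividing $\mathfrak{l}$, so $\ell\mid\mathfrak{l}^\infty$) and $\mathfrak{q}=\ell'/\ell$ is supported on primes outside $\n$ (hence $(\mathfrak{q},\n)=1$); conversely, any such pair $(\mathfrak{q},\ell)$ produces an $\ell'$ coprime to $\m$. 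Under this bijection $\norm(\mathfrak{q}\ell)=\norm(\ell')$ and $(\mathfrak{q}\ell)^2 = \ell'^2$, so the two sums agree term by term and \eqref{delta-formula-4} drops out.

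There is no serious analytic obstacle here; the only things requiring care are the bookkeeping of the reindexing and the use of square-freeness of $\n$ to guarantee that the factorisation $\ell'=\mathfrak{q}\ell$ is both well-defined and unique (and that the combined ranges of $\mathfrak{q}$ and $\ell$ exhaust exactly the ideals $\ell'$ with $(\ell',\m)=1$).
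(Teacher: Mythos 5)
Your proof is correct and is precisely the substitution that the paper intends (the paper simply says ``By this observation and \eqref{delta-formula-3}, we then obtain''), with the reindexing $\ell'=\mathfrak{q}\ell$ spelled out. You are also right to read the inner sum in \eqref{delta-formula-3} as $\ell\mid\mathfrak{l}^{\infty}$: with the printed condition $\ell\mid\mathfrak{l}$ the pairs $(\mathfrak{q},\ell)$ would fail to exhaust all $\ell'$ with $(\ell',\m)=1$ (they would miss the $\ell'$ having a prime of $\mathfrak{l}$ to a power $\geq 2$), and indeed a direct check at $\n=\p$ shows \eqref{delta-formula-3} is only an identity with the exponent $\infty$, so this is a typo in the paper, consistent with the second line of \eqref{Delta-formula-1} from which \eqref{delta-formula-3} is derived.
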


In light of the work of Iwaniec-Luo-Sarnak \cite{Iwaniec-Luo-Sarnak}, we shall consider the splitting
$$
\Delta^\star_{k,\n}(\a) =\Delta'_{k,\n}(\a)+\Delta^\infty_{k,\n}(\a), 
$$
where
\begin{align} \label{def-delat'}
\begin{split}
\Delta'_{k,\n}(\a)
& =
\mathcal{K}_F^{-1}\norm(k-1)
\sum_{\substack{ \mathfrak{l}\m=\n\\ \norm(\mathfrak{l})\le X}}\mu(\mathfrak{l})\norm(\m)\sum_{\substack{(\ell,\m)=1\\ \norm(\ell)\le Y}}\frac{1}{\norm(\ell)}\Delta_{k,\m}(\ell^2,\a),
 \end{split}
\end{align}
with parameters $ X, Y \ge 1$ to be chosen later, and $\Delta^\infty_{k,\n}(\a)$ denotes the complementary sum.

We shall require the following technical lemma.

\begin{lemma}\label{lem-delta-infty-bd}
Let  $\n$ be square-free. 
Assume GRH for $L(s,\Sym^2(\f))$. Then for $(\a,\n)=1$ and any sequence $(a_\q)$ satisfying 
\begin{equation}\label{aq-cond}
\sum_{(\q,\a\n)=1}a_\q C_\f(\q)\ll (\norm(\a\n)\norm(k))^{\epsilon},
\end{equation}
we have 
\begin{align} \label{delta-infty-bd}
\sum_{(\q,\a\n)=1}a_\q \Delta^\infty_{k,\n}(\a\q)\ll \norm(k) \norm(\n) (X^{-1} +Y^{-1/2}) (\norm(\a\n) \norm(k)XY)^{\epsilon}.
\end{align}
\end{lemma}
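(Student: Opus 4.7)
The plan is to decompose
\[
\Delta^\infty_{k,\n}(\a\q) \;=\; \Delta^{\infty,1}_{k,\n}(\a\q) \;+\; \Delta^{\infty,2}_{k,\n}(\a\q),
\]
where $\Delta^{\infty,1}$ collects the summands in \eqref{delta-formula-4} with $\norm(\mathfrak{l})>X$ (and $\ell$ unrestricted), while $\Delta^{\infty,2}$ collects the remaining ones (those with $\norm(\mathfrak{l})\le X$ but $\norm(\ell)>Y$). These two pieces will yield the $X^{-1}$ and $Y^{-1/2}$ contributions respectively, via the same spectral technique applied to different ranges.

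For each piece I would re-invoke the first identity of Lemma~\ref{lem:Delta} to rewrite each $\Delta_{k,\m}(\ell^2,\a\q)$ as a weighted sum over $\f\in\Pi_k(\m_2)$ with $\mathfrak{l}_2\m_2=\m$, weighted by $Z_\m(1,\f)/Z(1,\f)$; the prefactors $\mathcal{K}_F$, $\norm(\m)$, and $\norm(k-1)$ then cancel against their counterparts in \eqref{delta-formula-4}. Using the multiplicativity $C_\f(\a\q)=C_\f(\a)C_\f(\q)$, valid because $(\q,\a\n)=1$, $(\a,\n)=1$, and $\n$ is square-free, and swapping the orders of summation, one reduces the problem to
\[
\sum_{\mathfrak{l}\m=\n}\mu(\mathfrak{l})\sum_{\mathfrak{l}_2\m_2=\m}\sum_{\f\in\Pi_k(\m_2)}\frac{Z_\m(1,\f)}{Z(1,\f)}\,C_\f(\a)\,\Bigl[\sum_\q a_\q C_\f(\q)\Bigr]\Bigl[\sum_\ell\frac{C_\f(\ell^2)}{\norm(\ell)}\Bigr],
\]
where the outer $\mathfrak{l}$-range and the inner $\ell$-range depend on which piece is considered. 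The hypothesis \eqref{aq-cond} bounds the $\q$-factor by $(\norm(\a\n)\norm(k))^\epsilon$ uniformly in $\f$; the Ramanujan bound \eqref{eqn:ramanujan-bound} gives $|C_\f(\a)|\ll\norm(\a)^\epsilon$; and a direct Euler-product estimate yields $Z_\m(1,\f)/Z(1,\f)\ll\norm(\m)^\epsilon$.

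What remains is controlling the $\ell$-factor using GRH for $L(s,\Sym^2(\f))$. A straightforward Euler-product computation identifies $\sum_\ell C_\f(\ell^2)\norm(\ell)^{-s}$ with $L(s,\Sym^2(\f))/\zeta_F^{\n}(2s)$ up to local Euler factors at primes dividing $\m$. A Perron-type formula followed by a contour shift past $\Re(s)=1/2$ then yields: the full $\ell$-sum is bounded by $(\norm(\n)\norm(k))^\epsilon$ (used in $\Delta^{\infty,1}$), while the tail $\sum_{\norm(\ell)>Y}$ is bounded by $Y^{-1/2}(\norm(\n)\norm(k))^\epsilon$ (used in $\Delta^{\infty,2}$). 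Accumulating the factor $|\Pi_k(\m_2)|\ll\norm(\m_2)\norm(k)^{1+\epsilon}$, the divisor sum over the pairs $\mathfrak{l}_2\m_2=\m$, and the outer $\mathfrak{l}$-sum (using $\sum_{\mathfrak{l}|\n,\,\norm(\mathfrak{l})>X}1/\norm(\mathfrak{l})\ll\norm(\n)^\epsilon/X$ for $\Delta^{\infty,1}$ and $\sum_{\mathfrak{l}|\n}1\ll\norm(\n)^\epsilon$ for $\Delta^{\infty,2}$) produces the advertised bound $\norm(k)\norm(\n)(X^{-1}+Y^{-1/2})(\norm(\a\n)\norm(k)XY)^\epsilon$.

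The hardest step, I expect, is the tail estimate $\sum_{\norm(\ell)>Y}C_\f(\ell^2)/\norm(\ell)\ll Y^{-1/2+\epsilon}$ under GRH. One must carry out the Euler-product identification with $L(s,\Sym^2(\f))/\zeta_F^\n(2s)$ with explicit control of the ramified local factors at primes dividing $\m$, and then carry out a contour shift while uniformly controlling a convex-type bound for $L(s,\Sym^2(\f))$ on $\Re(s)=1/2+\epsilon$ with only polynomial dependence on the analytic conductor (which involves $\norm(\n)$ and $\norm(k)$). Managing $Z_\m(1,\f)/Z(1,\f)$ uniformly across the divisor sum over $\mathfrak{l}|\n$ is a secondary technical point that must be handled with care to keep all losses within the $(\norm(\a\n)\norm(k)XY)^\epsilon$ tolerance.
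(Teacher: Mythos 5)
Your proof is correct and follows essentially the same route as the paper: decompose $\Delta^\infty$ into the $\norm(\mathfrak{l})>X$ and $\norm(\ell)>Y$ pieces, spectrally expand $\Delta_{k,\m}$ via Lemma~\ref{lem:Delta}, isolate the $\q$-sum using \eqref{aq-cond} together with Ramanujan for $C_\f(\a)$, and control the $\ell$-contribution through the Dirichlet series $L(s,\Sym^2(\f))/\zeta_F^{\mathfrak M}(2s)$ and Perron under GRH, then sum trivially over the divisor ranges. One small efficiency the paper exploits that you overlook: for the $\norm(\mathfrak{l})>X$ piece the untruncated $\ell$-sum cancels the $Z$-ratio exactly, since $\sum_{(\ell,\m)=1}C_\f(\ell^2)\norm(\ell)^{-1}=Z(1,\f)/Z_\m(1,\f)$, collapsing that piece to $\sum_{\mathfrak{l}\mathfrak{L}\mathfrak{M}=\n,\ \norm(\mathfrak{l})>X}\mu(\mathfrak{l})\sum_{\f\in\Pi_k(\mathfrak{M})}C_\f(\a\q)$ with no GRH or $\ell$-estimate needed there at all---your route of bounding $Z_\m/Z$ and the full $\ell$-sum separately by $(\norm(\m)\norm(k))^\epsilon$ gives the same bound since GRH is assumed anyway.
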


\begin{proof}
By the definition of $\Delta^\infty_{k,\n}$, we have
\begin{align*} 
\Delta^\infty_{k,\n}(\a\q)
&=\mathcal{K}_F^{-1}\norm(k-1)\\
&\times \Bigg( 
\sum_{\substack{ \mathfrak{l}\m=\n\\ \norm(\mathfrak{l})>X}}\mu(\mathfrak{l})\norm(\m)\sum_{(\ell,\m)=1}\frac{1}{\norm(\ell)}\Delta_{k,\m}(\ell^2,\a\q)
+ \sum_{\substack{ \mathfrak{l}\m=\n\\ \norm(\mathfrak{l})\le X}}\mu(\mathfrak{l})\norm(\m)\sum_{\substack{(\ell,\m)=1\\ \norm(\ell)> Y}}\frac{1}{\norm(\ell)}\Delta_{k,\m}(\ell^2,\a\q)
 \Bigg).
\end{align*}
By the second equality of \eqref{Delta-formula-1} we have
\begin{align*} 
\Delta^\infty_{k,\n}(\a\q)
&=\sum_{\substack{ \mathfrak{l}\m=\n\\ \norm(\mathfrak{l})>X}}\mu(\mathfrak{l})\sum_{\mathfrak{L}\mathfrak{M}=\m}\sum_{(\ell,\m)=1}\frac{1}{\norm(\ell)}\sum_{\ell'\mid \mathfrak{L}^\infty}\frac{1}{\norm(\ell')}\Delta_{k,\mathfrak{M}}^*( (\ell\ell')^2,\a\q)\\
&+\sum_{\substack{ \mathfrak{l}\m=\n\\ \norm(\mathfrak{l})\le X}}\mu(\mathfrak{l})\sum_{\mathfrak{L}\mathfrak{M}=\m}\sum_{\substack{(\ell,\m)=1\\ \norm(\ell)> Y}}\frac{1}{\norm(\ell)}\sum_{\ell'\mid \mathfrak{L}^\infty}\frac{1}{\norm(\ell')}\Delta_{k,\mathfrak{M}}^*( (\ell\ell')^2,\a\q).
\end{align*}

As $\n$ is square-free, the conditions $\m\mid \n$ and $\mathfrak{L}\mathfrak{M}=\m$ imply that both $\mathfrak{L}$ and $\mathfrak{M}$ are square-free. Thus, for  $\ell'\mid \mathfrak{L}^\infty$, we know $(\ell',\mathfrak{M}) =1$. Recall that by \eqref{eqn:deltastar-def} and \eqref{eqn:deltastar1} we have
$$
\sum_{ (\ell\ell',\mathfrak{M})=1} \frac{1}{\norm(\ell\ell')} \Delta_{k,\mathfrak{M}}^*( (\ell\ell')^2,\a\q)
= \Delta^\star_{k,\mathfrak{M}}(\a\q)
=\sum_{\f \in \Pi_{k}(\mathfrak{M})}  C_{\f}(\a\q).
$$
In addition, we know that
$$
\Delta^{*}_{k,\mathfrak{M}}( (\ell\ell')^2,\a\q)=
 \sum_{\f\in \Pi_{k}(\mathfrak{M})}\frac{Z_{\mathfrak{M}}(1,\f)}{Z(1,\f)}C_{\f}((\ell\ell')^2) C_{\f}(\a\q) .
$$
Therefore, we arrive at
\begin{align*} 
\Delta^\infty_{k,\n}(\a\q)
&=\sum_{\substack{ \mathfrak{l}\mathfrak{L}\mathfrak{M}=\n\\ \norm(\mathfrak{l})>X}}\mu(\mathfrak{l})
\sum_{\f \in \Pi_{k}(\mathfrak{M})}  C_{\f}(\a\q)\\
&+\sum_{\substack{ \mathfrak{l}\m=\n\\ \norm(\mathfrak{l})\le X}}\mu(\mathfrak{l})\sum_{\mathfrak{L}\mathfrak{M}=\m}\sum_{\substack{(\ell,\m)=1\\ \norm(\ell)> Y}}\frac{1}{\norm(\ell)}\sum_{\ell'\mid \mathfrak{L}^\infty}\frac{1}{\norm(\ell')}\sum_{\f\in \Pi_{k}(\mathfrak{M})}\frac{Z_{\mathfrak{M}}(1,\f)}{Z(1,\f)}C_{\f}((\ell\ell')^2) C_{\f}(\a\q)\\&=\sum_{\substack{ \mathfrak{l}\mathfrak{L}\mathfrak{M}=\n\\ \norm(\mathfrak{l})>X}}\mu(\mathfrak{l})
\sum_{\f \in \Pi_{k}(\mathfrak{M})}  C_{\f}(\a\q)+ \sum_{\substack{ \mathfrak{l}\mathfrak{L}\mathfrak{M}=\n\\ \norm(\mathfrak{l})\le X}}\mu(\mathfrak{l})\sum_{\f\in \Pi_{k}(\mathfrak{M})}C_{\f}(\a\q)\frac{Z_{ \mathfrak{LM}}(1,\f)}{Z(1,\f)}
\sum_{\substack{(\ell,\mathfrak{LM})=1\\ \norm(\ell)> Y}}\frac{C_{\f}(\ell^2)}{\norm(\ell)},
\end{align*}
where the last equality follows from the multiplicativity of $C_{\f}$. In addition, on GRH, it can be shown that
$$
\frac{Z_{ \mathfrak{LM}}(1,\f)}{Z(1,\f)}
\sum_{\substack{(\ell,\mathfrak{LM})=1\\ \norm(\ell)> Y}}\frac{C_{\f}(\ell^2)}{\norm(\ell)}
\ll Y^{-1/2} (\norm(k)\norm( \mathfrak{L M } )Y)^{\epsilon}.
$$
In fact, it suffices to observe that
 \[\sum_{\ell\subset\ringO_F}\frac{C_{\f}(\ell^2)}{\norm(\ell)^s}=\frac{L(s,\Sym^2(\f))}{\zeta^{\mathfrak{M}}_{F}(2s)},\] and use Perron's formula along with our assumption of GRH to deduce the desired bound for the various summatory functions of the coefficients $C_{\f}(\ell^2)$.
Finally, the lemma follows from \eqref{eqn:ramanujan-bound} and \eqref{aq-cond}.
\end{proof}
\begin{prop}
We have \begin{equation}\label{eqn:size-of-newspace}
|\Pi_{k}(\n)|
=\mathcal{K}_F^{-1} \norm(\n)\norm(k-1)
\prod_{\p\mid \n} (1-\norm(\p)^{-1})
+O((\norm(k)^{\frac{13}{21}+\delta}\norm(\n)^{\frac47 +\delta})
\end{equation}
for any $0<\delta<\frac12$.
\end{prop}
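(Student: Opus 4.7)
\medskip

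\noindent\textbf{Proof plan.} The starting point is the observation that, since primitive forms are normalised so that $C_\f(\ringO_F)=1$, specialising \eqref{eqn:deltastar-def} at $\a=\ringO_F$ gives $|\Pi_k(\n)|=\Delta^\star_{k,\n}(\ringO_F)$. The plan is therefore to compute $\Delta^\star_{k,\n}(\ringO_F)$ asymptotically by inserting \eqref{delta-formula-4} and exploiting the splitting $\Delta^\star_{k,\n}(\ringO_F)=\Delta'_{k,\n}(\ringO_F)+\Delta^\infty_{k,\n}(\ringO_F)$ already introduced in \eqref{def-delat'}, balancing the two sources of error by a suitable choice of the cut-off parameters $X$ and $Y$.

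\medskip

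\noindent The easy half is the tail. I would apply Lemma \ref{lem-delta-infty-bd} with $\a=\ringO_F$ and $a_\q=\mathbf{1}_{\q=\ringO_F}$; the hypothesis \eqref{aq-cond} is trivial since the left-hand side is just $C_\f(\ringO_F)=1$. This yields
\[
\Delta^\infty_{k,\n}(\ringO_F)\ll \norm(k)\norm(\n)\bigl(X^{-1}+Y^{-1/2}\bigr)(\norm(\n)\norm(k)XY)^{\epsilon}.
\]
For the truncated piece $\Delta'_{k,\n}(\ringO_F)$, I would substitute Proposition~\ref{trace-formula} into \eqref{def-delat'}. The diagonal $\1_{\ell^2=\ringO_F}$ contributes only from $\ell=\ringO_F$, producing
\[
\mathcal{K}_F^{-1}\norm(k-1)\sum_{\substack{\mathfrak{l}\m=\n\\ \norm(\mathfrak{l})\le X}}\mu(\mathfrak{l})\norm(\m)
=\mathcal{K}_F^{-1}\norm(k-1)\norm(\n)\prod_{\p\mid\n}(1-\norm(\p)^{-1})+O\!\left(\frac{\norm(k)\norm(\n)^{1+\epsilon}}{X}\right),
\]
which is the expected main term together with a truncation error absorbed by tuning $X$.

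\medskip

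\noindent The genuinely technical step is the off-diagonal contribution to $\Delta'_{k,\n}(\ringO_F)$, namely the Kloosterman sum over $\c^{2}\sim\ell^{2}$, $c\in\c^{-1}\m\setminus\{0\}$, and units $\epsilon$, weighted by $J_{k-1}(4\pi\sqrt{\epsilon\ell^{2}[\ell^{2}\c^{-2}]}/|c|)/\norm(c\c)$. The plan is to insert the Weil bound \eqref{weilbound} together with $\tau((c)\c)\ll\norm(c\c)^{\epsilon}$ and the Bessel estimate \eqref{J-Bessel0} for a free parameter $0\le\delta<1$; using that we are summing against $\norm(\ell)\le Y$ and $\norm(\mathfrak{l})\le X$, and that the number of generators of an ideal of norm $\le T$ is $\ll T^{1+\epsilon}$ by Dirichlet's unit theorem, one can bound the off-diagonal by a quantity of the shape
\[
\norm(k-1)\cdot\norm(k)^{-1/3}\cdot\bigl(\text{power of }Y\bigr)\cdot\bigl(\text{power of }X\bigr)\cdot(\norm(k)\norm(\n)XY)^{\epsilon},
\]
with the powers of $X,Y$ tracked through the $\delta$-parameter in \eqref{J-Bessel0}. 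This is the main obstacle: bookkeeping of the Kloosterman--Bessel bounds so as to obtain a savings in both $X$ and $Y$ simultaneously.

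\medskip

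\noindent Once the explicit off-diagonal bound is in hand, one equates it (and the truncation error $\norm(k)\norm(\n)^{1+\epsilon}/X$ together with the $\Delta^\infty$ bound) against the main term and optimises $X$ and $Y$ in terms of $\norm(k)$ and $\norm(\n)$. The exponents $\tfrac{13}{21}$ in $\norm(k)$ and $\tfrac{4}{7}=\tfrac{12}{21}$ in $\norm(\n)$ appearing in the claim are precisely what drops out of this three-way optimisation, with the freedom $0<\delta<\tfrac12$ carried through from the Bessel estimate \eqref{J-Bessel0}. This yields \eqref{eqn:size-of-newspace} and completes the proof.
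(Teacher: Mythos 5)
Your proposal follows essentially the same route as the paper: start from $|\Pi_k(\n)|=\Delta^\star_{k,\n}(\ringO_F)$, split into $\Delta'+\Delta^\infty$, bound the tail by Lemma \ref{lem-delta-infty-bd} with $a_\q=\mathbf{1}_{\q=\ringO_F}$, extract the main term from the diagonal of the Petersson formula via \eqref{def-delat'}, and estimate the off-diagonal Kloosterman–Bessel contribution by \eqref{weilbound} and \eqref{J-Bessel0} before optimising $X$ and $Y$. The only missing ingredient is the actual computation: the paper shows the off-diagonal piece is $\ll YX^{\frac12-\delta}\norm(k)^{\delta-\frac13}\norm(\n)^{-\frac12+\delta+\epsilon}$ (note the full $\norm(k)$-exponent is $\delta-\frac13$, not just the $-\frac13$ you wrote, since the $(x/k)^{1-\delta}$ factor in \eqref{J-Bessel0} contributes an additional $\norm(k)^{\delta-1}$ over the $n$ places), and then balances by taking $Y=X^2$ and $X=\norm(k)^{(\frac43-\delta)/(\frac72-\delta)}\norm(\n)^{(\frac32-\delta)/(\frac72-\delta)}$, which yields the $\norm(k)^{\frac{13}{21}+\delta}\norm(\n)^{\frac47+\delta}$ error as $\delta\to 0$; your sketch defers precisely these steps but the strategy is the correct one.
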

\begin{proof}
As $C_\f(\ringO_{F}) =1$, we can start with writing
\begin{equation}\label{size-of-newspace1}
|\Pi_{k}(\n)| = \Delta^\star_{k,\n}(\ringO_{F}) =\Delta'_{k,\n}(\ringO_{F})+\Delta^\infty_{k,\n}(\ringO_{F})
\end{equation}
and noting that  Lemma \ref{lem-delta-infty-bd} tells us that 
\begin{equation}\label{eqn:delta-infinity-ub}\Delta^\infty_{k,\n}(\ringO_{F})
\ll (\norm(k) \norm(\n))^{1+\epsilon} (X^{-1} +Y^{-1/2}) (XY)^{\epsilon}.
\end{equation} On the other hand, from Proposition \ref{trace-formula} and \eqref{def-delat'}, it follows that
 \begin{align*} 
\begin{split}
&\Delta'_{k,\n}(\ringO_{F})\\
& =
\mathcal{K}_F^{-1}\norm(k-1)
\sum_{\substack{ \mathfrak{l}\m=\n\\ \norm(\mathfrak{l})\le X}}\mu(\mathfrak{l})\norm(\m)\sum_{\substack{(\ell,\m)=1\\ \norm(\ell)\le Y}}\frac{1}{\norm(\ell)}\Delta_{k,\m}(\ell^2,\ringO_{F})\\&=\mathcal{K}_F^{-1}\norm(k-1)
\sum_{\substack{ \mathfrak{l}\m=\n\\ \norm(\mathfrak{l})\le X}}\mu(\mathfrak{l})\norm(\m)\\
&\times \Bigg(1 + \frac{(-1)^{\trace(k/2)}(2\pi)^n}{2|d_F|^{1/2}}
\sum_{\substack{(\ell,\m)=1\\  \norm(\ell)\le Y}}\frac{1}{\norm(\ell)} 
\sum_{\substack{\mathfrak{c}^{2}\sim\ell^2 \\c\in\c^{-1}\m\backslash\{0\}\\\epsilon\in\mathcal{O}_{F}^{\times+}/\mathcal{O}_{F}^{\times2}}}
\frac{\mathrm{Kl}(\epsilon,\ell^2;1,\ringO_{F};c,\c)}{\norm(c\c)}J_{k-{1}}\left(\frac{4\pi\sqrt{{\epsilon}{\left[\ell^2\mathfrak{c}^{-2}\right]}}}{|{c}|}\right) \Bigg).
 \end{split}
\end{align*} 
Since
$$
\sum_{\substack{ \mathfrak{l}\m=\n\\ \norm(\mathfrak{l})\le X}}\mu(\mathfrak{l})\norm(\m)
= \norm(\n)
\prod_{\p\mid \n} (1-\norm(\p)^{-1}) +O\left( \frac{\norm(\n)^{1+\epsilon}}{X}\right),
$$
we get
\begin{align}\label{eqn:asymp-delta'}
\begin{split}
\Delta'_{k,\n}(\ringO_{F})
& =\mathcal{K}_F^{-1}\norm(k-1) \norm(\n)
\prod_{\p\mid \n} (1-\norm(\p)^{-1})
\\&+\mathcal{K}_F^{-1}\norm(k-1)\frac{(-1)^{\trace(k/2)}(2\pi)^n}{2|d_F|^{1/2}}\sum_{\substack{ \mathfrak{l}\m=\n\\ \norm(\mathfrak{l})\le X}}\mu(\mathfrak{l})\norm(\m)
\sum_{\substack{(\ell,\m)=1\\  \norm(\ell)\le Y}}\frac{1}{\norm(\ell)} \\
&\times
\sum_{\substack{\mathfrak{c}^{2}\sim\ell^2 \\c\in\c^{-1}\m\backslash\{0\}\\\epsilon\in\mathcal{O}_{F}^{\times+}/\mathcal{O}_{F}^{\times2}}}
\frac{\mathrm{Kl}(\epsilon,\ell^2;1,\ringO_{F};c,\c)}{\norm(c\c)}J_{k-{1}}\left(\frac{4\pi\sqrt{{\epsilon}{\left[\ell^2\mathfrak{c}^{-2}\right]}}}{|{c}|}\right) +O\left( \frac{\norm(k)\norm(\n)^{1+\epsilon}}{X}\right).
\end{split}
\end{align}
In order to bound \[J_{k-{1}}\left(\frac{4\pi\sqrt{{\epsilon}{\left[\ell^2\mathfrak{c}^{-2}\right]}}}{|{c}|}\right)=\prod_{j=1}^nJ_{k_j-1}\left(\sigma_j\left(\frac{4\pi\sqrt{{\epsilon}{\left[\ell^2\mathfrak{c}^{-2}\right]}}}{|{c}|}\right)\right),\]
 we apply  \eqref{J-Bessel0}  with $\delta_j$ being chosen as $\delta_j=0$ whenever $|\epsilon_{j}|\leq1$, and $\delta_{j}=\delta$ for some fixed $\delta\in (0,1)$ otherwise. We get
 \begin{align*} 
 \begin{split}
J_{k-{1}}\left(\frac{4\pi\sqrt{{\epsilon}{\left[\ell^2\mathfrak{c}^{-2}\right]}}}{|{c}|}\right)&\ll\prod_{j=1}^nk_{j}^{-\frac13}\prod_{j=1}^n \Bigg(\frac{\sqrt{\epsilon_j\left[\ell^2\mathfrak{c}^{-2}\right]_{j}}}{k_j |c_j|} \Bigg)
\prod_{j=1}^n \Bigg(\frac{\sqrt{\epsilon_j\left[\ell^2\mathfrak{c}^{-2}\right]_{j}}}{k_j |c_j|} \Bigg)^{-\delta_j}\\
 &\ll  \norm(k)^{\delta-\frac43}\sqrt{\norm(\ell^2)}|\norm(c\mathfrak{c})|^{\delta-1}\prod_{|\eta_j|>1}|\epsilon_j|^{-\frac{\delta}{2}}.
  \end{split}
 \end{align*}
We also use the Weil bound for Kloosterman sums \eqref{weilbound} to get
\begin{equation*}
\left|\mathrm{Kl}(\epsilon,\ell^2; 1,\mathcal{O}_F;c,\c)\right|\ll_{F}\tau(c\c)\norm(c\c)^{\frac{1}{2}}.\end{equation*} 
With these bounds in hand, we obtain
\begin{align}\label{eqn:est-J-bessel-kloosterman-sum}
\begin{split}
&\frac{(-1)^{\trace(k/2)}(2\pi)^n}{2|d_F|^{1/2}}\sum_{\substack{ \mathfrak{l}\m=\n\\ \norm(\mathfrak{l})\le X}}\mu(\mathfrak{l})\norm(\m) 
\sum_{\substack{(\ell,\m)=1\\  \norm(\ell)\le Y}}\frac{1}{\norm(\ell)} 
\sum_{\substack{\mathfrak{c}^{2}\sim\ell^2 \\c\in\c^{-1}\m\backslash\{0\}\\\epsilon\in\mathcal{O}_{F}^{\times+}/\mathcal{O}_{F}^{\times2}}}
\frac{\mathrm{Kl}(\epsilon,\ell^2;1,\ringO_{F};c,\c)}{\norm(c\c)}J_{k-{1}}\left(\frac{4\pi\sqrt{{\epsilon}{\left[\ell^2\mathfrak{c}^{-2}\right]}}}{|{c}|}\right).
\\
&\ll
 \norm(k)^{\delta-\frac43}\sum_{\substack{ \mathfrak{l}\m=\n\\ \norm(\mathfrak{l})\le X}}\norm(\m)\sum_{\substack{(\ell,\m)=1\\ \norm(\ell)\le Y}}
 \sum_{\substack{\mathfrak{c}^{2}\sim\ell^2\\c\in\c^{-1}\mathfrak{m}\backslash\{0\}}}\frac{\tau(c\c)}{\norm(c\c)^{\frac32-\delta}}\\
 &\ll
Y\norm(k)^{\delta-\frac43}\sum_{\substack{ \mathfrak{l}\m=\n\\ \norm(\mathfrak{l})\le X}}\norm(\m)^{-\frac12+\delta} \\
&\ll  YX^{\frac12-\delta}\norm(k)^{\delta-\frac43}\norm(\n)^{-\frac12+\delta+\epsilon},
\end{split}
\end{align}
where to handle the inner sum over $\mathfrak{l}$ and $\m$, we used
\begin{align*}
\sum_{\substack{ \mathfrak{l}\m=\n\\ \norm(\mathfrak{l})\le X}}\norm(\mathfrak{m})^{-\frac12+\delta}
=\sum_{\substack{ \m\mid\n\\ \norm(\m)\ge \frac{\norm(\n)}{X}}}\norm(\mathfrak{m})^{-\frac12+\delta}
&\ll \sum_{ \substack{\m\mid\n\\ \norm(\m)\ge \frac{\norm(\n)}{X}}}\left(\frac{\norm(\n)}{X}\right)^{-\frac12+\delta}\\&
\ll\norm(\n)^{-\frac12+\delta}X^{\frac12-\delta}\tau(\n)\\&\ll \norm(\n)^{-\frac12+\delta+\epsilon}X^{\frac12-\delta}
\end{align*}
for $\delta< \frac{1}{2}$.
Combining \eqref{size-of-newspace1}, \eqref{eqn:delta-infinity-ub}, \eqref{eqn:asymp-delta'}, and \eqref{eqn:est-J-bessel-kloosterman-sum}, we get

\begin{align}\label{eqn:size-of-newspace-2}
\begin{split}
|\Pi_{k}(\n)| &=\mathcal{K}_F^{-1}\norm(k-1) \norm(\n)
\prod_{\p\mid \n} (1-\norm(\p)^{-1})+O(YX^{\frac12-\delta}\norm(k)^{\delta-\frac13}\norm(\n)^{-\frac12+\delta+\epsilon})\\
&+O\left( \frac{\norm(k)\norm(\n)^{1+\epsilon}}{X}\right)
+O( (\norm(k) \norm(\n))^{1+\epsilon} (X^{-1} +Y^{-1/2}) (XY)^{\epsilon}).
\end{split}
\end{align}
Taking $Y=X^{2}$ and $X=\norm(k)^{\frac{\frac43-\delta}{\frac72-\delta}}\norm(\n)^{\frac{\frac32-\delta}{\frac72-\delta}}$ in \eqref{eqn:size-of-newspace-2}, we obtain the desired result.
\end{proof}

\section{1-level density for $L(s,\f)$: proof of Theorem \ref{thm:main1}}

The goal of this section is to obtain the desired asymptotic formula for 
\begin{equation*}
D_{k,\n}(\phi)=\sum_{\f\in\Pi_k(\n)}D(\f;\phi).
\end{equation*}

By Proposition \ref{prop:explicit-formula} and taking $R=\norm(\n)\norm(k)^2$, we have
\begin{align*}
D_{k,\n}(\phi)&=\left|\Pi_k(\n)\right|\widehat{\phi}(0)+\frac12\left|\Pi_k(\n)\right|\phi(0)\\&-\frac{2}{\log R}\sum_{\mathfrak{p}\nmid\n}\log\norm(\mathfrak{p})\frac{\widehat{\phi}\left(\frac{\log\norm(\mathfrak{p})}{\log R}\right)}{\norm(\mathfrak{p})^{\frac{1}{2}}}\Delta^\star_{k,\n}(\mathfrak{p}) +O \left(\left|\Pi_k(\n)\right|\frac{\log\log (\norm(\n)\norm(k)^2))}{\log R} \right).
\end{align*}
Observe that
\begin{align}\label{eqn:sum-over-primes-*}
\sum_{\mathfrak{p}\nmid\n}\log\norm(\mathfrak{p})\frac{\widehat{\phi}\left(\frac{\log\norm(\mathfrak{p})}{\log R}\right)}{\norm(\mathfrak{p})^{\frac{1}{2}}}\Delta^\star_{k,\n}(\mathfrak{p})
=\sum_{\mathfrak{p}\nmid\n}\log\norm(\mathfrak{p})\frac{\widehat{\phi}\left(\frac{\log\norm(\mathfrak{p})}{\log R}\right)}{\norm(\mathfrak{p})^{\frac{1}{2}}}\left(\Delta'_{k,\n}(\mathfrak{p})+\Delta^\infty_{k,\n}(\mathfrak{p})\right).
\end{align}
By Lemma \ref{lem-delta-infty-bd}, we know that
\begin{align}\label{eqn:sum-over-primes-infty}
\frac{2}{\log R}\sum_{\mathfrak{p}\nmid\n}\log\norm(\mathfrak{p})\frac{\widehat{\phi}\left(\frac{\log\norm(\mathfrak{p})}{\log R}\right)}{\norm(\mathfrak{p})^{\frac{1}{2}}}\Delta^\infty_{k,\n}(\mathfrak{p})&\ll \frac{X^{-1}+Y^{-\frac12}}{\log R}\norm(\n)\norm(k)(XY\norm(\n)\norm(k))^{\epsilon}.
\end{align}
Next, we determine the contribution of $\Delta'_{k,\n}(\mathfrak{p})$ to $D_{k,\n}(\phi)$. We have
\begin{align}
&\frac{2}{\log R}\sum_{\mathfrak{p}\nmid\n}\log\norm(\mathfrak{p})\frac{\widehat{\phi}\left(\frac{\log\norm(\mathfrak{p})}{\log R}\right)}{\norm(\mathfrak{p})^{\frac{1}{2}}}\Delta'_{k,\n}(\mathfrak{p})\nonumber\\
&=\frac{2}{\log R}\mathcal{K}_F^{-1}\norm(k-1)\sum_{\mathfrak{p}\nmid\n}\log\norm(\mathfrak{p})\frac{\widehat{\phi}\left(\frac{\log\norm(\mathfrak{p})}{\log R}\right)}{\norm(\mathfrak{p})^{\frac{1}{2}}}
\sum_{\substack{ \mathfrak{l}\m=\n\\ \norm(\mathfrak{l})\le X}}\mu(\mathfrak{l})\norm(\m)\sum_{\substack{(\ell,\m)=1\\ \norm(\ell)\le Y}}\frac{1}{\norm(\ell)}\Delta_{k,\m}(\ell^2,\mathfrak{p})\nonumber\\&=
\frac{2}{\log R}\mathcal{C}\mathcal{K}_F^{-1}\norm(k-1)\sum_{\mathfrak{p}\nmid\n}\log\norm(\mathfrak{p})\frac{\widehat{\phi}\left(\frac{\log\norm(\mathfrak{p})}{\log R}\right)}{\norm(\mathfrak{p})^{\frac{1}{2}}}
\nonumber\\
&\times\sum_{\substack{ \mathfrak{l}\m=\n\\ \norm(\mathfrak{l})\le X}}\mu(\mathfrak{l})\norm(\m)\sum_{\substack{(\ell,\m)=1\\ \norm(\ell)\le Y}}\frac{1}{\norm(\ell)}
 \sum_{\substack{\mathfrak{c}^{2}\sim\ell^2\mathfrak{p}\\c\in\c^{-1}\mathfrak{m}\backslash\{0\}\\\epsilon\in\mathcal{O}_{F}^{\times+}/\mathcal{O}_{F}^{\times2}}}\frac{\mathrm{Kl}(\epsilon,\ell^2; 1,\mathfrak{p};c,\c)}{\norm(c\c)}J_{k-{1}}
 \Bigg(\frac{4\pi\sqrt{{\epsilon}{\left[\ell^2\mathfrak{p}\mathfrak{c}^{-2}\right]}}}{|{c}|}\Bigg)\nonumber.
\end{align}
In order bound $J_{k-{1}}
 \Bigg(\frac{4\pi\sqrt{{\epsilon}{\left[\ell^2\mathfrak{p}\mathfrak{c}^{-2}\right]}}}{|{c}|}\Bigg)$,  we apply  \eqref{J-Bessel0}  with $\delta_j$ being chosen as $\delta_j=0$ whenever $|\epsilon_{j}|\leq1$, and $\delta_{j}=\delta$ for some fixed $\delta\in (0,1)$ otherwise. We get
 \begin{align}\label{J-Bessel1-bd}
 \begin{split}
J_{k-{1}}\Bigg(\frac{4\pi\sqrt{{\epsilon}{\left[\ell^2\mathfrak{p}\mathfrak{c}^{-2}\right]}}}{|{c}|}\Bigg)
&= \prod_{j=1}^nJ_{k_j-1}\left(\sigma_j\left(\frac{4\pi\sqrt{{\epsilon}{\left[\ell^2\mathfrak{p}\mathfrak{c}^{-2}\right]}}}{|{c}|}\right)\right)\\
&\ll\prod_{j=1}^nk_{j}^{-\frac13}\prod_{j=1}^n \Bigg(\frac{\sqrt{\epsilon_j\left[\ell^2\mathfrak{p}\mathfrak{c}^{-2}\right]_{j}}}{k_j |c_j|} \Bigg)
\prod_{j=1}^n \Bigg(\frac{\sqrt{\epsilon_j\left[\ell^2\mathfrak{p}\mathfrak{c}^{-2}\right]_{j}}}{k_j |c_j|} \Bigg)^{-\delta_j}\\
 &\ll \norm(k)^{\delta-\frac43}\sqrt{\norm(\ell^2\mathfrak{p})}|\norm(c\mathfrak{c})|^{\delta-1}\prod_{|\eta_j|>1}|\epsilon_j|^{-\frac{\delta}{2}}.
  \end{split}
 \end{align}
We also use the Weil bound for Kloosterman sums to get
\begin{equation}\label{Weil-bd}
\left|\mathrm{Kl}(\epsilon,\ell^2; 1,\mathfrak{p};c,\c)\right|\ll_{F}\norm\left((\ell^2,\mathfrak{p},c\c)\right)^{\frac{1}{2}}\tau(c\c)\norm(c\c)^{\frac{1}{2}},\end{equation} 
With \eqref{J-Bessel1-bd} and \eqref{Weil-bd} in hand, we obtain
\begin{align}\label{eqn:sum-over-primes-'}
\begin{split}
&\frac{2}{\log R}\sum_{\mathfrak{p}\nmid\n}\log\norm(\mathfrak{p})\frac{\widehat{\phi}\left(\frac{\log\norm(\mathfrak{p})}{\log R}\right)}{\norm(\mathfrak{p})^{\frac{1}{2}}}\Delta'_{k,\mathcal{I}}(\mathfrak{p})\\
&\ll
\frac{\norm(k)^{\delta-\frac13}}{\log R}\sum_{\substack{ \mathfrak{l}\m=\n\\ \norm(\mathfrak{l})\le X}}\norm(\m)\sum_{\substack{(\ell,\m)=1\\ \norm(\ell)\le Y}}
 \sum_{\mathfrak{p}\nmid\n}\log\norm(\mathfrak{p})\widehat{\phi}\left(\frac{\log\norm(\mathfrak{p})}{\log R}\right)
 \sum_{\substack{\mathfrak{c}^{2}\sim\ell^2\mathfrak{p}\\c\in\c^{-1}\mathfrak{m}\backslash\{0\}}}\frac{\norm\left((\ell^2,\mathfrak{p},c\c)\right)^{\frac{1}{2}}\tau(c\c)}{\norm(c\c)^{\frac32-\delta}}\\
 &\ll\frac{Y\norm(k)^{\delta-\frac13}}{\log R}\sum_{\substack{ \mathfrak{l}\m=\n\\ \norm(\mathfrak{l})\le X}}\norm(\mathfrak{m})^{-\frac12+\delta}\sum_{\mathfrak{p}\nmid\n}\log\norm(\mathfrak{p})\widehat{\phi}\left(\frac{\log\norm(\mathfrak{p})}{\log R}\right)\\
 &\ll\frac{X^{\frac12-\delta}YR^u}{\log R}\norm(\n)^{-\frac12+\delta+\epsilon}\norm(k)^{\delta-\frac13},
 \end{split}
\end{align}
provided that $\delta< \frac{1}{2}$.

Combining \eqref{eqn:sum-over-primes-*}, \eqref{eqn:sum-over-primes-infty}, and \eqref{eqn:sum-over-primes-'}, we obtain
\begin{align}\label{eqn:sum-over-primes}
\begin{split}
&\frac{2}{\log R}\sum_{\mathfrak{p}\nmid\n}\log\norm(\mathfrak{p})\frac{\widehat{\phi}\left(\frac{\log\norm(\mathfrak{p})}{\log R}\right)}{\norm(\mathfrak{p})^{\frac{1}{2}}}\Delta^\star_{k,\n}(\mathfrak{p})\\&\ll \frac{X^{\frac12-\delta}YR^u}{\log R}\norm(\n)^{-\frac12+\delta+\epsilon}\norm(k)^{\delta-\frac13}
+ \frac{X^{-1}+Y^{-\frac12}}{\log R}\norm(\n)\norm(k)(XY\norm(\n)\norm(k))^{\epsilon}.
\end{split}
\end{align}
Finally, setting $Y=X^2$ and taking $X=(\norm(\n)\norm(k))^{\eta}$ for some sufficiently small $\eta>0$, we see that
\begin{align*}
\frac{2}{\log R}\sum_{\mathfrak{p}\nmid\n}\log\norm(\mathfrak{p})\frac{\widehat{\phi}\left(\frac{\log\norm(\mathfrak{p})}{\log R}\right)}{\norm(\mathfrak{p})^{\frac{1}{2}}}\Delta^{\star}_{k,\n}(\mathfrak{p})
=o(\norm(\n)\norm(k))
\end{align*} 
provided that
\begin{equation}\label{bound-for-u}u\leq \frac{(\frac32-(\eta(\frac52-\delta)-\delta-\epsilon))\log(\norm(\n)\norm(k))-\frac16 \log\norm(k)}{\log(\norm(\n)\norm(k)^2)}.
\end{equation}

\section{1-level density for $L(s,\f\times \g)$: proof of Theorem \ref{thm:main2}}\label{1LD-R-S}

In this section, we compute the 1-level density of $L(s,\f\times\g)$. More precisely, for a fixed  $\g\in\Pi_{k'}(\n')$, we shall establish an asymptotic formula for
$$
\sum_{\f\in\Pi_{k}(\n)}D(\f\times\g;\phi).
$$
 Before we proceed with these computations, we require the asymptotic evaluation of the mean value of the parameters $\delta_{\f\times \g}$ that appear in the asymptotic formula for $D(\f\times\g;\phi)$ furnished by Proposition \ref{prop:fg-explicit-formula}.

\subsection{An averaged asymptotic of $\delta_{\f\times \g}$}\label{sec-averaged-asymp}

In this section, we aim to show that
\begin{equation}\label{averaged-asymp}
\lim_{\norm(\n)\rightarrow\infty }\frac{1}{|\Pi_{k}(\n)|} \sum_{\f\in\Pi_{k}(\n)} \delta_{\f\times \g}
=1.
\end{equation}
when the class number of $F$ is odd or $\g$ is non-dihedral.\footnote{Note that from the previous pole calculations, we know upper and lower bounds for this averaged sum.} To do so, we shall invoke a result of Ramakrishnan and Yang \cite{ramakrishnan-yang} as follows.



Suppose that $\f\in\Pi_{k}(\n)$ and $\g\in\Pi_{k'}(\n')$ with square-free $\n$ and $\n'$. Clearly, when $k\neq k'$, $\f$ and $\g$ cannot be twist-equivalent. Also, by the remark beneath \cite[Theorem A]{ramakrishnan-yang}, if $ \f \otimes \chi= \g $ for some id\` ele class character of $F$, then the conductor of $\chi$ must be $1=\mathcal{Q}_F$, and thus $\chi$ is a class group character. In our consideration, $\chi$ must be quadratic. Furthermore, recall that as the conductor of $\chi$ is $1=\mathcal{Q}_F$, the level $\f \otimes \chi$ is again $\n$. So, we conclude that
\begin{itemize}
\item if $k'\neq k$, $\g$ will never be twist-equivalent to $\f$;
\item if  $\n'\neq \n$, the number of $\g \in \Pi_{k}(\n)$ such that $\g= \f \otimes \chi$ for some (quadratic) class group character $\chi$ is bounded by the class number of $F$.
\end{itemize}
Moreover, as $\chi$ is a quadratic class group character, if the class number of $F$ is odd, then $\chi$ has to be trivial. For such an instance, there is no non-trivial $\chi$ such that $\g= \f \otimes \chi$ for any given newform $\g$ (including $\g=\f$). Consequently, none of  $\f\in\Pi_{k}(\n)$ admits a non-trivial self-twist, which forces that none of $\f$ can be dihedral.

Now, we consider the case that the class number of $F$ is odd. For a fixed  $\g\in\Pi_{k'}(\n')$, as $\norm(\n)\rightarrow\infty $, we may assume $\n\neq \n'$ so that  $\f$ and $\g$ will be non-twist-equivalent. Moreover, as the number of dihedral forms in $\Pi_{k}(\n)$ is 0, we deduce
$$
\lim_{\norm(\n)\rightarrow\infty }\frac{1}{|\Pi_{k}(\n)|} \sum_{\f\in\Pi_{k}(\n)} \delta_{\f\times \g}
= \lim_{\norm(\n)\rightarrow\infty }\frac{1}{|\Pi_{k}(\n)|} \sum_{\substack{\f\in\Pi_{k}(\n)\\ \f \text{ non-dihedral}}} \delta_{\f\times \g}.
$$
For non-dihedral $\f$, since $\f$ and $\g$ are not twist-equivalent, by Theorem \ref{delta_fg},
we always have $ \delta_{\f\times \g}=1$, and thus the desired asymptotics \eqref{averaged-asymp} follows.


We shall however remark that when the class number of $F$ is even, the above argument does not shed any light on bounding the number of dihedral forms, and thus obtaining \eqref{averaged-asymp} seems hard.  Nonetheless, for any fixed non-dihedral $\g\in\Pi_{k'}(\n')$, with $ \norm(\n')< \norm(\n) $, we always have
$$
\frac{1}{|\Pi_{k}(\n)|} \sum_{\f\in\Pi_{k}(\n)} \delta_{\f\times \g} = 1
$$
since $\f$ and $\g$ have to be non-twist-equivalent in this case. Hence, we can derive \eqref{averaged-asymp} whenever $\g$ is non-dihedral, without assuming the oddness of the class number of $F$.


\subsection{Proof of Theorem \ref{thm:main2}} From Proposition \ref{prop:fg-explicit-formula}, we know that
\begin{align*}
D(\f\times \g;\phi)
&=\frac{\widehat{\phi}(0)}{\log R}\log\Bigg( \pi^{-4n} \norm(\d_{F}^{2}\n\n')^{\frac12}\prod_{j=1}^{{2n}}(|k_j-k_j'|+1)(k_j+k_j')\Bigg)
-\frac{\delta_{\f\times \g}}{2}\phi(0)\\
&-\frac{2}{\log R}\sum_{\mathfrak{p}\nmid \n\n'}\lambda_{\f\times \g}(\mathfrak{p})\log\norm(\mathfrak{p})\frac{\widehat{\phi}\left(\frac{\log\norm(\mathfrak{p})}{\log R}\right)}{\norm(\mathfrak{p})^{\frac{1}{2}}}
+O\left(\frac{\log\log\left( \max\{c_{\f\times \g}, \norm(\n\n') \}\right)}{\log R}\right).
\end{align*}
Setting
$ \displaystyle
S(\f\times\g;\phi)=\sum_{\mathfrak{p}\nmid \n\n'}\lambda_{\f\times \g}(\mathfrak{p})\log\norm(\mathfrak{p})\frac{\widehat{\phi}\left(\frac{\log\norm(\mathfrak{p})}{\log R}\right)}{\norm(\mathfrak{p})^{\frac{1}{2}}},$
we have
\begin{align*}\sum_{\f\in\Pi_{k}(\n)}S(\f\times\g;\phi)&=\sum_{\mathfrak{p}\nmid \n\n'}\lambda_{\g}(\mathfrak{p})\log\norm(\mathfrak{p})\frac{\widehat{\phi}\left(\frac{\log\norm(\mathfrak{p})}{\log R}\right)}{\norm(\mathfrak{p})^{\frac{1}{2}}}\sum_{\f\in\Pi_{k}(\n)}\lambda_{\f}(\mathfrak{p})\\&=\sum_{\mathfrak{p}\nmid \n\n'}\lambda_{\g}(\mathfrak{p})\log\norm(\mathfrak{p})\frac{\widehat{\phi}\left(\frac{\log\norm(\mathfrak{p})}{\log R}\right)}{\norm(\mathfrak{p})^{\frac{1}{2}}}\left(\Delta'_{k,\n}(\mathfrak{p})+\Delta^\infty_{k,\n}(\mathfrak{p})\right).\end{align*}

In order to estimate the contribution of $\Delta^\infty_{k,\n}(\mathfrak{p})$, we need to apply a result similar to Lemma \ref{lem-delta-infty-bd}. We set  $a_{\p}=\lambda_{\g}(\p)\frac{\log(\norm(\p))}{\norm(\p)^{\frac12}}\widehat{\phi}\left(\frac{\log\norm(\mathfrak{p})}{\log R}\right)$ if $\p$ is  prime and $\p\nmid\n'$ and $a_{\p}=0$ otherwise. Hence,
\begin{equation*}
\sum_{\p\nmid\n\n'} \lambda_\f(\p)\lambda_{\g}(\p)\frac{\log \norm(\p)}{\norm(\p)^{\frac12}}\widehat{\phi}\left(\frac{\log\norm(\mathfrak{p})}{\log R}\right)=\sum_{\p\nmid\mathfrak{n}} \lambda_\f(\p)a_{\p}.
\end{equation*} 
Noting that $\f\neq\overline{\g}$,\footnote{As $\g$ is with trivial character, this assumption is equivalent to $\f\neq\g$.} we know that $L(s,\f\times\g)$ is entire. Thus, assuming the GRH for $L(s,\f\times\g)$ and using \cite[Theorem~5.15]{IK}, we get
$
\sum_{\p\nmid\mathfrak{n}} \lambda_\f(\p)a_{\p}\ll (\norm(\n\n')\norm(kk'))^{\epsilon}
$
for every $\f\in\Pi_{k}(\m)$ with $\m\mid\n$. Following the proof of  Lemma \ref{lem-delta-infty-bd}, we derive the following asymptotic bound
\begin{align*}
\sum_{\mathfrak{p}\nmid \n\n'}\lambda_{\g}(\mathfrak{p})\log\norm(\mathfrak{p})\frac{\widehat{\phi}\left(\frac{\log\norm(\mathfrak{p})}{\log R}\right)}{\norm(\mathfrak{p})^{\frac{1}{2}}}\Delta^\infty_{k,\n}(\mathfrak{p})
\ll \norm(\n)\norm(k)(X^{-1}+Y^{-\frac12})\left(XY\norm(\n\n')\norm(kk')\right)^{\epsilon}.
\end{align*}
It then follows that
\begin{align*}
 \frac{2}{\log R} \sum_{\f\in\Pi_{k}(\n)}S(\f\times\g;\phi)
&= \frac{2}{\log R}\sum_{\mathfrak{p}\nmid \n\n'}\lambda_{\g}(\mathfrak{p})\log\norm(\mathfrak{p})\frac{\widehat{\phi}\left(\frac{\log\norm(\mathfrak{p})}{\log R}\right)}{\norm(\mathfrak{p})^{\frac{1}{2}}}\Delta'_{k,\n}(\mathfrak{p})\\
&
+O(\norm(\n)\norm(k)(X^{-1}+Y^{-\frac12})\left(XY\norm(\n\n')\norm(kk')\right)^{\epsilon}).
\end{align*}

 As argued in the previous section, we start by writing
 \begin{align*}
&\sum_{\mathfrak{p}\nmid \n\n'}\lambda_{\g}(\mathfrak{p})\log\norm(\mathfrak{p})\frac{\widehat{\phi}\left(\frac{\log\norm(\mathfrak{p})}{\log R}\right)}{\norm(\mathfrak{p})^{\frac{1}{2}}}\Delta'_{k,\n}(\mathfrak{p})\\
&=
\mathcal{C}\mathcal{K}_F^{-1}\norm(k-1)\sum_{\mathfrak{p}\nmid \n\n'}\lambda_{\g}(\mathfrak{p})\log\norm(\mathfrak{p})\frac{\widehat{\phi}\left(\frac{\log\norm(\mathfrak{p})}{\log R}\right)}{\norm(\mathfrak{p})^{\frac{1}{2}}}
\\
&\times\sum_{\substack{ \mathfrak{l}\m= \n\\ \norm(\mathfrak{l})\le X}}\mu(\mathfrak{l})\norm(\m)\sum_{\substack{(\ell,\m)=1\\ \norm(\ell)\le Y}}\frac{1}{\norm(\ell)}
 \sum_{\substack{\mathfrak{c}^{2}\sim\ell^2\mathfrak{p}\\c\in\c^{-1}\mathfrak{m}\backslash\{0\}\\\epsilon\in\mathcal{O}_{F}^{\times+}/\mathcal{O}_{F}^{\times2}}}\frac{\mathrm{Kl}(\epsilon,\ell^2; 1,\mathfrak{p};c,\c)}{\norm(c\c)}J_{k-{1}}\Bigg(\frac{4\pi\sqrt{{\epsilon}{\left[\ell^2\mathfrak{p}\mathfrak{c}^{-2}\right]}}}{|{c}|}\Bigg),
 \end{align*}
which by \eqref{J-Bessel1-bd} and \eqref{Weil-bd}, is
  \begin{align*} 
 &\ll \sum_{\mathfrak{p}\nmid \n\n'}|\lambda_{\g}(\p)| \log\norm(\mathfrak{p})\widehat{\phi}\left(\frac{\log\norm(\mathfrak{p})}{\log R}\right)\sum_{\substack{ \mathfrak{l}\m=\n\\ \norm(\mathfrak{l})\le X}}\norm(\mathfrak{m})^{-\frac12+\delta} Y\norm(k)^{\delta-\frac13}\\
&\le  
\sum_{\mathfrak{p}\nmid \n\n'}2\log\norm(\mathfrak{p})\widehat{\phi}\left(\frac{\log\norm(\mathfrak{p})}{\log R}\right)
\norm(\n)^{-\frac12+\delta+\epsilon}X^{\frac12-\delta}
Y\norm(k)^{\delta-\frac13}
\\
&\ll X^{\frac12-\delta}YR^u\norm(\n)^{-\frac12+\delta+\epsilon}\norm(k)^{\delta-\frac13},
\end{align*}
where in the second estimate, we used the bound $|\lambda_{\g}(\p)|\le 2$.

Now, we choose  $X=Y^{\frac12}=(\norm(\n\n')\norm(kk'))^{\eta}$ for sufficiently small $\eta>0$, we get
$X^{\frac12-\delta}Y=(\norm(\n\n')\norm(kk'))^{\eta(\frac52-\frac{\delta}{2})}$. It follows that
 \begin{align*}
 \frac{2}{\log R} \sum_{\f\in\Pi_{k}(\n)}S(\f\times\g;\phi)
\ll \frac{1}{\log R} (\norm(\n\n')\norm(kk'))^{\eta(\frac52-\frac{\delta}{2})} R^u \norm(\n)^{-\frac12+\delta+\epsilon}\norm(k)^{\delta-\frac13}
+o(\norm(\n)\norm(k)).
\end{align*}
Hence, choosing $R=  \norm(\n\n')^{2}\prod_{j=1}^{{n}}(|k_j-k_j'|+1)^2(k_j+k_j')^2$, we obtain
 \begin{align*}
 \frac{2}{\log R} \sum_{\f\in\Pi_{k}(\n)}S(\f\times\g;\phi)
=o(\norm(\n)\norm(k))
\end{align*}
provided that
\[u\leq \frac{(\frac32-\delta-\epsilon)\log(\norm(\n)\norm(k))-\frac16 \log\norm(k)-\eta(\frac52- \frac{\delta}{2})\log(\norm(\n\n')\norm(kk'))}{\log\left(\norm(\n\n')^{2}\norm((|k-k'|+1)(k+k'))^2\right)}.
\]


\section{Proof of Theorem \ref{thm:main3}}

In this section, we shall work with 
$
\phi(x) = \varphi_u(x) = ( \frac{\sin (\pi ux)}{\pi ux}  )^2
$
such that $\varphi_u(0) =1$ and $\varphi_u(x)\ge 0 $. We recall that
\begin{equation*}
\widehat{\varphi_u}(t) 
=
\begin{cases}
 \frac{u-|t|}{u^2 } & \text{if $|t|\le u$}; \\
 0 & \text{otherwise},
   \end{cases}
\end{equation*}
is  supported on $(-u, u)$. 

To prove  Theorem \ref{thm:main3}(i),  we observe that 
\begin{align*}
  \sum_{m=1}^\infty m \mathcal{P}_m(\n)
\le   \frac{1}{|\Pi_{k}(\n)|} \sum_{\f \in \Pi_{k}(\n)}\sum_{\rho}{\varphi_u}(\frac{\gamma}{2\pi}\log R)
=   \frac{1}{|\Pi_{k}(\n)|}\sum_{\f \in \Pi_{k}(\n)} D(\f;\varphi_u)
,
\end{align*}
where the sum $\sum_{\rho}$ is over the zeros $\rho =\frac{1}{2}+i\gamma$ of $L(s,\f)$, counted with multiplicity. Consequently, we deduce the desired estimate \eqref{upper-bd-mPm} from Theorem  \ref{thm:main1} under GRH.


To derive Theorem \ref{thm:main3}(ii), we observe that $\mathcal{Q}_m(\n) =0$ for every odd $m$ (as the global root number in the functional equation of $L(s,\f\times\g)$ is always 1). Thus, we have
\begin{align*}
 \sum_{m=1}^\infty \mathcal{Q}_m(\n) 
  =   \sum_{n=1}^\infty  \mathcal{Q}_{2n}(\n)
 \le  \frac{1}{2}  \sum_{n=1}^\infty 2n \mathcal{Q}_{2n}(\n) 
 =  \frac{1}{2}  \sum_{m=1}^\infty m \mathcal{Q}_{m}(\n), 
\end{align*}
which yields
\begin{align*}
 \sum_{m=1}^\infty \mathcal{Q}_m(\n) 
\le \frac{1}{2}  \frac{1}{|\Pi_{k}(\n)|} \sum_{\f \in \Pi_{k}(\n)}\sum_{\rho}{\varphi_u}(\frac{\gamma}{2\pi}\log R)
=\frac{1}{2}\frac{1}{|\Pi_{k}(\n)|}  \sum_{\f \in \Pi_{k}(\n)} D(\f\times\g;\varphi_u),
\end{align*}
where the sum $\sum_{\rho}$ is over the zeros $\rho =\frac{1}{2}+i\gamma$ of $L(s,\f\times\g)$, counted with multiplicity. Hence, by Theorem \ref{thm:main2}, we obtain the claimed upper bound. On the other hand, as  $\sum_{m\ge 0}\mathcal{Q}_m(\n) =1$, under GRH, it follows from the discussion in Section \ref{1LD-R-S} that
\begin{equation*}
\mathcal{Q}_0(\n) 
=1-  \sum_{m=1}^\infty \mathcal{Q}_m(\n) 
\ge 1-   \frac{1}{2} \frac{1}{|\Pi_{k}(\n)|} \sum_{\f \in \Pi_{k}(\n)} D(\f\times\g;\varphi_u)
\end{equation*}
which is asymptotic to
$$  1- \frac{1}{2} \Bigg( \widehat{\varphi_u}(0) - \frac{1}{2 }    \varphi_u(0) \cdot  \frac{1}{|\Pi_{k}(\n)|} \sum_{\f\in\Pi_{k}(\n)}  \delta_{\f\times \g}  +o(1) \Bigg)
\ge 1 - \frac{1}{2u} +\frac{1}{4} +o(1),
$$
as $\norm(k)\norm(\n)\rightarrow \infty$, where we used the fact that $\delta_{\f\times \g}\ge 1$ (see Theorem \ref{delta_fg}).

\begin{remark} Analogously, as  $\sum_{m\ge 0}\mathcal{P}_m(\n) =1$, we have
$$
\mathcal{P}_0(\n) 
=1-  \sum_{m=1}^\infty \mathcal{P}_m(\n) 
\ge 1-   \frac{1}{|\Pi_{k}(\n)|} D_{k,\n}(\varphi_u)
\ge  1- \widehat{\varphi_u}(0)  -\frac{1}{2 }\varphi_u(0) +o(1)
= 1 - \frac{1}{u} -\frac{1}{2} +o(1)
$$
as $\norm(k)\norm(\n)\rightarrow \infty$. However, unfortunately, the non-negativity of the last term would require $u> 2$, which seems beyond the current techniques .Nevertheless, one can further split the family in accordance of the global root numbers being $1$ or $-1$ (so the corresponding symmetric type will be $\mathrm{SO(even)}$ and $\mathrm{SO(odd))}$, respectively) to establish desired non-vanishing results.
\end{remark}


\bibliographystyle{siam}
\bibliography{references}

\end{document}